\documentclass[11pt,twoside]{amsart}
\usepackage{amssymb}

\usepackage[latin1]{inputenc}
\usepackage[T1]{fontenc}
\usepackage{mathtools}
\usepackage{graphicx}
\usepackage{tikz}
\usepackage{mathabx}
\usetikzlibrary{chains}
\usepackage[OT2,T1]{fontenc}

\usepackage{subcaption}

\usepackage{caption}
\usepackage{float}
\usepackage{mathrsfs}
\usepackage{booktabs}

\PassOptionsToPackage{pdfusetitle,pagebackref,colorlinks}{hyperref}
\usepackage{bookmark}
\hypersetup{
  linkcolor={red!70!black},
  citecolor={green!70!black},
  urlcolor={blue!80!black}
}

\usepackage[latin1]{inputenc}
\usepackage{amsmath}
\usepackage{amsthm}

\usepackage[all]{xy}
\usepackage{hyperref}
\newtheorem{theo}{Theorem}[section]

\newtheorem{prop}[theo]{Proposition}

\newtheorem{lem}[theo]{Lemma}

\numberwithin{equation}{section}

\theoremstyle{definition}
\newtheorem{defi}[theo]{Definition}
\newtheorem{rema}[theo]{Remark}
\newtheorem{examp}[theo]{Example}
\newtheorem{question}[theo]{Question}

\DeclareFontFamily{U}{mathc}{}
\DeclareFontShape{U}{mathc}{m}{it}%
{<->s*[1.03] mathc10}{}
\DeclareMathAlphabet{\mathcal}{U}{mathc}{m}{it}

\newcommand{\Aut}{{\rm Aut}}

\newcommand{\Pic}{{\rm Pic}}

\newcommand{\id}{{\rm id}}

\newcommand{\cal}{\mathcal}
\newcommand{\cA}{{\cal A}}

\newcommand{\cO}{{\cal O}}
\newcommand{\cP}{{\cal P}}

\newcommand{\bZ}{\mathbb{Z}}

\newcommand{\bR}{\mathbb{R}}
\newcommand{\bC}{\mathbb{C}}

\newcommand{\bF}{\mathbb{F}}

\newcommand{\bP}{\mathbb{P}}

\newcommand{\sE}{\mathscr{E}}

\newcommand{\sL}{\mathscr{L}}

\newcommand{\fS}{\mathfrak{S}}

\DeclareSymbolFont{cyrletters}{OT2}{wncyr}{m}{n}
\DeclareMathSymbol{\Sha}{\mathalpha}{cyrletters}{"58}

\renewcommand{\to}{\xymatrix@1@=15pt{\ar[r]&}}
\newcommand{\lto}{\xymatrix@1@=15pt{&\ar[l]}}
\renewcommand{\rightarrow}{\xymatrix@1@=15pt{\ar[r]&}}
\renewcommand{\mapsto}{\xymatrix@1@=15pt{\ar@{|->}[r]&}}
\newcommand{\mapslto}{\xymatrix@1@=15pt{&\ar@{|->}[l]&}}
\renewcommand{\twoheadrightarrow}{\xymatrix@1@=18pt{\ar@{->>}[r]&}}

\renewcommand{\hookrightarrow}{\xymatrix@1@=15pt{\ar@{^(->}[r]&}}
\newcommand{\hook}{\xymatrix@1@=15pt{\ar@{^(->}[r]&}}

\newcommand{\congpf}{\xymatrix@1@=15pt{\ar[r]^-\sim&}}
\renewcommand{\cong}{\simeq}

\newcommand{\TBC}[1]{}
\newcommand{\MD}[1]{}
\newcommand{\EV}[1]{}

\makeatletter
\def\blfootnote{\xdef\@thefnmark{}\@footnotetext}
\makeatother

\begin{document}

\title[On the inverse Galois problem for del Pezzo surfaces  of  degree 1]{On the inverse Galois problem for del Pezzo surfaces  of degree 1}

\author[L.\ Karras]{Luke Karras}

\address{Mathematisches Institut,
Universit{\"a}t Bonn, Endenicher Allee 60, 53115 Bonn, Germany}
\email{Luke.Karras0211@gmail.com}

\begin{abstract} \vspace{-2mm}  We solve the inverse Galois problem for del Pezzo surfaces of degree 1 over finite fields completely for 85 of the 112 possible types. We also determine for all 112 types the smallest field of existence. As an aside, we provide an example of a del Pezzo surface of degree 1 in characteristic 2 with more than one generalized Eckardt point. \end{abstract}

\maketitle

\tableofcontents

\section{Introduction}\label{sec: intro}

Let $X$ be a smooth projective geometrically integral surface over a field $k$. Then $X$ is called a del Pezzo surface if the anticanonical sheaf $\omega_X^{-1}$ is ample. The self-intersection $d=(\omega_X,\omega_X)$ is called the degree of $X$, where one always has $1 \leq d \leq 9$. Del Pezzo surfaces of degree 3 coincide with smooth cubic surfaces. 

Let $k$ be a perfect field, fix an algebraic closure $\overline{k}$ of $k$, and let $X$ be a del Pezzo surface of degree $d \leq 6$ over $k$. Manin \cite[Chapter IV]{manincubic} showed that the group of automorphisms of $\Pic(X \otimes \overline{k})$ preserving the intersection form and the anticanonical sheaf is isomorphic to the Weyl group of a certain root system, denoted by $E_{9-d}$. For $d \leq 3$ these are the exceptional root systems $E_8$, $E_7$ and $E_6$. The absolute Galois group $G$ of $k$ acts on $\Pic(X\otimes \overline{k})$, and also preserves the intersection form and the anticanonical sheaf. The image of $G$ in $W(E_{9-d})$ is well-defined up to conjugation. One may now ask the following question.

\begin{question}\label{question 1}
    For a given conjugacy class $C$ in $W(E_{9-d})$, does there exists a del Pezzo surface $X$ over $k$ such that the image of $G$ in $W(E_{9-d})$ belongs to $C$?
\end{question}

This question is called the \em inverse Galois problem for del Pezzo surfaces\em, and was formulated by Manin \cite[Chapter IV]{manincubic} for the special case of smooth cubic surfaces. Before we discuss the state of knowledge of this problem, let us mention two related questions. 

A theorem due to Weil \cite[Theorem 23.1]{manincubic} states that the number $N$ of $\bF_q$-rational points on a smooth projective geometrically rational surface $X$ over $\bF_q$ is given by $N=q^2+aq+1$, where $a$ is the trace of the induced action of the Frobenius automorphism on the geometric Picard group. Fixing the finite field $\bF_q$ and the degree $d$ of a del Pezzo surface, one may ask a similar inverse problem as above: Consider the possible values for the trace of an element in $W(E_{9-d})$. Which of these values are actually realized by a del Pezzo surface of degree $d$ over $\bF_q$? This version of the problem, specifically asked for cubic surfaces, is due to Serre \cite[Chapter 2.3.3]{serrelecturesNXp}.

Another kind of problem that may be asked is the so-called inverse line problem: Fix a perfect field $k$ and the degree $d$. What is the possible number of ($-1$)-curves, which are actual lines in case the degree is $\geq 3$, on a del Pezzo surface of degree $d$ over $k$ that are defined over $k$? 

We now discuss the known results of these three problems, restricting to the case where the ground field $k$ is a finite field. Note that in this case, we only have to ask for the cyclic conjugacy classes in Question \ref{question 1}.  

The first to make a contribution to Serre's question was Swinnerton-Dyer \cite{swinnerton}. Banwait, Fit\'e, and Loughran solved in \cite{Loughran19} Serre's question for all finite fields and all degrees. Additionally, they proved that for $q \gg 0$ every subgroup of $W(E_{9-d})$ is realized by some del Pezzo surface of degree $d$. Whereas the former result also heavily relies on computer verifications, the latter result is not effective, and uses quite abstract machinery. In \cite{trepalin20} and \cite{LoughranTrepalin20} Trepalin and Loughran solved the inverse Galois problem over finite fields completely in case the degree is $\geq 2$. Only a few months ago, Kaya, McKean, Streeter, and Uppal in \cite{kaya2025inversecurveproblemsdel} solved the inverse line problem for del Pezzo surfaces of arbitrary degree not only over finite fields. However, the inverse Galois problem for del Pezzo surfaces of degree 1 over finite fields is still an open problem. 

Let $X$ be a del Pezzo surface of degree 1 over a finite field $\bF_q$. The group of automorphisms of $\Pic(X \otimes \overline{\bF_q})$ preserving the intersection form and the anticanonical sheaf is isomorphic to $W(E_8)$. This group has, due to Carter \cite{Carter}, 112 cyclic conjugacy classes. We solved Question \ref{question 1} for 85 of the 112 classes. We numbered the 112 classes following Carter and say that a del Pezzo surface realizing such a class is of type $n$, where $n$ is the number of the class. We say a type exists over a finite field $\bF_q$ if a del Pezzo surface of this type exists. Now, the main theorem can be stated as follows:

\begin{theo}\label{main theo 1}
    The following holds for the types of del Pezzo surfaces of degree 1 over finite fields:
    \begin{enumerate}
        \item Types $1$ and $83$ exist over $\bF_q$ if and only if $q=16$ or $q\geq 19$.
        \item Types $2$ and $52$ exist over $\bF_q$ if and only if $q \geq 11$. 
        \item Type $8$ exists over $\bF_q$ if and only if $q \geq 8$.
        \item Types $3$, $4$, $5$, $16$, $28$, and $90$ exist over $\bF_q$ if and only if $q \geq 7$.
        \item Types $7$, $19$, and $54$ exist over $\bF_q$ if and only if $q \geq 5$.
        \item Types $6$, $9$, $10$, $14$, $15$, $29$, $31$, $34$, $41$, $64$, $85$, and $103$ exist over $\bF_q$ if and only if $q \geq 4$.
        \item Types $11$, $12$, $13$, $17$, $18$, $20$, $21$, $23$, $24$, $25$, $26$, $32$, $33$, $38$, $45$, $49$, $51$, $53$, $56$, $59$, $60$, $62$, $65$, $67$, $71$, $74$, $82$, $87$, $91$, $94$, and $102$ exist over $\bF_q$ if and only if $q \geq 3$.
        \item Types $22$, $27$, $30$, $35$, $36$, $37$, $39$, $40$, $44$, $46$, $47$, $48$, $50$, $55$, $57$, $61$, $63$, $70$, $72$, $73$, $78$, $79$, $80$, $81$, $88$, $95$, $97$, and $101$ exist over $\bF_q$ for all $q$. 
    \end{enumerate}
\end{theo}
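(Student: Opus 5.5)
Every item of the theorem has the same structure: a lower bound $q \geq q_0$ for existence, and explicit constructions over the fields allowed. I would split the proof accordingly. The nonexistence half comes from point counting. The existence half comes from explicit surfaces, together with a mechanism that propagates existence from small fields to all larger ones.

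\textbf{Nonexistence.} For a type $n$ with Frobenius class $C_n$ in $W(E_8)$, and every $r\ge1$, Weil's formula \cite[Theorem 23.1]{manincubic} forces
\[
\#X(\bF_{q^r}) = q^{2r} + \operatorname{tr}(C_n^r)\, q^r + 1 .
\]
\begin{itemize}
\item If some $r$ makes this number negative, or exceed the maximal possible number of points, the type cannot occur over $\bF_q$.
\item More refined obstructions use the anticanonical model. $X$ is a sextic in $\bP(1,1,2,3)$, and $|-K_X|$ is a pencil of genus $1$ curves with a single base point, which is always rational. Hence the $\bF_q$-points of $X$ other than the base point are distributed over the $q+1$ rational members of the pencil.
\item Each member is either a smooth genus $1$ curve, with at most $q+1+2\sqrt q$ points by Hasse--Weil, or a singular member of bounded type.
\end{itemize}
Combining these counts with the count of rational $(-1)$-curves read off from the action of $C_n$ on the $240$ exceptional classes should rule out the small $q$ below each threshold. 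Types $1$ and $83$ are the extreme cases: $1$ is the identity, with $q^2+9q+1$ points and all $240$ curves defined over $\bF_q$, and $83$ is presumably its counterpart under multiplication by $-1$, with $q^2-7q+1$ points. These need the sharpest bounds, and possibly an exhaustive search over $q\le 17$, $q\neq 16$.

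\textbf{Existence for large $q$.} For the split type $1$, I would blow up $8$ points in general position in $\bP^2(\bF_q)$; this requires no three collinear, no six on a conic, and no eight on a cuspidal cubic singular at one of them. Similarly, types $n$ coming from Galois orbits of points in general position in $\bP^2$, or from lower-degree del Pezzo surfaces whose types are known by \cite{trepalin20,LoughranTrepalin20}, can be built by blowing up appropriate closed points. Type $83$ arises from type $1$ by the Bertini twist, i.e.\ the twist by the Bertini involution, which acts as $-1$ on $K_X^\perp$; likewise each class $C$ and $-C$ are exchanged. This explains the pairing of thresholds in the list, for example $2\leftrightarrow 52$ and $1\leftrightarrow 83$, and halves the work.

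To pass from $q_0$ to all $q\ge q_0$, I would prove a counting estimate: the number of admissible configurations of closed points of the required degrees grows like $q^{16}$, while the excluded loci have lower dimension. This gives explicit existence for all $q$ beyond some bound $B_n$. The remaining range $q_0\le q<B_n$ is then settled by computer search, which also supplies the equations in $\bP(1,1,2,3)$ realizing each type at $q_0$. Types in item (8) need constructions even over $\bF_2$ and $\bF_3$, including characteristic $2$ equations, which is where the odd example with several generalized Eckardt points naturally shows up.

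\textbf{Main obstacle.} I expect the hardest step to be the sharp nonexistence at the boundary, notably showing types $1$ and $83$ fail for $q=17$ and $q\le 13$ while existing for $q=16$. Simple point counting will not suffice there. I would first try an exhaustive enumeration of eight-point configurations in general position in $\bP^2(\bF_q)$ up to $PGL_3(\bF_q)$, using orbit reductions to make it finite and feasible.
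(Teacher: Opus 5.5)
\textbf{Nonexistence.} The gap is in the nonexistence half. Every threshold in the theorem is sharp, so this half carries most of the content. The constraints you propose are Weil's formula over $\bF_{q^r}$, Hasse--Weil on the $q+1$ rational members of $|-K_X|$, and the number of rational $(-1)$-curves. These are only necessary conditions, and many excluded cases satisfy them, so ``should rule out the small $q$'' cannot be turned into a proof.

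Take type $8$ over $\bF_7$. Such a surface would have $57$ rational points and $24$ rational $(-1)$-curves with $8$ points each.
\begin{itemize}
\item The pencil allows up to $8\cdot 12+1=97$ points.
\item The incidence count, with at most $10$ concurrent curves, only requires $192\le 570$.
\item Over $\bF_{49}$ the counts are those of a split surface, which exists.
\end{itemize}
Nothing is contradicted, yet type $8$ does not exist over $\bF_7$. The paper proves this by a complete loop over $136\cdot 7^7$ normal forms of sextics in $\bP(1,1,2,3)$. Likewise, type $2$ over $\bF_9$ ($145$ points, $126$ rational curves) passes the same tests; the paper excludes it by exhaustive search over plane configurations.

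\textbf{What is missing.}
\begin{enumerate}
\item For every type with a rational $(-1)$-curve, contracting it (Castelnuovo) gives a degree $2$ surface whose type is fixed by the Carter symbol. $H^1(G,\Pic(\overline{X}))$ separates cases such as $(A_1^4)'$ and $(A_1^4)''$. So nonexistence of the degree $2$ type, known from Trepalin and Loughran--Trepalin, forces nonexistence in degree $1$. You use degree $2$ surfaces only for existence.
\item For the remaining small $q$, exhaustive searches are needed. One search runs over Galois-orbit configurations in $\bP^2$ for types that are blow-ups of the plane. The other runs over normal forms of sextics over $\bF_2,\bF_3,\bF_4,\bF_7$, using an invariant computable from an equation (the Orbit--Trace sequence) to identify the type.
\item For type $1$ at $q=9,11,13,17$, the paper uses the fullness results of Knecht--Reyes and Kaplan/Kuwata. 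Here your eight-point enumeration would be a legitimate, though heavier, substitute, and your Bertini pairing then settles type $83$.
\end{enumerate}

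\textbf{Existence.} Your $q^{16}$ configuration count only reaches blow-ups of $\bP^2$. Types such as $8, 19, 34, 38, 62$ have $H^1(G,\Pic(\overline{X}))\neq 0$, a birational invariant, so they are not of this form. They must be built by blowing up a rational point of a degree $2$ surface. That point must avoid both the $(-1)$-curves and the ramification curve of the double cover $X\to\bP^2$. This needs a uniform bound comparing $q^2+aq+1$ with $n(q+1)+m+q+1+6\sqrt q$ for odd $q$, respectively $n(q+1)+m+2(q+1)$ for even $q$.

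Finally, general position must exclude eight points on any cubic singular at one of them, nodal or cuspidal, not only cuspidal ones.
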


For the remaining 27 types, we can at least determine the smallest field of existence. This in turn yields a reasonable conjecture for the whole problem, since in almost all cases a type existing over $\bF_q$ also exists over $\bF_{q'}$ for all $q' \geq q$. 

\begin{theo}\label{main theo 2}
    The following holds for the types of a del Pezzo surface of degree 1 over finite fields:
    \begin{enumerate}
        \item Types $42$, $58$, $68$, $69$, $75$, $76$, $77$, $86$, $89$, $96$, $98$, $99$, $104$, $105$, $106$, $107$, $108$, $109$, $110$, and $111$ exist over $\bF_2$.
        \item Types $43$, $66$, $93$, and $100$ exist over $\bF_3$ but not over $\bF_2$.
        \item Types $84$, $92$, and $112$ exist over $\bF_4$ but not over $\bF_2$ and $\bF_3$.
    \end{enumerate}
\end{theo}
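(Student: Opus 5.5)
The statement has two halves for each type: existence over the stated smallest field, and non-existence over all smaller fields. I would handle them separately. For existence, I would use the classical model of a del Pezzo surface of degree $1$ as a smooth sextic $w^2 + a_1(x,y)\,w + a_3(x,y,z) = 0$ in $\bP(1,1,2,3)$ (in characteristic $2$ one must keep the general form $w^2 + a_1 w z + \dots$). Over $\bF_2$, $\bF_3$ and $\bF_4$ the space of such equations is finite, so I would do a computer enumeration. For each candidate:
\begin{enumerate}
\item check smoothness;
\item compute the point counts $\#X(\bF_{q^r})$ for $r=1,\dots,r_0$;
\item from these, obtain the traces of $F^r$ on $\Pic(X\otimes\overline{\bF_q})$ via Weil's formula $\#X(\bF_{q^r}) = q^{2r} + \mathrm{tr}(F^r)\,q^r + 1$;
\item compute the characteristic polynomial of $F$ on the $8$-dimensional lattice $K_X^\perp$.
\end{enumerate}
Carter's table lists the characteristic polynomials of the $112$ classes. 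When the polynomial does not separate classes, I would instead read off the Galois action on the $240$ exceptional curves, computed as the orbit structure of the Frobenius on the $(-1)$-curves, or the action on the root system. This distinguishes any remaining pairs.

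An alternative, more conceptual, existence route is to blow up $8$ points in general position on $\bP^2$ (or a degree-$1$ surface obtained from a conic bundle or from a degree-$2$ surface) whose Frobenius orbit structure is prescribed. Carter classes of $W(E_8)$ that come from $W(E_7)$ or $W(A_8)$ correspond to blowing up a rational point on a degree-$2$ surface, or $8$ points in $\bP^2$ with given orbit type. Trepalin and Loughran's degree-$\geq 2$ results then supply such base surfaces. In practice I would use this only to guide the search and rely on explicit equations to certify each type.

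For the nonexistence parts (types in (2) over $\bF_2$, types in (3) over $\bF_2$ and $\bF_3$), I would argue as follows.
\begin{enumerate}
\item Compute the point counts forced by the class, namely $\#X(\bF_{q^r}) = q^{2r} + \mathrm{tr}(w^r)\,q^r + 1$.
\item Find some $r$ where this contradicts a geometric bound. Possible bounds:
\begin{enumerate}
\item nonnegativity, or the anticanonical base point always being rational, so $\#X(\bF_q)\geq 1$;
\item the count of points on the double cover of the quadric cone $\bP(1,1,2)$, which is at most $2\cdot\#\bP(1,1,2)(\bF_q) = 2(q^2+q+1)$ on points with $z$ finite;
\item the number of rational points lying on fibres of the elliptic pencil $|-K_X|$. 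Since $X$ blown up at the base point is a rational elliptic surface over $\bP^1$, its point count is $\sum_t \#E_t(\bF_q) - q$. Each $\#E_t$ is constrained by Hasse, and the singular fibres, at most $12$ counted with Euler number, are constrained by Kodaira types.
\end{enumerate}
\end{enumerate}
If no such counting bound works, the fallback is the exhaustive enumeration itself: over $\bF_2$ and $\bF_3$ the equation space modulo $\Aut\bP(1,1,2,3)$ is small enough to list all smooth surfaces and verify that the missing classes never occur.

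The main obstacle is this nonexistence direction. Counting arguments only see characteristic polynomials (indeed only traces of powers), so two classes with the same polynomial cannot be separated by counting. For these I expect to need the full enumeration up to isomorphism, with an exact determination of the Frobenius action on the $240$ curves. That determination has to be done carefully in characteristic $2$, where generalized Eckardt points and wild behaviour of the bicanonical double cover make the Weierstrass-type normal forms less uniform.
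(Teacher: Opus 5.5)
Your plan is essentially the paper's proof. The paper takes explicit smooth sextics in $\bP_{\bF_q}(1,1,2,3)$ and randomly searches them for examples. For the non-existence claims it exhaustively enumerates normal forms over $\bF_2$ and $\bF_3$, reading off the type from point counts over $\bF_{q^n}$ together with the lengths of the Frobenius orbits on the $240$ $(-1)$-curves. The paper finds these curves concretely as $V(z-Q(x,y),w-C(x,y))$. Your counting bounds are only a harmless pre-filter: they exclude types $84$ ($\#X(\bF_2)=-1$), $112$ ($\#X(\bF_2)=15$) and $92$ ($\#X(\bF_4)=-11$) over $\bF_2$, so every other non-existence claim rests on the enumeration, as you anticipate.
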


\begin{rema}\label{rema eckardt}
    Additionally, we found an example of a del Pezzo surface of degree 1 in characteristic 2 with 15 generalized Eckardt points, cf.\ \S\! \ref{sec: alg clos}. Classically, Eckardt points are studied on cubic surfaces and are defined as points where three lines intersect, which is the maximal number of lines intersecting in one point on a smooth cubic surface. For a modern treatment, we refer to \cite{dolgachevautocubicodd} and \cite[Chapter 9]{dolgachevclassical}. On a degree 2 del Pezzo surface, the maximal number of ($-1$)-curves intersecting in one point is 4. On a degree 1 del Pezzo surface, this number depends on the characteristic of the ground field and is 16 in characteristic 2, 12 in characteristic 3 and 10 in all other characteristics. A point on a degree 1 or 2 del Pezzo surface where the maximal possible number of ($-1$)-curves intersect is called a generalized Eckardt point. According to \cite{wintergeneralized}, there are only very few examples of degree 1 del Pezzo surfaces with a generalized Eckardt point, and all these surfaces contain exactly one such generalized Eckardt point. Quite recently, Martin and Wagener found an example of a del Pezzo surface of degree 1 in characteristic 5 with 144 generalized Eckardt points, cf.\ \cite[Remark 1.6]{martinfnonsplit}. It seems that no other examples of del Pezzo surfaces of degree 1 with more than one generalized Eckardt point are known.
\end{rema}

\begin{rema}\label{rema trip}
    Very recently and independently, Trip \cite{trip} submitted a paper to the arXiv regarding the inverse Galois problem for del Pezzo surfaces over finite fields. While she focuses on surfaces with conic bundle structure and minimal del Pezzo surfaces, our aim is to solve the non-minimal cases as much as possible. Let us explain the differences and similarities in more detail:

    We also provide a proof of \cite[Theorem 1.3]{trip} using the algorithmic approach for points in general position in $\bP^2_{\bF_q}$, which was already used in \cite{Loughran19}, and rational points outside the so-called bad locus, cf.\ \cite[\S\! 5]{trip}, \S\! \ref{sec: lower bound}, and \S\! \ref{sec: blowups}.  We give a solution to the one type of index 8 not treated in \cite[Theorem 1.3]{trip}, see \S\! \ref{sec: blowups}. We do not use Magma for our results in \S\! \ref{sec: lower bound}. Instead, we only use the Carter symbol and, in case the Carter symbol does not distinguish the types, a certain cohomology group. Our results also fill in the gaps of \cite[Theorem 1.1]{trip}. 
    
    It is also important to note that our numbering is different from that one in \cite{trip}. We follow \cite{Carter}, whereas Trip follows \cite{Urabe}.   
\end{rema}

The paper is structured as follows: In \S\! \ref{sec: foundations} we recall the theory of del Pezzo surfaces that is necessary for our purposes. In \S\! \ref{sec: alg clos}, we study del Pezzo surfaces primarily over algebraically closed fields, their blow-up model, and the connection to root systems. At the end of \S\! \ref{sec: alg clos}, we briefly discuss generalized Eckardt points. In the next Section, we study the case where the ground field is non-closed and, in particular, the interplay of the action of the Galois group on the geometric Picard group and the root system. Finally, in \S\! \ref{sec: antican systems}, the anticanonical systems are under consideration. 

In \S\! \ref{sec: igp}, we prove Theorem \ref{main theo 1} and \ref{main theo 2}. We outline the proof strategy in \S\! \ref{sec: pf struc}. The proof is then outlined in \S\! \ref{sec: lower bound} to \S\! \ref{sec: sextics}. In the last section, we briefly discuss the remaining open cases. 

The appendix contains certain data associated with the groups $W(E_7)$ and $W(E_8)$ and certain normal forms for degree 1 del Pezzo surfaces used in \S\! \ref{sec: sextics}. The Magma code used for the computations in this paper, as well as the examples of del Pezzo surfaces used for the existence results can be found on the author's website \cite{Karras}. \medskip

\noindent
{\bf Acknowledgements:} Most results of this paper are based on my Master's Thesis, which was written at the University of Bonn. I heartily want to thank my advisor Gebhard Martin for selecting the topic, for his dedicated supervision, and numerous discussions. Furthermore, I am very grateful to Daniel Huybrechts for introducing me to Gebhard Martin, and for his lectures, which I was able to attend as a tutor and a student. I thank Michael Welter for providing access to the computer algebra system Magma. Last but not least, I would like to give special thanks to my great friends Christian Tack for proofreading my thesis, and David Wendler for many helpful comments and discussions regarding this work. 

\section{Foundations on del Pezzo surfaces}\label{sec: foundations}

In this section, we introduce the necessary theory for del Pezzo surfaces. For a general treatment of the topic, we refer to \cite[Chapter IV]{manincubic}, \cite{demazure}, \cite[Chapter III.3]{Kollar}, and \cite[Chapter 8]{dolgachevclassical}. Note that for the last reference, the usual assumption is that the ground field is $\bC$.

\subsection{Blow-up model, \texorpdfstring{$(-1)$}{-1}-curves, and root systems}\label{sec: alg clos}

After the definition of del Pezzo surfaces over arbitrary fields, this section focuses on del Pezzo surfaces over algebraically closed fields. We will describe such surfaces in terms of blow-ups of the projective plane and describe the connection between the Picard group of a del Pezzo surface and root systems. At the end of this section, we briefly discuss the notion of an Eckardt point and a generalized Eckardt point.

\begin{defi}\label{Definition k variety surface curve}
    Let $k$ be a field. A $k$-\emph{variety} (or variety over $k$) is a separated $k$-scheme of finite type. A \emph{surface} (resp. a \emph{curve}) over $k$ is a $k$-variety of dimension 2 (resp. 1). If $X$ is a variety over $k$, we denote by $\overline{X}$ its base change to an algebraic closure of $k$. A $k$-variety is called \emph{nice} if it is smooth projective and geometrically integral.   
\end{defi}

\begin{defi}\label{Definition del Pezzo surface}
    A \emph{del Pezzo surface} $X$ over a field $k$ is a nice surface over $k$ such that its anticanonical sheaf $\omega_X^{-1}$ is ample. The integer $(\omega_X,\omega_X)$ is called the \emph{degree} of $X$, where $(\cdot,\cdot)$ denotes the intersection form on $X$.    
\end{defi}

We next introduce a blow-up model for del Pezzo surfaces over algebraically closed fields. 

\begin{defi}\label{Definition general Position}
    Let $k$ be an algebraically closed field and $1\leq r\leq8$. We say that $r$ closed points $p_1,\dots,p_r \in \bP^2_k$ are \emph{in general position} if they satisfy the following conditions:
    \begin{enumerate}
        \item no three of them are collinear,
        \item no six of them lie on a conic,
        \item no eight of them lie on a cubic with a singularity at one of the points. 
    \end{enumerate}
\end{defi}

The following theorem is due to Manin \cite[Chapter IV, \S\! 24]{manincubic} and Demazure \cite[Surfaces de Del Pezzo - II, Th\'eor\`eme 1]{demazure}.  

\begin{theo}\label{Theorem del pezzo alg closed field points gen pos}
Let $k$ be an algebraically closed field and $X$ be a del Pezzo surface over $k$ of degree $d$. Then $1 \leq d \leq 9$ and $X$ is the blow-up of $9-d$ points in general position in $\bP^2_k$, except for $d=8$, where $X \cong \bP^1_k \times \bP^1_k$ is also possible. Conversely, any blow-up of $9-d$ points in general position in $\bP^2_k$ is a del Pezzo surface of degree $d$ over $k$.  
\end{theo}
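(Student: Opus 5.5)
The plan is to prove the two directions separately, using only standard surface theory over an algebraically closed field $k$.

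\emph{Direct direction.} Let $X$ be a del Pezzo surface of degree $d$ over $k$. Since $-K_X$ is ample, $h^0(K_X^{\otimes m})=0$ for all $m>0$, so every plurigenus vanishes. Kodaira vanishing fails in positive characteristic, so I would not rely on it. Instead, Riemann--Roch together with Serre duality and ampleness of $-K_X$ gives $h^2(\cO_X)=h^0(K_X)=0$. Castelnuovo's criterion (Zariski's version in positive characteristic) also needs $q=h^1(\cO_X)=0$. For this I would argue that a surface with $P_2=0$ and an ample anticanonical divisor is rational: run the MMP by contracting $(-1)$-curves, which keeps $-K$ big and nef and gives $\kappa=-\infty$. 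The Enriques classification then yields a ruled surface over a curve $B$. Ampleness of $-K$ on a ruled surface forces $B\cong\bP^1$, because the restriction of $-K$ to a section of minimal self-intersection must have positive degree. Hence $X$ is rational.

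Next I would take a birational morphism from $X$ to a minimal rational surface, which is $\bP^2$ or $\bF_n$. If $X$ is minimal, then $X=\bP^2$ (degree $9$) or $X=\bF_n$. The condition $-K\cdot C_0=2-n>0$ forces $n\le 1$, and $n=1$ is not minimal, so $X=\bP^1\times\bP^1$ (degree $8$). Otherwise $X$ is a sequence of blow-ups of points, possibly infinitely near. Every $(-1)$-curve $E$ satisfies $-K\cdot E=1$, and no irreducible curve $C$ with $C^2=-2$ can occur, since adjunction gives $K\cdot C=0$, contradicting ampleness. This rules out infinitely near points, because blowing up a point on an exceptional curve leaves the strict transform as a $(-2)$-curve. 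It also handles the case where $X$ dominates $\bF_n$ with $n\geq 1$: then $X$ also dominates $\bP^2$ or $\bP^1\times\bP^1$ blown up at one point, and the latter is $\bP^2$ blown up at two points. So $X$ is $\bP^2$ blown up at $r=9-d$ distinct points. Since $K_X^2=9-r>0$, we get $r\le 8$, hence $1\le d\le 9$.

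\emph{General position.} Each failure of general position produces an irreducible $(-2)$-curve or a curve with $K\cdot C\ge 0$:
\begin{itemize}
\item three collinear points give the strict transform $H-E_i-E_j-E_l$;
\item six points on a conic give $2H-\sum_{i=1}^{6}E_i$;
\item eight points on a cubic singular at one of them give $3H-2E_1-\sum_{i=2}^{8}E_i$.
\end{itemize}
Each of these classes $C$ has $C^2=-2$ and $K\cdot C=0$. A reducible member only gives worse components. The effective curve then meets $-K$ non-positively, which contradicts ampleness.

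\emph{Converse.} This is the main obstacle. Let $X$ be the blow-up of $r\le 8$ points in general position. I would apply the Nakai--Moishezon criterion: $(-K)^2=9-r>0$, so it suffices to show $-K\cdot C>0$ for every irreducible curve $C$. Write $C\sim aH-\sum b_iE_i$. If $C$ is exceptional, $-K\cdot C=1$. Otherwise, write $-K\cdot C=3a-\sum b_i$. To bound $\sum b_i$, I would first handle the case $r\le 7$. Then $|-K|$ is base point free, either by a dimension count or directly as the system of cubics through the points, and it contains a member avoiding a general point of $C$. This gives $-K\cdot C\ge 0$, with equality only if $C$ is a $(-2)$-curve. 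Adjunction and Riemann--Roch show that a curve with $K\cdot C=0$ and $p_a(C)\geq 0$ must have $C^2\ge -2$. Combined with the Hodge index theorem, $C^2<0$ forces $C^2=-2$. A finite enumeration of $(-2)$-classes $aH-\sum b_iE_i$ with $a\le 3$ (the roots of $E_r$) then shows they are exactly the three configurations excluded above.

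For $r=8$, $|-K|$ is a pencil with one base point. I would treat curves in the pencil directly, since they satisfy $-K\cdot C=1$. For any other irreducible $C$, intersecting with the pencil member through a point of $C$ not on it gives $-K\cdot C\ge 0$, and equality is then excluded by the same $(-2)$-root enumeration.
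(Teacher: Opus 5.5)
The paper does not prove this theorem; it quotes Manin and Demazure. Your outline therefore has to stand on its own.

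\textbf{Direct direction.} Its structure is the standard one and is essentially fine, apart from one slip in the rationality step. On a geometrically ruled surface over an elliptic curve with invariant $e=-1$, the minimal section satisfies $-K\cdot C_0=2-2g-e=1>0$. So positivity of $-K$ on $C_0$ does not force $B\cong\bP^1$.

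Use $K^2=8(1-g)>0$ instead. This is available because contracting a $(-1)$-curve $E$ preserves ampleness of $-K$: for a curve $C\neq E$ with image $C'$ one has $-K_{X'}\cdot C'=-K_X\cdot C+E\cdot C>0$, and $K_{X'}^2=K_X^2+1$. Hence the minimal model is again del Pezzo, and $K^2>0$ there gives $g=0$.

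\textbf{The gap in the converse.} It sits exactly at the step you call the main obstacle. You assert, without proof, that $|-K_X|$ is base point free for $r\le 7$ and is ``a pencil with one base point'' for $r=8$. Yet this is the only place where general position is used to get nefness of $-K_X$.
\begin{itemize}
\item A dimension count alone gives $h^0(-K_X)\ge 10-r$. It says nothing about base points or fixed components.
\item The claim really depends on the hypothesis. Four collinear points make the strict transform of their line a fixed component of $|-K_X|$, and $-K_X$ has degree $-1$ on it.
\item For $r=8$, take a component $C$ of a reducible member that misses the base point. It lies on no other member, so intersecting with ``the member through a point of $C$'' gives nothing.
\end{itemize}

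\textbf{How to close it.} For every $r\le 8$ you only need that no irreducible curve is a fixed component of $|-K_X|$. Then some member does not contain $C$, so $-K_X\cdot C\ge 0$. This follows from conditions (1)--(3), case by case:
\begin{itemize}
\item \emph{Fixed line $L$.} Every cubic through the points would be $L$ plus a conic through the $m\ge r-2$ points off $L$. Such conics form a vector space of dimension at most $\max(0,8-r)<10-r$. For $m\le 5$ this is because points with no three collinear impose independent conditions on conics; for $m\ge 6$ there are no such conics.
\item \emph{Fixed irreducible conic.} It is excluded the same way, using lines through the at least $r-5$ points off it.
\item \emph{Fixed irreducible cubic.} This contradicts $\dim|-K_X|\ge 1$.
\item \emph{$E_i$ fixed.} This means every cubic through the points is singular at $p_i$. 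For $r=8$ it contradicts (3). For $r\le 7$ it is refuted by one explicit cubic: a smooth conic through $p_i$ and $\min(r-1,4)$ further points, together with a line through the remaining (at most two) points that avoids $p_i$.
\end{itemize}
With this inserted, the rest of your converse goes through: Hodge index and adjunction force a $(-2)$-curve, and the enumeration of roots with $a\le 3$ rules these out.
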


Next, we cover $(-1)$-curves on del Pezzo surfaces and the relation between the intersection form and root systems. 

\begin{defi}\label{Definition -1 curve}
    Let $X$ be a nice surface over a field $k$. A smooth curve $C$ on $X$ (i.e., a smooth closed codimension 1 subscheme of $X$) is called a $(-1)$\emph{-curve} or \emph{exceptional curve} if $(C,C)=-1$ and $C \cong \bP^1_k$. 
\end{defi}

\begin{defi}\label{Definition N_r}
    For an integer $r \geq 1$, let $N_r = \bZ^{r+1}$ and let $L_0,\dots,L_r$ be a basis of $N_r$. Define $K_r = (-3,1,\dots,1)$ and equip $N_r$ with the bilinear form $N_r \times N_r \to \bZ$ given by the formulae
    \[(L_0,L_0)=1, \quad (L_i,L_i)=-1,\: \text{for}\: i\geq 1, \quad (L_i,L_j) =0 \: \text{for} \: i \ne j.\]
    Moreover, we define the following subsets:
    \[R_r = \{L \in N_r \mid (L,K_r)=0,(L,L)=-2\} \quad \text{and} \quad I_r=\{L\in N_r\mid(L,K_r)=(L,L)=-1\}. \]
    The elements of $I_r$ are called \emph{exceptional classes}. 
\end{defi}

The following proposition follows from standard results about monoidal transformations, cf.\ \cite[Chapter V.3]{HartshorneAG}. It can also be found in \cite[Chapter IV, \S\! 25]{manincubic}. 

\begin{prop}\label{Proposition Pic is N_r}
    Let $X$ be a del Pezzo surface of degree $d\leq 8$ over an algebraically closed field $k$ that is not isomorphic to $\bP^1_k \times\bP^1_k$. Set $r=9-d$. Then $\Pic(X)$ is free of rank $r+1$ and there exists a basis $\sL_0,\dots,\sL_r$ such that there is an isomorphism of lattices $\Pic(X) \cong N_r$ sending $\sL_i$ to $L_i$ and $\omega_X$ to $K_r$. 
\end{prop}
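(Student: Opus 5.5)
We use Theorem \ref{Theorem del pezzo alg closed field points gen pos}: since $X \not\cong \bP^1_k\times\bP^1_k$ and $d\leq 8$, there are $r=9-d\geq 1$ points $p_1,\dots,p_r\in\bP^2_k$ in general position such that $\pi\colon X\to\bP^2_k$ is the blow-up at them. We may write $\pi$ as a composition $X=X_r\to X_{r-1}\to\dots\to X_0=\bP^2_k$ of blow-ups of single closed points, where $X_i\to X_{i-1}$ blows up the preimage of $p_i$. Since the $p_i$ are distinct, this preimage is a single point not lying on earlier exceptional curves. The whole argument is an induction along this chain using the standard facts on monoidal transformations from \cite[Chapter V.3]{HartshorneAG}.

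For a single blow-up $f\colon \tilde Y\to Y$ of a smooth projective surface at a closed point with exceptional curve $E$, Hartshorne V.3.2 and V.3.3 give the following:
\begin{enumerate}
\item $\Pic(\tilde Y)\cong f^*\Pic(Y)\oplus\bZ E$;
\item $(f^*C,f^*D)=(C,D)$, $(f^*C,E)=0$ and $(E,E)=-1$;
\item $\omega_{\tilde Y}=f^*\omega_Y\otimes\cO(E)$, i.e.\ $K_{\tilde Y}=f^*K_Y+E$.
\end{enumerate}
We start from $\Pic(\bP^2_k)=\bZ\cdot[\cO(1)]$ with $(H,H)=1$ and $K_{\bP^2}=-3H$. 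Applying the three facts $r$ times, we set $\sL_0=\pi^*\cO(1)$ and let $\sL_i$ be the pullback to $X$ of the exceptional curve $E_i$ of $X_i\to X_{i-1}$.

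By induction:
\begin{itemize}
\item $\Pic(X)$ is free with basis $\sL_0,\dots,\sL_r$, so it has rank $r+1$.
\item The pairings are $(\sL_0,\sL_0)=1$, $(\sL_i,\sL_i)=-1$ for $i\geq1$, and $(\sL_i,\sL_j)=0$ for $i\neq j$. These hold because pullback preserves intersections and pullbacks are orthogonal to later exceptional curves.
\item $\omega_X=-3\sL_0+\sL_1+\dots+\sL_r$.
\end{itemize}
Hence the map sending $\sL_i\mapsto L_i$ is an isometry $\Pic(X)\cong N_r$ carrying $\omega_X$ to $K_r=(-3,1,\dots,1)$.

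The main point needing care is that the total transforms $\sL_i$ satisfy $(\sL_i,\sL_i)=-1$ and are mutually orthogonal. The only potential obstacle is blowing up a point on an earlier exceptional curve, which would make the strict and total transforms of $E_i$ differ. This is excluded here because the $p_i$ are distinct points of $\bP^2_k$. Even in the infinitely-near case, working with total transforms makes the computation go through verbatim, so the argument is robust.
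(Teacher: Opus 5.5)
Your proof is correct and takes essentially the paper's approach. The paper gives no separate proof; it cites the standard monoidal-transformation formulas (Hartshorne, Chapter V.3, and Manin, Chapter IV, \S 25) applied to the blow-up model of Theorem \ref{Theorem del pezzo alg closed field points gen pos}, and you carry out exactly that induction, correctly obtaining the orthogonal basis $\sL_0,\dots,\sL_r$ and $\omega_X=-3\sL_0+\sL_1+\dots+\sL_r\mapsto K_r$.
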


The next theorem relates the lattice $N_r$ to root systems. A proof can be found in \cite[Chapter IV, \S\! 25]{manincubic}. 

\begin{theo}\label{Theorem root systems and R_r}
    Let $V$ be the orthogonal complement of $K_r$ in $\bR \otimes N_r$ equipped with the induced bilinear form of $N_r$ with a negative sign. Then, for $r \leq 8$, the space $V$ is a Euclidean vector space of dimension $r$ and the vectors from the set $R_r$ form a root system of rank $r$ of $V$. Furthermore, for $3 \leq r \leq 8$, the root system $R_r$ is isomorphic to one of the following systems ordered by their ranks:
    \[A_1 \times A_2, A_4, D_5, E_6, E_7, E_8.\]
\end{theo}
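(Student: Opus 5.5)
The plan is to argue purely at the level of the lattice $N_r$, with Definition \ref{Definition N_r} and Proposition \ref{Proposition Pic is N_r} as the only inputs. First, $V$ is Euclidean of dimension $r$. Since $(K_r,K_r)=9-r>0$ for $r\le 8$, and $N_r\otimes\bR$ has signature $(1,r)$, the orthogonal complement $V=K_r^\perp$ is an $r$-dimensional subspace. The form restricted to $V$ is negative definite: if $(x,x)\ge 0$ with $x\perp K_r$ and $x\neq 0$, then $x$ together with $K_r$ would span a $2$-dimensional positive semidefinite subspace, contradicting signature $(1,r)$ (by the Hodge index type argument). Flipping the sign therefore gives a Euclidean space.

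Second, I verify the root system axioms for $R_r$ with respect to the negated form $\langle x,y\rangle=-(x,y)$. Each $\alpha\in R_r$ satisfies $\langle\alpha,\alpha\rangle=2$. Consider the reflection
\[
s_\alpha(x)=x+(x,\alpha)\alpha .
\]
It is integral, preserves the form, and fixes $K_r$ because $(K_r,\alpha)=0$. Hence $s_\alpha$ preserves $R_r$, and the Cartan integers $2\langle x,\alpha\rangle/\langle\alpha,\alpha\rangle=-(x,\alpha)$ lie in $\bZ$. Finiteness holds because $R_r$ consists of lattice points on a sphere in a definite lattice. The set $R_r$ is also symmetric under $\alpha\mapsto-\alpha$.

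The remaining axiom is that $R_r$ spans $V$. For this I use the explicit roots
\[
L_i-L_{i+1}\ (1\le i\le r-1),\qquad L_0-L_1-L_2-L_3\ (r\ge 3).
\]
These give $r$ linearly independent elements orthogonal to $K_r$. For $r\le 2$ one checks the rank directly: $r=2$ should give rank $1$ or be handled separately. The stated rank-$r$ claim is really intended for $r\ge 3$, and I would note that explicitly.

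Third, I identify the type by computing the intersection graph of the simple system above. The vectors $e_i=L_i-L_{i+1}$ form an $A_{r-1}$ chain. The vector $e_0=L_0-L_1-L_2-L_3$ is orthogonal to every $e_i$ except $e_3=L_3-L_4$, with which it has product $-(e_0,e_3)=-1$ in the Euclidean form, i.e. it is joined by a single edge. The resulting Dynkin diagram is a $T$-shape with arms of lengths $2$, $1$ and $r-4$ from the branch node $e_3$. Evaluating for each $r$:
\begin{itemize}
\item $r=3$ gives $A_2\times A_1$;
\item $r=4$ gives $A_4$;
\item $r=5$ gives $D_5$;
\item $r=6,7,8$ give $E_6,E_7,E_8$.
\end{itemize}

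The main obstacle is showing that these $r$ vectors form a base, i.e. that every root is an integer combination with coefficients all of one sign, rather than merely spanning. Equivalently, one must show that $R_r$ has exactly the cardinality of the claimed root system.

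The first approach I would try is to enumerate $R_r$ directly. A root has the form $aL_0-\sum b_iL_i$ with $3a=\sum b_i$ and $\sum b_i^2=a^2+2$. Cauchy–Schwarz gives $9a^2\le r(a^2+2)$, which bounds $a$: $a\le 1$ for $r<8$ and $a\le 2$ for $r=8$, with $a=3$ also allowed at $r=8$. Counting the solutions gives $6+2=8$ roots for $r=3$, $20$ for $A_4$, $40$ for $D_5$, $72$, $126$ and $240$. These match the known sizes.

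Since the subsystem generated by the simple roots is already of the claimed type with that same cardinality, the two systems coincide.
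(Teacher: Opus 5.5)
Your plan is the standard argument. The paper gives no proof of its own and only cites Manin, whose proof runs along the same lines:
\begin{itemize}
\item negative definiteness of $K_r^\perp$ from the signature $(1,r)$;
\item reflections $s_\alpha(x)=x+(x,\alpha)\alpha$ preserving $R_r$;
\item the simple system $L_i-L_{i+1}$, $L_0-L_1-L_2-L_3$ with its $T$-shaped Gram matrix;
\item a count showing that $W(\Pi)\Pi$ exhausts $R_r$. Here $W(\Pi)\Pi$ is a root system of the predicted type, and it lies in $R_r$ because the reflections preserve $R_r$.
\end{itemize}
Your caveat about small $r$ is correct: $R_1=\emptyset$ and $R_2=\{\pm(L_1-L_2)\}$, so the rank-$r$ assertion in the statement only holds for $r\ge 3$.

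The genuine problem is the enumeration, which you yourself identify as the crux. From $9a^2\le r(a^2+2)$ one gets $(9-r)a^2\le 2r$, so $|a|\le 1$ holds only for $r\le 5$.
\begin{itemize}
\item For $r=6,7$ the bound is $|a|\le 2$, and $a=\pm 2$ really occurs. For $r=6$ these are $\pm(2L_0-L_1-\dots-L_6)$; for $r=7$ they are the fourteen classes $\pm(2L_0-\sum_{i\ne j}L_i)$. With your stated bound $|a|\le 1$ you would get $30+40=70$ roots for $r=6$ and $42+70=112$ for $r=7$, not $72$ and $126$. The comparison with $|E_6|$, $|E_7|$ would then fail; it would even contradict $W(\Pi)\Pi\subseteq R_r$.
\item For $r=8$, Cauchy--Schwarz gives $|a|\le 4$, not ``$a\le 2$ with $a=3$ also allowed''. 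So $|a|=4$ must be excluded separately, for instance because eight integers with sum $12$ have sum of squares at least $20>18$.
\end{itemize}
The numbers you quote are right, but the justification you wrote does not produce them.

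A clean repair works for $a\ge 0$; the case $a<0$ follows from $L\mapsto -L$. Subtracting the two conditions gives $\sum_i b_i(b_i-1)=(a-1)(a-2)$, which equals $2,0,0,2$ for $a=0,1,2,3$. For an integer $b$, $b(b-1)\ge 0$ with equality iff $b\in\{0,1\}$, and $b(b-1)=2$ iff $b\in\{2,-1\}$. Combined with $\sum b_i=3a$ and the number of available indices, this forces exactly four families:
\begin{itemize}
\item $L_i-L_j$ for $a=0$;
\item $L_0-L_i-L_j-L_k$ for $a=1$;
\item $2L_0-L_{i_1}-\dots-L_{i_6}$ for $a=2$;
\item $3L_0-2L_i-\sum_{j\ne i}L_j$ for $a=3$.
\end{itemize}
These give the counts $8,20,40,72,126,240$, and with this fix your concluding cardinality argument goes through.

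A cosmetic point: for $r=3$ there is no node $e_3$, so the $T$-shape description does not literally apply. There $e_0$ is orthogonal to $e_1,e_2$, which gives $A_2\times A_1$ as you state.
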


The following theorem can be found in \cite[Chapter IV,\S\! 26]{manincubic}.

\begin{theo}\label{Theorem bij exc class curves, strict trafos of lines}
    Let $k$ be an algebraically closed field, $d\leq 7$ and let $f\colon X \to \bP^2_k$ be a blow-up of $r=9-d$ points $p_1,\dots,p_r$ in general position so that the resulting surface $X$ is a del Pezzo surface of degree $d$. Then there is a bijection between the set of exceptional curves and exceptional classes. The image of an exceptional curve under $f$ belongs to one of the following types:
    \begin{enumerate}
        \item one of the points $p_i$,
        \item a line passing through two of the $p_i$,
        \item a conic passing through five of the points $p_i$,
        \item a cubic passing through seven of the points $p_i$ such that one of them is a double point,
        \item a quartic passing through eight of the points $p_i$ such that three of them are double points,
        \item a quintic passing through eight of the points $p_i$ such that six of them are double points,
        \item a sextic passing through eight of the points $p_i$ such that seven of them are double points and one is a triple point. 
    \end{enumerate}
    Moreover, the number of exceptional curves depending on the degree $d$ is given by the following table: 

    \begin{table}[H]
    \centering
    \begin{tabular}{c c c c c c c c c}
    \toprule
    $1$ & $2$ & $3$ & $4$ & $5$ & $6$ & $7$ & $8$ & $9$ \\ 
    \midrule
    $240$ & $56$ & $27$ & $16$ & $10$ & $6$ & $3$ & $1$ or $0$ & $0$\\
    \bottomrule
    \end{tabular}
\end{table}
    
\end{theo}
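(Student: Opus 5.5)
The argument runs in three steps: show that exceptional curves give exceptional classes, show that every exceptional class is the class of exactly one exceptional curve, and then read off the plane images and the counts from an explicit enumeration of $I_r$.

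\textbf{Step 1: curves to classes.} Let $C$ be an exceptional curve on $X$. By adjunction,
\[
(C,\omega_X) = 2g(C)-2-(C,C) = -2+1 = -1,
\]
so the class of $C$ lies in $I_r$ under the isomorphism $\Pic(X)\cong N_r$ of Proposition \ref{Proposition Pic is N_r}. Two distinct irreducible curves $C \neq C'$ satisfy $(C,C')\geq 0$, whereas $(C,C)=-1$. Hence distinct exceptional curves have distinct classes, and the map from exceptional curves to $I_r$ is injective.

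\textbf{Step 2: classes to curves.} Take $D\in I_r$. Riemann--Roch gives
\[
\chi(D)=1+\tfrac12\bigl((D,D)-(D,\omega_X)\bigr)=1 .
\]
Serre duality gives $h^2(D)=h^0(\omega_X-D)$, and this vanishes because $(\omega_X-D,-\omega_X)=-d+(D,\omega_X)\cdot(-1)\cdots$ is negative: $-\omega_X$ is ample and $(\omega_X-D)\cdot(-\omega_X)=-d-1<0$. Therefore $h^0(D)\geq 1$, and we may choose an effective divisor $E\in|D|$. I then claim $E$ is irreducible.

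\emph{Irreducibility of $E$ (the main obstacle).} Write $E=\sum n_iC_i$. Since $(E,-\omega_X)=1$ and $-\omega_X$ is ample, each component satisfies $(C_i,-\omega_X)\geq 1$. So $E$ is a single reduced curve $C$ with $(C,-\omega_X)=1$, and $(C,C)=-1$. Adjunction then gives $p_a(C)=0$. An integral curve of arithmetic genus $0$ is isomorphic to $\bP^1$, so $C$ is an exceptional curve. This proves bijectivity. It also shows the argument never used the general-position hypotheses explicitly; they enter only through ampleness of $-\omega_X$, which Theorem \ref{Theorem del pezzo alg closed field points gen pos} guarantees.

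\textbf{Step 3: plane images and counts.} Write $D=aL_0-\sum b_iL_i$. The class must satisfy
\[
\sum b_i = 3a-1, \qquad \sum b_i^2 = a^2+1 .
\]
If $D \neq L_i$, then $D$ is the class of an irreducible curve different from every exceptional curve $L_i$, so $b_i=(D,L_i)\geq 0$. Cauchy--Schwarz gives $(3a-1)^2\leq r(a^2+1)$, which forces $a\leq 6$ when $r\leq 8$. Solving case by case then yields exactly the seven types (1)--(7) listed in the statement. The image $f(C)$ is a plane curve of degree $a$ with multiplicity $b_i$ at $p_i$. Counting solutions for each $r$ gives
\[
240 = 8+28+56+56+56+28+8
\]
for $d=1$, and similarly for the other degrees. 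For $d=8$, $d=9$, and $\bP^1\times\bP^1$ the counts are checked directly.

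The most delicate part is Step 2: nonvanishing of $h^0$ together with irreducibility via ampleness. Everything after that is bookkeeping of integer solutions.
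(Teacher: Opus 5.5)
Your proof is correct and is essentially the standard argument from Manin's \emph{Cubic Forms}, Ch.~IV, \S 26. The paper gives no proof of its own and simply cites that source. Your route is the usual one there: adjunction gives injectivity; Riemann--Roch, Serre duality and ampleness of $-\omega_X$ give effectivity and irreducibility; and the plane images come from solving $\sum b_i = 3a-1$, $\sum b_i^2 = a^2+1$.

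Two small repairs are needed.
\begin{enumerate}
\item In Step 2, the garbled expression should simply read $(\omega_X-D,-\omega_X) = -(\omega_X,\omega_X)+(D,\omega_X) = -d-1$.
\item In Step 3, for $r=8$ Cauchy--Schwarz gives $a^2-6a-7\le 0$, i.e.\ $a\le 7$, not $a\le 6$. You must exclude $a=7$ separately. It is the equality case, which forces every $b_i = 20/8$, and that is not an integer. The identity $\sum b_i(b_i-1) = (a-1)(a-2)$ is a convenient way to organize the remaining case check.
\end{enumerate}
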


Finally, we have the very important connection between the Weyl group of the root system $R_r$, the automorphism group of $N_r$ and the permutation group of the exceptional classes, cf.\ \cite[Chapter IV, \S\! 26]{manincubic}.

\begin{theo}\label{Theorem isomorphism weyl group and group of autom}
The following groups are isomorphic:
\begin{enumerate}
    \item The Weyl group of $R_r$,
    \item the automorphism group of $N_r$ preserving $K_r$ and the bilinear form,
    \item the permutation group of the exceptional classes preserving the bilinear form.
\end{enumerate}
\end{theo}

The next proposition is a standard fact, and follows for instance from the bijection of exceptional curves and exceptional classes in Theorem \ref{Theorem bij exc class curves, strict trafos of lines} and the bilinear form on $N_7$. 

\begin{prop}\label{Proposition 4 lines one point}
    Let $X$ be a del Pezzo surface of degree $2$. Then no more than four $(-1)$-curves intersect in one point. Moreover, for two $(-1)$-curves $L$ and $L'$, we have $(L,L')\in \{0,1,2\}$, and for each $(-1)$-curve $L$ there exists exactly one $(-1)$-curve $L'$ with $(L,L')=2$. 
\end{prop}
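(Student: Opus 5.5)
We work over an algebraically closed field and use the blow-up model. By Theorem \ref{Theorem del pezzo alg closed field points gen pos}, a del Pezzo surface $X$ of degree $2$ is the blow-up of $7$ points $p_1,\dots,p_7$ in general position, and Proposition \ref{Proposition Pic is N_r} gives $\Pic(X)\cong N_7$ with $\omega_X\mapsto K_7$. By Theorem \ref{Theorem bij exc class curves, strict trafos of lines}, the $(-1)$-curves correspond bijectively to the $56$ classes in $I_7$. All three claims are therefore statements about intersection numbers in $N_7$, plus one geometric input for the bound of four. The case where the ground field is not algebraically closed follows by passing to $\overline{X}$.

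For the intersection numbers, take $L,L'\in I_7$ with $L\neq L'$. Set $M=L+L'$. Then $(M,K_7)=-2$ and $(M,M)=-2+2(L,L')$. By the Hodge index theorem on $K_7^\perp$ (or Theorem \ref{Theorem root systems and R_r}), the form is negative definite on $K_7^\perp$. Apply this to $M-\tfrac{(M,K)}{(K,K)}K=M+K$, using $(K,K)=2$. This gives $(M+K,M+K)=(M,M)+2(M,K)+2\le 0$, so $(L,L')\le 2$. Equality holds iff $M=-K$, that is, $L'=-K_7-L$. Moreover, $-K_7-L$ satisfies $(\cdot,K)=-2+1=-1$ and $(\cdot,\cdot)=2-2+(-1)=-1$, so it lies in $I_7$; it is distinct from $L$ because $-K_7\neq 2L$. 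This gives existence and uniqueness of $L'$ with $(L,L')=2$. Since $L$ and $L'$ are distinct irreducible curves, $(L,L')\ge 0$, so $(L,L')\in\{0,1,2\}$.

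For the bound, suppose $L_1,\dots,L_m$ are distinct $(-1)$-curves through a point $p$. The main step is to show that for $i\ne j$ the curves meet at $p$ transversally with $(L_i,L_j)=1$, and that no pair satisfies $L_i+L_j=-K$. Where $(L_i,L_j)=2$, we have $L_i+L_j\in|-K_X|$. The anticanonical map is the double cover $X\to\bP^2$, and $L_i,L_j$ are the two preimages of a bitangent line to the branch quartic, meeting at two distinct points (or with a tangency).

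Now let $s$ be the number of the $L_i$ for which the partner $-K-L_i$ is also among them. Consider $D=\sum L_i$. For pairs with $(L_i,L_j)\ge1$, use negativity: $D+\tfrac{m}{2}K$ lies in $K^\perp$, so $(D+\tfrac m2 K)^2\le 0$. Here $(D,K)=-m$ and $(D,D)=-m+\sum_{i\neq j}(L_i,L_j)\ge -m+m(m-1)$. This yields $-m+m(m-1)-m^2+\tfrac{m^2}{2}\le 0$, i.e.\ $m^2/2-2m\le 0$, so $m\le 4$.

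The obstacle I expect is justifying $(L_i,L_j)\ge1$ for all pairs through $p$, which holds since they share the point $p$. The value $2$ only increases $(D,D)$, so the inequality only gets stronger. Equality cases need no further check, since only the upper bound is claimed.
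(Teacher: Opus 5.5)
Your proof is correct. The paper gives no detailed argument, saying only that the statement follows from the bijection between $(-1)$-curves and exceptional classes together with the bilinear form on $N_7$. That suggests a direct check on the explicit list of $56$ classes.

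You replace any such case analysis with one fact: the form is negative definite on $K_7^\perp$.
\begin{itemize}
\item Projecting $L+L'$ onto $K_7^\perp$ gives $(L,L')\le 2$, with equality exactly when $L'=-K_7-L$.
\item Projecting $D=\sum L_i$ gives $m^2/2-2m\le 0$, once you know $(L_i,L_j)\ge 1$ for curves sharing a point.
\end{itemize}
This route is uniform and also gives the equality case: if $m=4$, then all $(L_i,L_j)=1$ and $\sum L_i=-2K_X$. It also explains the paper's remark that the argument fails in degree $1$. In degree $d$ the same computation gives $(D+\tfrac{m}{d}K)^2=(D,D)-m^2/d$, hence $m(\tfrac{d-1}{d}m-2)\le 0$, i.e.\ $m\le 2d/(d-1)$. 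This recovers the bound $3$ for cubic surfaces and is vacuous for $d=1$. The enumeration approach is more elementary but case-heavy, and it would not show why degree $1$ behaves differently.

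One thing to remove. The paragraph announcing, as the \emph{main step}, that the curves through $p$ meet transversally with $(L_i,L_j)=1$ and that no pair satisfies $L_i+L_j=-K$ is false in general. Since $(L,-K_X-L)=2$, the curves $L$ and $-K_X-L$ do meet, and at any common point they form exactly such a pair. Your final inequality uses only $(L_i,L_j)\ge 1$, as you note yourself. So that paragraph, and the count $s$ that you never use, should simply be deleted.
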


Similarly, one can show that on a del Pezzo surface of degree 3, or equivalently a smooth cubic surface, no more than three lines intersect in one point. 
\begin{defi}
    Let $X$ be a smooth cubic surface. A closed point on $X$ where three lines intersect is called an \emph{Eckardt point}.
\end{defi}

Surprisingly, the arguments of the above proposition fail for degree 1 del Pezzo surfaces. Instead, one has to use the ramification of the double cover $X \to Q \subseteq \bP^3$, where $Q$ is a quadratic cone, which will be briefly discussed in \S\! \ref{sec: antican systems}, and also a much more detailed analysis of the intersection graph. One gets the following result, cf.\ \cite[Theorem 1.1 and Theorem 1.2]{winterconcurrentlines}.

\begin{theo}
    Let $X$ be a del Pezzo surface of degree $1$ over an algebraically closed field $k$. If $\mathrm{char}(k) \ne 2,3$, then the maximal number of $(-1)$-curves intersecting in one point is $10$. If $\mathrm{char}(k)=3$, the maximal number of such curves is $12$, and if $\mathrm{char}(k)=2$, the maximal number of such curves is $16$. 
\end{theo}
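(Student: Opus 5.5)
Throughout, write $X$ for the degree $1$ surface and $r=8$; every curve below is a $(-1)$-curve on $X$. I would bound the number of such curves through a fixed point $P\in X$ by combining lattice-theoretic information about the intersection graph of $(-1)$-curves with the geometry of the anticanonical double cover $\pi\colon X\to Q\subseteq\bP^3$. Here $Q$ is the quadratic cone and $\pi$ is the morphism given by $|-2K_X|$.

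\textbf{Step 1: pairwise intersection numbers.} Using Proposition \ref{Proposition Pic is N_r} and Theorem \ref{Theorem isomorphism weyl group and group of autom}, I would determine the possible values of $(E,E')$ for two distinct exceptional classes $E,E'\in I_8$. They lie in $\{0,1,2,3\}$, and $(E,E')=3$ exactly when $E'=-2K_8-E$. Two distinct curves through $P$ must satisfy $(E,E')\geq 1$. Moreover, the pair $E,-2K-E$ is swapped by the Bertini involution and is the preimage of a conic $\pi(E)$ on $Q$ cut out by a plane tritangent to the branch curve.

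\textbf{Step 2: pass to the cone.} If curves $E_1,\dots,E_n$ pass through $P$, then their images are plane sections of $Q$ through $\pi(P)$, each tangent to the branch locus $B$ at every point where they meet it. Each image conic $C_i$ accounts for at most two curves, namely $E_i$ and its Bertini partner. So it suffices to count the tritangent planes through the point $\pi(P)$.

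\textbf{Step 3: bound the number of such planes.} This is the case analysis.
\begin{itemize}
\item If $\pi(P)\notin B$, both preimages of $\pi(P)$ are distinct points, so at most one curve of each pair passes through $P$. A count using $(E_i,E_j)\geq 1$ for all $i\ne j$ should give $n\leq 10$. Concretely, I would find the maximal cliques in the graph on $I_8$ with edges $(E,E')\geq 1$ that lie in distinct Bertini pairs and are realizable; I expect this to be a finite computation in $W(E_8)$ over the 240 classes.
\item If $\pi(P)\in B$ is a ramification point, the planes must be tangent to $B$ there. Let $\ell$ be the tangent line; then the tritangent planes through $\pi(P)$ form a pencil containing $\ell$. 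Restricting $B$, written as a cubic section of the cone, to this pencil reduces the question to counting planes in a one-parameter family where the residual intersection is a square. This reduces to the counting of bitangent-type conditions, and the answer depends on separability of a certain Gauss-type map.
\end{itemize}
In characteristic not $2,3$ I expect the count to give at most $5$ planes, hence $10$ curves. In characteristic $3$, inseparability should allow $6$ planes, hence $12$. In characteristic $2$, where the double cover is purely inseparable over the branch locus, there should be $8$ planes and $16$ curves. I would realize these bounds with an explicit Weierstrass-type normal form $w^2+a(x,y)w=z^3+\dots$ and exhibit examples achieving equality.

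\textbf{Main obstacle.} The hardest part is the ramified case in Step 3, where the bound genuinely depends on the characteristic. I would first try to write $B$ in the normal form $w^2=z^3+f_4(x,y)z+f_6(x,y)$, or its characteristic-$2$ and characteristic-$3$ analogues. For the pencil of planes through the point, tangency then becomes the condition that a polynomial in the pencil parameter has repeated roots. I would bound its degree in each characteristic, accounting for inseparability.
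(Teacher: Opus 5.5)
The paper gives no proof of this theorem. It quotes it from \cite{winterconcurrentlines}, remarking only that one needs the ramification of $X\to Q$ together with a detailed analysis of the intersection graph. Your Steps 1 and 2 fit that outline and are fine, except that in characteristic $2$ ``tangent to $B$'' must be replaced by the splitting of an Artin--Schreier equation.

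The gap is the unramified case of Step 3, which you treat as a characteristic-free clique computation in $W(E_8)$. That computation does not give $10$; it gives $12$. Take $A_2^4\subset E_8$, and in the $a$-th copy take the three roots $r_{a,1},r_{a,2},r_{a,3}$ with $r_{a,1}+r_{a,2}+r_{a,3}=0$. The classes $E_{a,i}=r_{a,i}-K_X$ have the following properties:
\begin{itemize}
\item $E_{a,1}+E_{a,2}+E_{a,3}=-3K_X$;
\item their intersection number is $2$ inside a triple and $1$ across triples;
\item they contain no Bertini pair.
\end{itemize}
So they form a $12$-clique of exactly the kind you describe. Moreover, $12$ is the combinatorial maximum. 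In the positive definite normalization, the Gram matrix of such roots is $2I$ minus an adjacency matrix. Hence each component is a finite or affine simply-laced Dynkin diagram, and never $\tilde A_1$, which would be a Bertini pair. This allows at most $8+4$ vectors. Everything therefore rests on the word ``realizable'', and you give no mechanism for it.

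Realizability in fact depends on the characteristic, and in characteristic $3$ the maximum $12$ is attained \emph{off} the ramification curve. One way to see the missing constraint is as follows. Let $F\in|-K_X|$ be the anticanonical curve through $P$, and let $O$ be the base point.
\begin{itemize}
\item Restriction gives a homomorphism $\varphi\colon K_X^\perp\cong E_8\to F^{\mathrm{sm}}$, using the group law with identity $O$.
\item A $(-1)$-curve $E$ passes through $P$ if and only if $\varphi(E+K_X)=P$.
\item No root lies in $\ker\varphi$, and $P\in R$ if and only if $2P=0$.
\item Conversely, blowing up eight points on a plane cubic realizes every such $\varphi$ whose kernel contains no root.
\end{itemize}
For the configuration above, $\ker\varphi$ contains the lattice $D$ spanned by the differences of the twelve roots. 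Every nonzero class of $E_8/D\cong(\bZ/3\bZ)^3$ contains a root. Hence $\ker\varphi=D$ and $(\bZ/3\bZ)^3\hookrightarrow F^{\mathrm{sm}}$. This happens only when $F$ is cuspidal and $\mathrm{char}(k)=3$, and then $2P\neq 0$, so $P\notin R$.

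The ramified case of your Step 3 is also wrong in characteristic $3$. Six orthogonal pairs $\pm r_i$ through a point of $R$ force $(\bZ/2\bZ)^3\subseteq F^{\mathrm{sm}}$. The reason is that any six mutually orthogonal roots of $E_8$ contain three quadruples whose half-sums are roots. So your predicted six tritangent planes through a branch point in characteristic $3$ cannot occur.

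As written, Step 3 would prove $n\le 10$ off $R$ in all characteristics, which is false in characteristic $3$. It also contains no argument excluding $11$ and $12$ off $R$ in the other characteristics. The missing ingredient is a constraint beyond the graph that is sensitive to the characteristic. Examples are the torsion of $F^{\mathrm{sm}}$ used above, or the geometric elimination of graph configurations carried out in \cite{winterconcurrentlines}.
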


\begin{defi}
    Let $X$ be a del Pezzo surface of degree $1$ or $2$. A closed point on $X$ where the maximal possible number of $(-1)$-curves intersect is called a \emph{generalized Eckardt point}.  
\end{defi}

According to \cite{wintergeneralized}, there are only quite a few examples of del Pezzo surfaces of degree 1 containing a generalized Eckardt point, and all these examples contain only one such point. As already mentioned in the introduction, Martin and Wagener recently found an example in characteristic 5 with 144 generalized Eckardt points.

During our work on the inverse Galois problem, we found the following example.

\begin{examp}
    Let $\bF_8=\bF_2(\beta)$ with $\beta^3+\beta+1=0$ be the field with eight elements. Let 
    \begin{align*}
    f(x,y,z,w)= w^2&+z^3+w(x^3+\beta^3x^2y+\beta^6xy^2+\beta^2 y^3)\\
    &+z(\beta x^4+\beta^4x^3y+\beta^3x^2y^2+\beta^3xy^3+y^4)+\beta^4x^5y+\beta x^3y^3.
    \end{align*}
    Then $V(f) \subseteq \bP_{\bF_{64}}(1,1,2,3)$ is a del Pezzo surface of degree 1. All 240 exceptional curves are defined over $\bF_{64}$ and it contains exactly 15 generalized Eckardt points.
\end{examp}

The statements in the example can be verified using the Magma code in \cite{Karras}. Let us briefly explain the intuition behind the example using notions that will be introduced later:

The surface above is an example of a degree 1 del Pezzo surface over $\bF_8$ of type 8. An inspection of Table \ref{Table WE8} in the appendix yields that for small finite fields the number of rational $(-1)$-curves compared to the total number of rational points on a degree 1 del Pezzo surface of type 8 is quite large. This suggests that there must be a lot of intersection points on this surface. It turns out that the example above even contains generalized Eckardt points.

\subsection{Del Pezzo surfaces over non-closed fields} \label{sec: non-closed}

In the previous section, we focused on geometric properties of del Pezzo surfaces, in particular the behavior of the $(-1)$-curves. In this section, we consider del Pezzo surfaces over non-closed fields. We will first briefly explain that being del Pezzo and the intersection form behave well under field extensions. We then discuss the Galois action on the geometric Picard group.

Let $X$ be a nice surface over a field $k$. Let $K/k$ be a field extension, and let $X_K=X \otimes K$. Recall that the intersection form on $X$ is defined as
\[(\sL,\sE) \coloneq \chi(\cO_X)-\chi(\sL^{-1})-\chi(\sE^{-1})+\chi(\sL^{-1} \otimes\sE^{-1}) \]
for line bundles $\sL$ and $\sE$ on $X$, where $\chi(\cdot)$ is the Euler characteristic. By flat base change, we obtain $(\sL,\sE)=(\sL\otimes K, \sE \otimes K)$. Using the cohomological criterion for ampleness, it is easy to see that a line bundle $\sL$ on $X$ is ample if and only if $\sL \otimes K$ is ample. Finally, we have $\omega_{X_K} \cong \omega_X \otimes K$. Since being smooth and projective is stable under base change, we obtain the following two result. 

\begin{prop}\label{Proposition del pezzo well behaved base change}
    Let $X$ be a nice surface over a field $k$. Then the following are equivalent:
    \begin{enumerate}
        \item $X$ is a del Pezzo surface of degree $d$,
        \item $X_K$ is a del Pezzo surface of degree $d$ for some field extension $K/k$,
        \item $X_K$ is a del Pezzo surface of degree $d$ for all field extensions $K/k$.
    \end{enumerate}
\end{prop}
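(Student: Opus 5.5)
The plan is to reduce each of the three defining properties of a del Pezzo surface of degree $d$ (being nice, having ample anticanonical sheaf, and having $(\omega,\omega)=d$) to the corresponding property of $X_K$. Each reduction should follow from the base-change facts recalled just before the statement. Since (3) trivially implies (2), it suffices to prove (1) $\Rightarrow$ (3) and (2) $\Rightarrow$ (1). I would first handle niceness: $X$ is assumed nice, and smoothness, projectivity and geometric integrality are all stable under base change. So $X_K$ is again a nice surface over $K$, and it has dimension $2$ because dimension is invariant under field extension.

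The second step is to identify the canonical sheaves. Since $X \to \operatorname{Spec} k$ is smooth, $\Omega_{X_K/K} \cong \Omega_{X/k} \otimes_k K$. Taking top exterior powers gives $\omega_{X_K} \cong \omega_X \otimes K$, and dualizing gives $\omega_{X_K}^{-1} \cong \omega_X^{-1} \otimes K$.

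For the degree, I would use flat base change for cohomology, $H^i(X_K,\sF\otimes K) \cong H^i(X,\sF)\otimes_k K$. This shows that every Euler characteristic in the defining formula of the intersection pairing is unchanged by passing to $K$. Hence $(\sL,\sE) = (\sL\otimes K,\sE\otimes K)$ for all line bundles $\sL,\sE$ on $X$. Applying this with $\sL=\sE=\omega_X$ and using the identification above gives $(\omega_{X_K},\omega_{X_K}) = (\omega_X,\omega_X)$.

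The remaining point, which I expect to be the only one needing care, is that $\sL$ is ample on $X$ if and only if $\sL\otimes K$ is ample on $X_K$. Here the underlying schemes genuinely change, so I would use Serre's cohomological criterion on both sides: a line bundle $\sL$ on a projective scheme over a field is ample if and only if for every coherent sheaf $\sF$ one has $H^i(\sF\otimes\sL^n)=0$ for $i>0$ and $n\gg0$.

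\emph{Ample on $X$ implies ample on $X_K$.} Ampleness of $\sL$ gives some $\sL^{m}$ very ample, i.e.\ inducing a closed immersion into $\bP^N_k$. Base change keeps this a closed immersion into $\bP^N_K$, so $\sL^m\otimes K$ is very ample and $\sL\otimes K$ is ample.

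\emph{Ample on $X_K$ implies ample on $X$.} Take a coherent $\sF$ on $X$. By flat base change, $H^i(X,\sF\otimes\sL^n)\otimes_k K \cong H^i(X_K,(\sF\otimes K)\otimes(\sL\otimes K)^n)$. The right-hand side vanishes for $i>0$ and $n\gg 0$ by Serre's criterion on $X_K$. Since $K/k$ is faithfully flat, the left-hand group vanishes as well, and Serre's criterion on $X$ shows $\sL$ is ample.

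Combining the three steps, (1) and (2) are each equivalent to the statement that $X_K$ is nice, $\omega_X^{-1}\otimes K$ is ample and $(\omega_X,\omega_X)=d$. This closes the cycle of implications for any one extension $K$, and hence for all of them.
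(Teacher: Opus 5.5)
Your proof is correct and follows the same route as the paper's sketch: flat base change keeps the Euler characteristics defining $(\cdot,\cdot)$ unchanged, $\omega_{X_K}\cong\omega_X\otimes K$, niceness is stable under base change, and Serre's cohomological criterion transfers ampleness. Using very ampleness for the direction from $X$ to $X_K$ is a sensible refinement, because not every coherent sheaf on $X_K$ is pulled back from $X$, so the cohomological criterion alone is less immediate in that direction.
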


\begin{prop}\label{Proposition Gal preserves int form}
    Let $K/k$ be a Galois extension and $X$ a nice surface over $k$. Then ${\rm Gal}(K/k)$ acts on $\Pic(X_K)$ via pullback, and this action preserves the intersection form and the canonical sheaf.
\end{prop}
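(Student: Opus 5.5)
The action is defined by pullback along the automorphisms of $X_K$ induced by Galois elements, so the plan is to verify three things: that this gives a group action on $\Pic(X_K)$, that it preserves the intersection form, and that it fixes the class of $\omega_{X_K}$. Every $\sigma \in {\rm Gal}(K/k)$ induces an automorphism $\sigma^* \colon \operatorname{Spec} K \to \operatorname{Spec} K$, and base change gives an automorphism $\tilde\sigma = \id_X \times \sigma^*$ of the scheme $X_K = X \times_k \operatorname{Spec} K$. This automorphism is not $K$-linear, but it is an automorphism over $X$, and it is $\sigma$-semilinear over $K$. Pullback $\sL \mapsto \tilde\sigma^*\sL$ sends line bundles to line bundles and respects tensor products, so it induces a group automorphism of $\Pic(X_K)$. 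Since $\widetilde{\sigma\tau} = \tilde\tau\circ\tilde\sigma$, we get $(\sigma\tau)^\ast = \sigma^\ast\circ\tau^\ast$ on $\Pic(X_K)$ after the obvious convention (or an inverse to get a left action). This is routine.

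For the intersection form, I will use the Euler characteristic formula recalled above. Because $\tilde\sigma$ is an isomorphism of schemes, pullback along it identifies $H^i(X_K,\tilde\sigma^*\sF)$ with $H^i(X_K,\sF)$ as abelian groups. This identification is $\sigma$-semilinear over $K$, so the $K$-dimensions agree. Hence $\chi(\tilde\sigma^*\sF)=\chi(\sF)$ for every coherent sheaf $\sF$. Since pullback commutes with duals and tensor products, and $\tilde\sigma^*\cO_{X_K}=\cO_{X_K}$, each term in
\[(\sL,\sE) = \chi(\cO)-\chi(\sL^{-1})-\chi(\sE^{-1})+\chi(\sL^{-1}\otimes\sE^{-1})\]
is invariant. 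Therefore $(\tilde\sigma^*\sL,\tilde\sigma^*\sE)=(\sL,\sE)$.

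For the canonical sheaf, the key fact is $\omega_{X_K}=\omega_X\otimes_k K = p^*\omega_X$, where $p\colon X_K\to X$ is the projection. Since $p\circ\tilde\sigma = p$, it follows that $\tilde\sigma^*\omega_{X_K} = \tilde\sigma^* p^*\omega_X = p^*\omega_X = \omega_{X_K}$. One could alternatively argue intrinsically via $\Omega_{X_K/K}$, which requires noting that $\tilde\sigma$ covers $\sigma^*$ on the base so that $\tilde\sigma^*\Omega_{X_K/K}\cong\Omega_{X_K/K}$ by functoriality of relative differentials under base change. The base-change description is cleaner, so I will use it.

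The only step needing care is the semilinearity in the cohomology comparison, that is, making sure the $K$-dimensions of cohomology are preserved even though $\tilde\sigma$ is not a $K$-morphism. I will handle it as follows. Cohomology of $\tilde\sigma^*\sF$ on $X_K$ computed through $\tilde\sigma$ is the base change of the $K$-vector space $H^i(X_K,\sF)$ along the field automorphism $\sigma$, and base change along a field automorphism preserves dimension.
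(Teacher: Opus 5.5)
Your proof is correct and follows the route the paper indicates in the paragraph before the statement: invariance of intersection numbers via flat base change (here along $\mathrm{Spec}(\sigma)$, which is exactly your semilinear comparison $H^i(X_K,\tilde\sigma^*\sF)\cong H^i(X_K,\sF)\otimes_{K,\sigma}K$), and invariance of the canonical class via $\omega_{X_K}\cong\omega_X\otimes_k K$ together with $p\circ\tilde\sigma=p$. You actually give more detail than the paper, which only sketches these two facts and, as literally stated, applies flat base change only to compare $X$ with $X_K$ rather than along $\sigma$ itself.
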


Let $X$ be a del Pezzo surface over a perfect field $k$, and let $G$ be the absolute Galois group. Then, by the above proposition, we get a homomorphism of groups $G \to \mathrm{Aut}(\mathrm{Pic}(\overline{X}))$. After choosing a basis as in Proposition \ref{Proposition Pic is N_r}, we obtain a homomorphism $G \to W(E_N)$ whose image is well defined up to conjugation. If $k$ is a finite field, the image is a cyclic subgroup. We can now precisely formulate the inverse Galois problem for finite fields.
\begin{question}[Inverse Galois problem for del Pezzo surfaces over finite fields]
    Let $1 \leq d\leq 9$ be an integer, $k$ a finite field with absolute Galois group $G$. For a given cyclic conjugacy class in $W(E_{9-d})$: Does there exist a del Pezzo surface of degree $d$ over $k$ such that the image of $G$ in $W(E_{9-d})$ belongs to this conjugacy class?
\end{question}

The classification of these cyclic conjugacy classes is due to Carter \cite{Carter}. For $W(E_7)$ and $W(E_8)$, we have summarized the necessary data in Tables \ref{Table WE7} and \ref{Table WE8} in the appendix. In the context of del Pezzo surfaces, we use the following definition.  

\begin{defi}\label{Definition Type of del pezzo}
Let $X$ be a del Pezzo surface of degree $d$ over a finite field. The \emph{type} of $X$ is the number of the cyclic conjugacy class as listed in Tables \ref{Table WE7} and \ref{Table WE8} in the appendix associated with the Galois action on $\Pic(\overline{X})$. We say that a certain type exists if there exists a del Pezzo surface of this type.  
\end{defi}

To each conjugacy class Carter associates a certain symbol, the so-called Carter symbol (in \cite{Carter} this is called admissible diagram). Each symbol represents a certain graph associated to a cyclic conjugacy class. The precise meaning of the symbol is not important to us except in the next lemma, see \cite[Lemma 2.10]{trepalin20}. For the sake of completeness, we give a proof of this lemma.

\begin{lem}\label{Lemma carter symbol blow up}
    Let $X$ be a del Pezzo surface over a finite field with Carter symbol $R$. Suppose the blow-up $Y$ of $X$ in a point of degree $n$ is again a del Pezzo surface. Then the Carter symbol of $Y$ is $R\times A_{n-1}$. 
\end{lem}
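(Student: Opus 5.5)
Let $F$ denote the Frobenius acting on $\Pic(\overline{X})$, and set $r=9-d$ where $d$ is the degree of $X$. The Carter symbol of a cyclic class is determined by the decomposition of $F$ as a product of reflections in an admissible diagram. In practice this means the following. The symbol $R$ encodes the characteristic polynomial of $F$ on the orthogonal complement $V$ of $K_r$, written as a product of the characteristic polynomials attached to the components $A_k$, $D_k$, $E_k$ and their variants. Recall that an $A_{n-1}$ component is a Coxeter element of $W(A_{n-1})$, acting as an $n$-cycle on a standard basis modulo the sum vector.

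The plan is to compute the action of $F$ on $\Pic(\overline{Y})$ explicitly and to exhibit a Carter diagram for it of the form $R\sqcup A_{n-1}$. Let $\pi\colon \overline{Y}\to\overline{X}$ be the blow-up of the $n$ geometric points $q_1,\dots,q_n$ lying over the closed point, with exceptional curves $E_1,\dots,E_n$. Frobenius permutes the $q_i$ cyclically, so we may order them so that $F(E_i)=E_{i+1}$ with indices mod $n$. We have
\[\Pic(\overline{Y})=\pi^*\Pic(\overline{X})\oplus \bigoplus_{i=1}^n\bZ E_i,\]
and this decomposition is orthogonal and $F$-stable. Moreover $K_Y=\pi^*K_X+\sum_i E_i$.

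Next we describe the roots of $Y$. The root lattice $R_{r+n}\subset K_Y^\perp$ contains $\pi^*R_r$, since $\pi^*K_X^\perp\subset K_Y^\perp$. It also contains the roots $E_i-E_{i+1}$ for $1\le i\le n-1$, and these form a system of type $A_{n-1}$ orthogonal to $\pi^*R_r$. On the span of the $E_i-E_{i+1}$, the map $F$ acts as the $n$-cycle, which is the product of the simple reflections $s_{E_1-E_2}\cdots s_{E_{n-1}-E_n}$. This product is a Coxeter element of $A_{n-1}$, with admissible diagram $A_{n-1}$.

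On $\pi^*R_r$, the map $F$ acts exactly as on $R_r$, so it is a product of reflections in roots forming an admissible diagram of type $R$. These reflections commute with the reflections in the $E_i-E_{i+1}$ by orthogonality. Hence $F$ on $K_Y^\perp\otimes\bR$ is the product of the reflections of the union of the two diagrams. The union of two orthogonal admissible diagrams is again admissible: the roots remain linearly independent and the graph is the disjoint union. It therefore gives the symbol $R\times A_{n-1}$.

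The main obstacle is to make sure that this product really is an element of the Weyl group in the correct form on all of $K_Y^\perp$, and not just on a sublattice. The span of $\pi^*(K_X^\perp)$ and the $E_i-E_{i+1}$ has rank $r+n-1$, one less than the rank $r+n$ of $K_Y^\perp$. On the remaining direction, orthogonal to both pieces inside $K_Y^\perp$, the map $F$ must act trivially. This direction is spanned by $n\pi^*K_X - r'\sum E_i$ for suitable coefficients, and it is $F$-fixed because $K_X$ and $\sum E_i$ are. This is consistent with the diagram having $r+n-1$ nodes, since Carter's symbol counts only the non-fixed part.

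Finally, Carter's classification assigns to each conjugacy class a symbol, possibly after choosing among equivalent diagrams. So it suffices to note that the class of $F$ is represented by the admissible diagram $R\sqcup A_{n-1}$, which is what the lemma asserts, with the convention that $A_0$ is empty when $n=1$.
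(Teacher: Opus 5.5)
Your proof is correct and follows essentially the same route as the paper: both use the orthogonal, Frobenius-stable splitting $\Pic(\overline{Y})=\pi^*\Pic(\overline{X})\oplus\bigoplus_i\bZ E_i$, with symbol $R$ on the first summand and the $n$-cycle on the $E_i$ (a Coxeter element of type $A_{n-1}$ in the roots $E_i-E_{i+1}$) on the second. You make explicit the reflection factorization and the admissibility of $R\sqcup A_{n-1}$, which the paper leaves implicit. There are two harmless slips in your side remarks. The diagram has $|R|+n-1\le r+n-1$ nodes, not necessarily $r+n-1$ (e.g.\ $R=\emptyset$). The extra fixed direction is spanned by $n\pi^*K_X+d\sum_i E_i$ with $d=K_X^2$.
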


\begin{proof}
    Let $E_1, \dots,E_n$ be the exceptional curves associated to the blow-up of $\overline{X}$. Hence
    \[\mathrm{Pic}(\overline{Y}) = \mathrm{Pic}(\overline{X}) \oplus \bZ E_1 \oplus \dots \oplus \bZ E_n. \]
    The action of the absolute Galois group on $\mathrm{Pic}(\overline{Y})$ is thus determined by the action on $\mathrm{Pic}(\overline{X})$ and on $\bZ E_1 \oplus \dots \oplus \bZ E_n$. On $\mathrm{Pic}(\overline{X})$ the Carter symbol associated to the action is $R$. On $\bZ E_1 \oplus \dots \oplus \bZ E_n$ the action is cyclic and according to Carter the associated graph is $A_{n-1}$. Thus, the Carter symbol has to be $R \times A_{n-1}$.  
\end{proof}

The following lemma follows easily from the definition of the strict transform. 

\begin{lem}\label{Lemma strict trafo under galois}
    Let $X$ be a del Pezzo surface over $\bF_q$, and suppose $X$ is obtained by blowing up closed points in $\bP^2_{\bF_q}$. Let $C$ be one of the plane curves described in Theorem \ref{Theorem bij exc class curves, strict trafos of lines}. Let $\sigma \in {\rm Gal}(\overline{\bF_q}/\bF_q)$. Then for the strict transforms, we have $\sigma(\widetilde{C}) = \widetilde{\sigma(C)}$.
\end{lem}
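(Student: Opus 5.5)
The plan is to reduce the statement to two facts. First, the blow-up $f\colon X \to \bP^2_{\bF_q}$ is defined over $\bF_q$. Second, for any $\bF_q$-morphism, base change to $\overline{\bF_q}$ is compatible with the Galois action by transport of structure. Concretely, write $\overline{f}\colon \overline{X} \to \bP^2_{\overline{\bF_q}}$ for the base change of $f$. The element $\sigma$ acts on $\overline{X} = X \otimes_{\bF_q} \overline{\bF_q}$ and on $\bP^2_{\overline{\bF_q}}$ via $\id \otimes \sigma$. Because $f$ is defined over $\bF_q$, we have $\overline{f}\circ \sigma = \sigma \circ \overline{f}$. We also use that $\overline{f}$ is the blow-up of $\bP^2_{\overline{\bF_q}}$ in the finite reduced set $\{p_1,\dots,p_r\}$, the geometric points of the blown-up closed points. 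Since this set is defined over $\bF_q$, it is stable under $\sigma$.

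The key steps are as follows, in order.

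\begin{enumerate}
\item Let $U = \bP^2_{\overline{\bF_q}} \setminus \{p_1,\dots,p_r\}$. Since $\sigma$ permutes the $p_i$, we get $\sigma(U) = U$.
\item The exceptional locus $E = \overline{f}^{-1}(\{p_1,\dots,p_r\})$ satisfies $\sigma(E) = E$, using $\overline{f}\circ\sigma = \sigma\circ\overline{f}$.
\item The restriction $\overline{f}|_{\overline{X}\setminus E}\colon \overline{X}\setminus E \to U$ is an isomorphism and commutes with $\sigma$.
\end{enumerate}

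Now recall that the strict transform $\widetilde{C}$ is, by definition, the closure in $\overline{X}$ of $\overline{f}^{-1}(C \cap U)$. We then compute
\[
\sigma(\widetilde{C}) = \sigma\Bigl(\overline{\overline{f}^{-1}(C\cap U)}\Bigr) = \overline{\sigma\bigl(\overline{f}^{-1}(C\cap U)\bigr)} = \overline{\overline{f}^{-1}(\sigma(C)\cap U)} = \widetilde{\sigma(C)}.
\]
The second equality holds because $\sigma$ is a homeomorphism of the underlying topological space, so it commutes with closures. The third equality uses the commutation relation together with $\sigma(U) = U$.

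Two remarks complete the argument. We need $C\cap U$ to be dense in $C$, so that the closure really is the strict transform. This holds because $C$ is one of the curves in Theorem \ref{Theorem bij exc class curves, strict trafos of lines}, which is irreducible and not contained in the finite set of points. Also, $\sigma(C)$ is again a curve of the same type through the permuted points with the same multiplicities, so it is again one of the listed curves. Its strict transform is then the corresponding exceptional curve.

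The only point needing care is step 3, the commutation $\overline{f}\circ\sigma=\sigma\circ\overline{f}$ and the stability of the blown-up locus. I would justify this directly from the definition of base change: $\sigma$ acts through the second factor of the fibre product, and $\overline{f} = f\times \id$ acts through the first. In this reading $\sigma$ is a scheme automorphism (not over $\overline{\bF_q}$), which is harmless since only topological closure and preimages are used.
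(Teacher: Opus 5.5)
Your proposal is correct and is the argument the paper has in mind. The paper gives no written proof, only the remark that the lemma follows easily from the definition of the strict transform. Your argument unpacks exactly that: $\overline{f}$ commutes with $\sigma$ because $f$ is defined over $\bF_q$, so $\sigma$ carries the closure of $\overline{f}^{-1}(C\cap U)$ to the closure of $\overline{f}^{-1}(\sigma(C)\cap U)$.
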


The next theorem can be found in \cite[Theorem 9.3.3]{poonen}.

\begin{theo}\label{Theorem contraction of an galois orbit}
    Let $X$ be a nice surface over a field $k$. Let $\{C_1,\dots,C_n\}$ be a Galois-invariant set of pairwise disjoint $(-1)$-curves in $\overline{X}$. Then these can be contracted, i.e., there exists a nice surface $X'$ over $k$ and a birational morphism $f \colon X \to X'$ such that $f$ is the composition of $n$ blow-ups, where each of the curves $C_i$ is the exceptional divisor of one blow-up. 
\end{theo}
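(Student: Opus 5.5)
The statement is Theorem \ref{Theorem contraction of an galois orbit}, quoted from \cite[Theorem 9.3.3]{poonen}. My plan is to prove it by Galois descent from the geometric Castelnuovo contraction. Passing to a finite Galois extension $K/k$ over which all $C_i$ are defined would be harmless, but I prefer to work directly with the union
\[C=C_1\cup\dots\cup C_n\subseteq \overline{X}.\]
Since the set $\{C_1,\dots,C_n\}$ is Galois-invariant, $C$ is Galois-stable, so it descends to a reduced closed subscheme $D\subseteq X$ defined over $k$.

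\textbf{Step 1: contract over $\overline{k}$.} Over $\overline{k}$ the curves are pairwise disjoint $(-1)$-curves. By Castelnuovo's criterion (\cite[Chapter V.3]{HartshorneAG}), applied one curve at a time, they contract to smooth points $x_1,\dots,x_n$ of a smooth projective surface $\overline{X}'$. Disjointness guarantees that after contracting some of the $C_i$, the remaining ones are still $(-1)$-curves, so the process can continue.

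\textbf{Step 2: build a $k$-rational line bundle that realises the contraction.} Take a very ample line bundle $H$ on $X$ defined over $k$. Set $m_i=(H,C_i)$; since Galois preserves intersection numbers (Proposition \ref{Proposition Gal preserves int form}) and permutes the $C_i$, $m_i$ is constant on Galois orbits. Let
\[L=H+\sum_i m_iC_i .\]
Then $L$ is Galois-invariant and is the class of a $k$-rational divisor, namely $H+\sum(\text{orbit sums})$, each orbit sum being a $k$-divisor. We have $(L,C_i)=m_i-m_i=0$.

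\textbf{Step 3: identify the morphism given by $L$.} Over $\overline{k}$, $L$ is the pullback of a line bundle $L'$ on $\overline{X}'$, by the standard formula relating $\Pic$ of a blow-up to $\Pic$ of the base. I then need to show two things:
\begin{itemize}
\item some multiple of $L'$ is very ample on $\overline{X}'$;
\item the corresponding sections of $L^{\otimes N}$ on $X$ generate it and contract exactly the $C_i$.
\end{itemize}
The first follows from ampleness of $L'$ via Nakai--Moishezon: $L'^2=H^2+\sum m_i^2>0$, and for any curve $\Gamma'$ on $\overline{X}'$ with strict transform $\Gamma$ we get $(L',\Gamma')=(H,\Gamma)+\sum m_i\,\mathrm{mult}_{x_i}\Gamma'>0$. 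This is the step I expect to need the most care, namely checking global generation and that cohomology commutes with base change. Since $H^0(X,L^N)\otimes\overline{k}=H^0(\overline{X},L^N)$ by flat base change, the morphism $\phi\colon X\to\bP(H^0(X,L^N))$ defined over $k$ becomes, after base change, exactly the map $\overline{X}\to\overline{X}'\hookrightarrow\bP^M$.

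\textbf{Step 4: conclude.} Let $X'$ be the image of $\phi$, with $f\colon X\to X'$. Then $X'$ is a $k$-form of $\overline{X}'$, hence a nice surface, and $f$ is birational. Geometrically $f$ is the composition of the $n$ blow-ups at $x_1,\dots,x_n$, and each $C_i$ is the exceptional divisor of one of these blow-ups, which proves the theorem.

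An alternative route for Steps 2--4 is to apply Galois descent for quasi-projective varieties with the canonical descent datum on $\overline{X}'$ induced by functoriality of the contraction. The line bundle argument above provides this descent directly, without invoking the general descent machinery.
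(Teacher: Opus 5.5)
The paper does not prove this statement; it takes it from \cite[Theorem 9.3.3]{poonen}. Your argument is a correct, self-contained proof. It differs from the descent argument you mention at the end.

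That route contracts over $\overline{k}$ (or over a finite Galois extension splitting the $C_i$). It then uses uniqueness of the Castelnuovo contraction to turn the Galois action on $\overline{X}$ into a descent datum on $\overline{X}'$, invokes effectivity of Galois descent for (quasi-)projective varieties, and finally descends the equivariant morphism $\pi$.

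Your Steps 2--4 replace this machinery with an explicit construction:
\begin{itemize}
\item $L=H+\sum m_iC_i$ is an honest $k$-divisor. You rightly build it from orbit sums rather than from a Galois-invariant class, which in general need not come from $\Pic(X)$.
\item Nakai--Moishezon shows that $L$ is the pullback of an ample $L'$.
\item The single $k$-morphism given by $|L^{\otimes N}|$ produces $X'$ and $f$ at the same time.
\end{itemize}
In effect $L'$ is the Galois-stable ample line bundle that makes the descent effective, so the two proofs share the same core. Yours needs only flat base change and Nakai--Moishezon; the descent version is shorter once the general descent theorem is available.

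A few steps deserve one line each:
\begin{itemize}
\item \emph{Exactly the map} in Step 3 uses $\pi_*\cO_{\overline{X}}=\cO_{\overline{X}'}$. This gives $H^0(\overline{X},\pi^*L'^{\otimes N})=H^0(\overline{X}',L'^{\otimes N})$, so $|L^{\otimes N}|$ on $\overline{X}$ really is the pullback of the embedding of $\overline{X}'$.
\item Take $X'$ to be the scheme-theoretic image of $\phi$. Its formation commutes with flat base change, and that is what gives $X'\otimes_k\overline{k}\cong\overline{X}'$.
\item The \emph{composition of $n$ blow-ups} holds over $\overline{k}$. Over $k$ itself, $f$ is the blow-up of $X'$ in the reduced zero-dimensional subscheme $f(D)$, one closed point per Galois orbit. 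This follows from the universal property of blowing up together with compatibility of blow-ups with flat base change.
\item If $k$ is imperfect, you should note that every $C_i$ is already defined over $k^s$. Indeed $H^0(C_i,N_{C_i/\overline{X}})=H^0(\bP^1,\cO(-1))=0$, so $[C_i]$ is an unramified point of the Hilbert scheme. This is what makes the orbit sums pullbacks of reduced $k$-divisors. For the finite fields used in the paper the point is vacuous.
\end{itemize}
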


We finish this section with a theorem due to Weil \cite[Theorem 23.1]{manincubic}.

\begin{theo}\label{Theorem Weil number of points}
    Let $X$ be a smooth projective geometrically rational surface over a finite field $\bF_q$. Let $\sigma^*\colon \Pic(\overline{X}) \to \Pic(\overline{X})$ be the induced action of the Frobenius automorphism on the Picard group. Then, for the number $N$ of $\bF_q$-rational points on $X$, we have 
    \[N = q^2+\mathrm{Tr}(\sigma^*)q+1.\]
\end{theo}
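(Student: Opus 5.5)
This is the Lefschetz trace formula for $\ell$-adic étale cohomology, applied to the geometric Frobenius; the work is computing the cohomology of a geometrically rational surface. Fix a prime $\ell \neq p$. By the Grothendieck--Lefschetz trace formula,
\[N=\sum_{i=0}^{4}(-1)^i\,\mathrm{Tr}\bigl(F \mid H^i_{\text{ét}}(\overline{X},\bQ_\ell)\bigr),\]
where $F$ is the geometric Frobenius. The macro \verb|\bQ| is not defined in the paper, so in the written proof I would spell this as $\mathbb{Q}_\ell$. The plan has three steps: compute each cohomology group of $\overline{X}$, determine the Frobenius action on it, and add up the contributions.

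\emph{Step 1: the groups $H^0$, $H^4$, $H^1$, $H^3$.} Since $\overline{X}$ is connected, $H^0=\mathbb{Q}_\ell$ with trivial action, contributing $1$. By Poincaré duality, $H^4=\mathbb{Q}_\ell(-1)$, on which $F$ acts by $q$; since $\dim X=2$, this contributes $q^2$. The groups $H^1$ and $H^3$ vanish, because both are birational invariants of smooth projective surfaces: $H^1$ is governed by $\mathrm{Pic}^0$ and the Albanese, and $H^3$ is dual to $H^1$. For $\bP^2$ both are zero, so they vanish for any surface geometrically birational to $\bP^2$.

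\emph{Step 2: the group $H^2$.} I would show that the cycle class map
\[\Pic(\overline{X})\otimes\mathbb{Q}_\ell(-1)\to H^2(\overline{X},\mathbb{Q}_\ell)\]
is an isomorphism. It is injective, since numerical equivalence is detected by the cup product pairing and $\Pic(\overline{X})$ is torsion-free with nondegenerate intersection form in the rational case. It is surjective because the Brauer group of a rational surface is a birational invariant equal to $0$, so the Kummer sequence gives $\Pic\otimes\mathbb{Z}_\ell\cong H^2(\mathbb{Z}_\ell(1))$. Alternatively, the two sides have equal dimension: $b_2=\rho$ because $b_2-\rho$ is a birational invariant of smooth surfaces, which is zero for $\bP^2$ (every blow-up raises both by one). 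The cycle class map is Galois-equivariant, and geometric Frobenius acts on $\Pic(\overline{X})$ through $\sigma^*$, up to the identification of the arithmetic versus geometric Frobenius on divisor classes; these agree since the action permutes classes of divisors. The Tate twist then multiplies the trace by $q$, so $\mathrm{Tr}(F\mid H^2)=q\,\mathrm{Tr}(\sigma^*)$.

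\emph{Step 3: conclusion.} Summing the contributions gives $N=1+q\,\mathrm{Tr}(\sigma^*)+q^2$, as claimed.

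The main obstacle is Step 2: identifying $H^2$ with $\Pic\otimes\mathbb{Q}_\ell(-1)$ and checking that the induced Frobenius action matches $\sigma^*$ with the correct twist. For del Pezzo surfaces, which is the case used here, this can be made concrete via Theorem \ref{Theorem del pezzo alg closed field points gen pos}: $\overline{X}$ is $\bP^2$ blown up in points, or $\bP^1\times\bP^1$. The blow-up formula then gives $H^2=\mathbb{Q}_\ell(-1)^{r+1}$, spanned by the classes $\sL_0,\dots,\sL_r$ of Proposition \ref{Proposition Pic is N_r}, which avoids invoking the general birational-invariance arguments.
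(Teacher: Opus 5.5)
The paper gives no proof of this statement; it only cites Weil's theorem via Manin. Your argument (Grothendieck--Lefschetz trace formula, $H^1=H^3=0$, and $H^2(\overline{X},\mathbb{Q}_\ell)\cong\Pic(\overline{X})\otimes\mathbb{Q}_\ell(-1)$ as Galois modules) is the standard proof of that result and is correct in substance.

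There are two slips to fix:
\begin{enumerate}
\item $H^4(\overline{X},\mathbb{Q}_\ell)\cong\mathbb{Q}_\ell(-2)$, not $\mathbb{Q}_\ell(-1)$. That is why geometric Frobenius acts on it by $q^2$; as you wrote it, the action would be by $q$ and the contribution would be $q$, which contradicts your own conclusion.
\item Arithmetic and geometric Frobenius are mutually inverse elements of the Galois group, and in general they do not act identically on $\Pic(\overline{X})$ (for example, when the image is an element of order $3$ in $W(E_8)$). So ``these agree since the action permutes classes of divisors'' is not a valid justification. What is true, and all you need, is $\mathrm{Tr}\bigl((\sigma^*)^{-1}\bigr)=\mathrm{Tr}(\sigma^*)$. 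This holds because $\sigma^*$ preserves the nondegenerate intersection form, so $(\sigma^*)^{-1}$ is its adjoint. Alternatively, $\sigma^*$ has finite order, so its eigenvalues are roots of unity and its integer trace equals its complex conjugate.
\end{enumerate}
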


\subsection{Anticanonical Systems}\label{sec: antican systems}

Let $X$ be a del Pezzo surface with anticanonical divisor $-K_X$. A detailed study of the linear systems $|-nK_X|$ yields two interesting applications for del Pezzo surfaces of degree 1 and 2: Firstly, they enable us to deduce concrete equations for del Pezzo surfaces. Secondly, we can realize in this way a degree 2 (resp. 1) del Pezzo surface as a double cover of $\bP^2$ (resp. a quadratic cone in $\bP^3$), which can be used to construct a twist of the surface. 

For the following theorem, we refer to \cite[Chapter III.3]{Kollar}.

\begin{theo}
    Let $X$ be a del Pezzo surface over a field $k$. Then the following holds:
    \begin{enumerate}
        \item If $K_X^2=2$, then $X$ is isomorphic to a smooth hypersurface of degree $4$ in $\bP_k(1,1,1,2)$. Conversely, every such hypersurface is a del Pezzo surface of degree $2$. 
        \item If $K_X^2=1$, then $X$ is isomorphic to a smooth hypersurface of degree $6$ in $\bP_k(1,1,2,3)$. Conversely, every such hypersurface is a del Pezzo surface of degree $1$.  
    \end{enumerate}
\end{theo}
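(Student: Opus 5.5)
The plan is to use the graded anticanonical ring $R(X)=\bigoplus_{n\ge 0}H^0(X,\omega_X^{-n})$ and show that $X\cong\mathrm{Proj}\,R(X)$ is the stated weighted hypersurface. Since $\omega_X^{-1}$ is ample, $X\cong \mathrm{Proj}\,R(X)$, so everything reduces to computing generators and relations of $R(X)$ in low degrees. By Proposition \ref{Proposition del pezzo well behaved base change} and flat base change, $h^0(\omega_X^{-n})$ can be computed over $\overline{k}$; generators chosen over $k$ stay generators after base change. We may therefore pick generators over $k$ and work over $\overline{k}$ only when dimension counts are needed.

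First I compute dimensions. Kodaira vanishing can fail in positive characteristic, so I would instead use the rationality of $\overline{X}$ (Theorem \ref{Theorem del pezzo alg closed field points gen pos}) to get $h^1(\cO)=h^2(\cO)=0$. Together with Serre duality, $h^2(\omega_X^{-n})=h^0(\omega_X^{n+1})=0$ for $n\ge0$, since $\omega_X^{-1}$ is ample and so $\omega_X$ has negative degree against it. The vanishing of $h^1(\omega_X^{-n})$ I would get from an argument on a smooth member of the anticanonical system (a curve of arithmetic genus $1$, by adjunction) combined with induction on $n$ via the restriction sequence $0\to\omega_X^{-(n-1)}\to\omega_X^{-n}\to\omega_X^{-n}|_C\to0$. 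Riemann--Roch then gives $h^0(\omega_X^{-n})=1+\tfrac{n(n+1)}{2}d$.

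For $d=2$ this gives dimensions $3,7,13,\dots$ in degrees $1,2,3,\dots$. Degree 1 gives $x,y,z$, and there are $6$ quadratic monomials in them, so one new generator $w$ appears in degree 2. In degree 4, the monomials in $x,y,z$ (weight 1) and $w$ (weight 2) number $15+6+1=22$, while $h^0=21$, so there is exactly one quartic relation. For $d=1$ the dimensions are $2,4,7,11,16,22,\dots$. This gives $x,y$ in degree 1, a new $z$ in degree 2 (since $3<4$), and a new $w$ in degree 3 (the monomials give $4+2=6<7$). In degree 6 the weighted monomials number $23$ against $h^0=22$, which gives a single sextic relation.

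The main obstacle is showing that no further generators are needed, i.e. that the multiplication maps are surjective in high degree, and that the quartic or sextic relation generates the whole ideal. Surjectivity I would prove by restricting to the genus-$1$ curve $C$ from the previous step, where the restricted line bundles have degree $d$. On $C$ the section ring is explicitly known (Weierstrass-type presentation), and the surjectivity of $H^0(X,\cdot)\to H^0(C,\cdot)$ coming from the $h^1$-vanishing lifts generation from $C$ to $X$.

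With generation in hand, $R(X)$ is a quotient of $k[x,y,z,w]$ (respectively of $k[x,y,z,w]$ with weights $1,1,2,3$). The Hilbert series of the hypersurface ring cut out by one relation of degree $4$ (respectively $6$) matches $1+\tfrac{n(n+1)}{2}d$, which forces the kernel to be principal. Smoothness of the hypersurface is then just smoothness of $X$; one also checks that it avoids the singular points of the weighted projective space.

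For the converse, a smooth hypersurface $V\subset\bP(1,1,1,2)$ of degree $4$ (respectively $V\subset\bP(1,1,2,3)$ of degree $6$) does not meet the singular points of the ambient space. Adjunction for weighted hypersurfaces gives $\omega_V=\cO_V(4-5)=\cO_V(-1)$ (respectively $\cO_V(6-7)=\cO_V(-1)$), which is ample. The self-intersection is then $\cO(1)^2\cdot V=4/2=2$ (respectively $6/6=1$). Geometric integrality holds because $V$ is a smooth hypersurface, hence connected, in dimension $2$.
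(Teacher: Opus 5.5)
Your proof is correct. The paper gives no proof of its own and simply cites Koll\'ar, Chapter III.3, and your outline is the standard argument for this result: vanishing of $H^1(X,\omega_X^{-n})$, restriction to an anticanonical curve of arithmetic genus one, lifting the Weierstrass-type presentation of that curve's section ring, and matching Hilbert series.

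The one point to tighten is your use of a \emph{smooth} member of $|-K_X|$. In characteristics $2$ and $3$ Bertini does not give its existence, since these systems are not very ample; for $d=1$ you would have to rule out a quasi-elliptic anticanonical pencil. You can avoid the issue altogether. For $d\le 2$, every $C\in|-K_X|$ over $\overline{k}$ is reduced (integral if $d=1$) and Gorenstein, with $\omega_C\cong\cO_C$ and $h^0(\cO_C)=1$. For such curves both the vanishing of $H^1(C,\omega_X^{-n}|_C)$ for $n\ge 1$ and the required presentation of the section ring of $\omega_X^{-1}|_C$ still hold.
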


The first part of the next proposition can be found in \cite[Surfaces de Del Pezzo - V.]{demazure}. For the second part, we refer to \cite[Proposition 3.1 and Proposition 5.1]{gebhardchar2}.

\begin{prop}\label{Proposition double cover}
    Let $X$ be a del Pezzo surface of degree $2$ (resp. degree $1$). Then $|-K_X|$ (resp. $|-2K_X|$) induces a finite morphism $X \to \bP^2$ (resp. $X \to Q \subseteq \bP^3$, where $Q$ is a quadratic cone in $\bP^3$) of degree $2$. The morphism is separable in all characteristics.
\end{prop}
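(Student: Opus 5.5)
The plan is to work with the explicit hypersurface models from the preceding theorem and read everything off the equations.

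\emph{Degree $2$.} Write $X=V(F)\subseteq \bP_k(1,1,1,2)$ with coordinates $x,y,z$ of weight $1$ and $w$ of weight $2$. After scaling, $F=w^2+w f_2(x,y,z)+f_4(x,y,z)$; the coefficient of $w^2$ is nonzero, since otherwise the point $(0:0:0:1)$ would lie on $X$ and be singular. Then $H^0(X,-K_X)=H^0(X,\cO(1))$ is spanned by $x,y,z$. This system is base point free: $x=y=z=0$ forces $w^2=0$, which is not a point of the weighted projective space. So $|-K_X|$ gives the projection $(x:y:z:w)\mapsto(x:y:z)$. Over a point of $\bP^2$, $w$ satisfies a monic quadratic equation. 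Hence the map is quasi-finite and proper, so it is finite. It has degree $2$, because $k(X)=k(x/z,y/z)[w/z^2]$ with $w/z^2$ a root of a quadratic that is irreducible: otherwise $X$ would be reducible.

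\emph{Degree $1$.} Take $X=V(F)\subseteq\bP_k(1,1,2,3)$ with weights $(1,1,2,3)$ on $(x,y,z,w)$. As before, normalize
\[F=w^2+w\bigl(a_3(x,y)+z\,a_1(x,y)\bigr)+g(x,y,z).\]
The monomial $z^3$ occurs in $g$ with nonzero coefficient, since otherwise $(0:0:1:0)$ is a singular point. The space $H^0(\cO(2))$ is spanned by $x^2,xy,y^2,z$. The resulting map lands in the cone $Q=\{s_0s_2=s_1^2\}\subseteq\bP^3$ and is base point free, since $x=y=z=0$ forces $w=0$. This map is surjective onto $Q$, and over each point of $Q$ the coordinate $w$ satisfies a monic quadratic. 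So it is finite of degree $2$, by the same field-theoretic argument as in degree $2$.

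\emph{Separability.} The field extension $k(X)/k(\bP^2)$ (resp.\ $/k(Q)$) is generated by a root of the quadratic above. It is therefore inseparable only if $\mathrm{char}(k)=2$ and the linear term in $w$ vanishes identically. So it remains to show that in characteristic $2$ a surface $w^2=f_4(x,y,z)$ (resp.\ $w^2=g(x,y,z)$) is singular. In that case $\partial F/\partial w=0$, and a point is singular exactly when all partials of $f_4$ (resp.\ $g$) in the remaining variables vanish there. Given such a point of the base, we can choose the unique $w$ with $w^2=f_4$ (resp.\ $g$), since $\overline{k}$ is perfect, and obtain a singular point of $\overline{X}$.

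For degree $2$ the existence of that base point comes from Euler's relation: in characteristic $2$ we have $x\partial_xf_4+y\partial_yf_4+z\partial_zf_4=4f_4=0$. Take a common zero of the two cubics $\partial_xf_4,\partial_yf_4$ in $\bP^2$. If $z\neq0$ there, Euler's relation gives $\partial_zf_4=0$ as well. If every such common zero has $z=0$, we instead intersect a different pair of partials and use a linear change of coordinates to arrange genericity.

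The degree $1$ case is the same argument on $\bP(1,1,2)$, using the weighted Euler relation $x\partial_x g+y\partial_y g+2z\partial_z g=6g$. In characteristic $2$ this reduces to $x\partial_xg+y\partial_yg=0$. So we need a common zero of $\partial_xg$ and $\partial_zg$, from which $\partial_yg=0$ follows wherever $y\neq0$. The main obstacle is exactly this common-zero argument: we must make sure the common zero is not at a bad point such as $y=0$, or the cone vertex. I expect to settle it by a dimension count together with a generic coordinate change, falling back on the cited references \cite{gebhardchar2} if needed.
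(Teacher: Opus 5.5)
Your derivation of finiteness and degree $2$ from the weighted hypersurface models is fine. So is the reduction of separability to the claim that, in characteristic $2$, the surface $w^2=f_4(x,y,z)$ (resp.\ $w^2=g(x,y,z)$) is never smooth. The paper gives no argument here and only cites the literature.

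The genuine gap is the existence of a zero of the differential, and your proposed fallback cannot close it. In coordinates $(x',y',z')$, the partials $\partial_{x'}f_4$ and $\partial_{y'}f_4$ are derivatives along vectors spanning $\ker z'$. So at a common zero $p$ one has $(df_4)_p=\lambda z'$. Euler's relation gives $(df_4)_p(p)=4f_4(p)=0$, hence $\lambda=0$ whenever $z'(p)\neq 0$. Thus ``some common zero of $\partial_{x'}f_4,\partial_{y'}f_4$ lies off $\{z'=0\}$'' is \emph{equivalent} to ``$df_4$ vanishes somewhere'', which is exactly what you need to prove. If $df_4$ had no zero, every such common zero would lie on $\{z'=0\}$ in every coordinate system, and no generic change of coordinates helps. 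So the fallback is circular, and a global input is needed. For instance, by the twisted Euler sequence $0 \to \Omega_{\bP^2}(4) \to \cO(3)^{3} \to \cO(4) \to 0$, the triple $(\partial_xf_4,\partial_yf_4,\partial_zf_4)$ is a section of the rank-$2$ bundle $\Omega_{\bP^2}(4)$. Since $c_2(\Omega_{\bP^2}(4))=3-12+16=7\neq 0$, this section must vanish somewhere.

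A derivative-free fix handles both degrees at once. Suppose $\pi\colon \overline X \to Y$, with $Y=\bP^2$ (resp.\ $Y=Q$), were purely inseparable of degree $2$. Then $k(\overline X)^2\subseteq k(Y)$, and normality of $Y$ shows that squares of local functions on $\overline X$ are local functions on $Y$. So the absolute Frobenius of $\overline X$ factors as $g\circ\pi$. Hence $L^{\otimes 2}\cong \pi^*g^*L$ for every line bundle $L$, so $2\,\Pic(\overline X)\subseteq \pi^*\Pic(Y)$ and $\rho(\overline X)\le 1$. This contradicts $\rho(\overline X)=8$ (resp.\ $9$). Equivalently, $\pi$ would be a universal homeomorphism, forcing $b_2(\overline X)=b_2(Y)=1$.

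In degree $1$ you can also finish within your own framework. In characteristic $2$ the weighted Euler relation reads $xg_x+yg_y=0$, which forces $g_x=yh$ and $g_y=xh$ for a weighted form $h$ of degree $4$. So in the charts $x\neq 0$ and $y\neq 0$, the singular points lie exactly over $V(h)\cap V(g_z)\subseteq \bP(1,1,2)$. This set is nonempty, since two curves on $\bP(1,1,2)$ meet, and if $h=0$ you can use $V(g_z)$. It also avoids the vertex, because $g_z(0,0,1)$ is the nonzero coefficient of $z^3$. This disposes of your worry about $y=0$ and the cone vertex.
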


The double covers from the above proposition induce an involutional automorphism of a degree 2 (resp. degree 1) del Pezzo surface $X$, called the {\bf Geiser} (resp. {\bf Bertini}) {\bf involution}. We have that $\Aut(X)$ injects into $W(E_7)$ (resp. $W(E_8)$), cf.\ \cite[Chapter 8.2.8]{dolgachevclassical}. The proof does not depend on the characteristic of the ground field. It can be shown that the image of the Geiser (resp. Bertini) involution in $W(E_7)$ (resp. $W(E_8)$) corresponds to the unique involution in the center of $W(E_7)$ (resp. $W(E_8)$), cf.\ \cite[Chapter 8.7.2 and 8.8.2]{dolgachevclassical}, which again does not depend on the characteristic of the ground field. This involution has the following properties, see again \cite{dolgachevclassical}. 

\begin{prop}\label{Proposition geiser bertini intersection -id}
 Let $w_0\in W(E_7)$ (resp. $W(E_8)$) be the unique central involution. Then $w_0=-\id_{E_7}$ (resp. $-\id_{E_8}$). Let $X$ be a degree $2$ del Pezzo surface, $L$ a $(-1)$-curve, and denote the Geiser involution by $\gamma$. A straightforward calculation shows that $(L,\gamma(L))=2$. 
\end{prop}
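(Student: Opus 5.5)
The statement to prove has two parts. The central involution $w_0$ of $W(E_7)$ (resp. $W(E_8)$) equals $-\id$ on the root lattice. For a $(-1)$-curve $L$ on a degree $2$ del Pezzo surface with Geiser involution $\gamma$, we must show $(L,\gamma(L))=2$.

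For the first part, I would work inside $N_r$ with $V=K_r^\perp\otimes\bR$ as in Theorem \ref{Theorem root systems and R_r}. The standard fact is that the longest element of the Weyl group of $E_7$ or $E_8$ acts as $-\id$ on $V$, since these root systems have no nontrivial diagram automorphism compatible with $-1$; for $E_7$ and $E_8$ the opposition involution is trivial. Hence $-\id_V$ lies in $W$. It is central because it commutes with every linear map, and it is an involution. Since $w_0$ is the unique central involution, $w_0=-\id_V$. Extended to $N_r$, it fixes $K_r$ and acts as $-1$ on $K_r^\perp$.

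For the second part, I would use the fact quoted above that $\gamma$ acts on $\Pic(\overline X)$ as $w_0$. Write $L=\tfrac{1}{2}K_X\cdot\frac{(L,K_X)}{1}+v$, or more concretely $L=aK_X+v$ with $v\in K_X^\perp\otimes\bQ$. Here $a=(L,K_X)/K_X^2=-1/2$, since $K_X^2=2$ and $(L,K_X)=-1$. Then $\gamma(L)=aK_X-v$, so $L+\gamma(L)=2aK_X=-K_X$. Alternatively, $L+\gamma(L)$ is $\gamma$-invariant, hence a multiple of $K_X$, and intersecting with $K_X$ gives $-K_X$. It follows that
\[(L,\gamma(L))=(L,-K_X-L)=1-(-1)=2.\]

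The only nontrivial input is $-\id\in W(E_7),W(E_8)$. If needed, I would verify this directly for $E_8$ via the unimodularity and the Coxeter element, or simply cite Bourbaki's tables. This is the main obstacle, but it is standard. The computation in the second part is routine once this is known.
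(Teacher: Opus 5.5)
Your proposal is correct and is essentially what the paper intends. The paper cites Dolgachev for $w_0=-\id$, which you justify via the longest element (equivalently, $c^{h/2}=-\id$ for a Coxeter element $c$, since all exponents of $E_7$ and $E_8$ are odd), and it calls the rest a straightforward calculation, which is exactly your decomposition $L=-\tfrac12 K_X+v$ with $v\in K_X^\perp\otimes\mathbb{Q}$, giving $L+\gamma(L)=-K_X$ and hence $(L,\gamma(L))=2$; just delete the garbled first expression for $L$ in favour of $L=aK_X+v$.
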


For the ramification locus of the double cover $X \to \bP^2$, we have the following lemma, cf.\ \cite[Lemma 4.1]{Loughran19}.

\begin{lem}\label{Lemma ramification curve}
    Let $X$ be a del Pezzo surface of degree $2$ over an algebraically closed field $k$ with ramification locus $R$. The following holds:
    \begin{enumerate}
        \item If $\mathrm{char}(k) \ne 2$, then $R$ is a smooth curve of genus $3$. 
        \item If $\mathrm{char}(k)=2$, then $R$ has genus $0$ and at most two irreducible components. 
    \end{enumerate}
\end{lem}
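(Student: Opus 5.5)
The plan is to work with an explicit model of $X$ and study the ramification of the double cover. By the theorem on anticanonical systems, $X$ is a smooth quartic hypersurface in $\bP_k(1,1,1,2)$. Completing the square in the weight-$2$ variable $w$ gives a normal form, and the ramification locus $R \subseteq \bP^2$ of the double cover $X \to \bP^2$ from Proposition~\ref{Proposition double cover} can then be read off from that form. The two statements of the lemma correspond to the two characteristic regimes, so the proof splits accordingly.

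In characteristic $\ne 2$, one can complete the square and write $X = V(w^2 - f_4(x,y,z))$ with $f_4$ a plane quartic. The branch locus is $V(f_4)\subseteq \bP^2$, and the ramification curve $R \subseteq X$ is $V(w, f_4)$, which maps isomorphically onto $V(f_4)$. By the Jacobian criterion, $X$ is smooth exactly when $V(f_4)$ is smooth. Indeed, at a point with $w=0$ the partials in $x,y,z$ are those of $f_4$. At points with $w\ne 0$ the partial derivative $2w$ does not vanish, so those points are automatically smooth. Hence $R$ is a smooth plane quartic, and by the genus–degree formula it has genus $(4-1)(4-2)/2=3$.

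In characteristic $2$ the square cannot be completed. The general form is $w^2 + g_2(x,y,z)\,w + f_4(x,y,z)=0$, with the cover separable, so $g_2\ne 0$. The partial derivative with respect to $w$ is $g_2$, so the ramification locus lies over the conic $V(g_2)\subseteq\bP^2$. More precisely, $R$ maps bijectively onto that conic, since on it the equation becomes $w^2 = f_4$, which has a unique solution for $w$ in characteristic $2$. Thus $R$ is, up to its reduced structure, homeomorphic to a plane conic, possibly degenerate. It remains to use smoothness of $X$ to exclude a double line. If $g_2=\ell^2$, then the Jacobian conditions on $V(\ell)$ reduce to $\partial f_4$ having a common zero with $\ell$ and the equation. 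A dimension count shows that such a point always exists: there is one condition too few, so a singular point is forced. Hence $V(g_2)$ is either a smooth conic of genus $0$ or a union of two distinct lines, and in particular it has genus $0$ and at most two irreducible components.

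The step I expect to be the main obstacle is the characteristic-$2$ case, and specifically ruling out $g_2$ being a square. There I would first try to compute explicitly at the point of $V(\ell)$ where $\partial_x f_4=\partial_y f_4=\partial_z f_4=0$ restricted suitably, or else cite \cite[Lemma 4.1]{Loughran19} directly for this step.
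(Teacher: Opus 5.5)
The paper gives no argument for this lemma; it only cites \cite[Lemma 4.1]{Loughran19}. Your characteristic $\neq 2$ argument is fine. The characteristic-$2$ reduction is also fine: ramification happens exactly over $V(g_2)$, and $R_{\mathrm{red}}\to V(g_2)_{\mathrm{red}}$ is a finite bijective morphism.

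\textbf{The gap.} The step you flagged cannot be carried out, because the claim is false: smoothness of $X$ does \emph{not} exclude $g_2=\ell^2$. In characteristic $2$ take $F=w^2+z^2w+x^3y+y^3z$. Then $\partial_wF=z^2$, $\partial_zF=y^3$, $\partial_yF=x^3+y^2z$ and $\partial_xF=x^2y$. A common zero forces $z=0$, then $y=0$, then $x=0$, and $F=0$ gives $w=0$. So $V(F)\subseteq\bP(1,1,1,2)$ is a smooth del Pezzo surface of degree $2$, yet its conic is the double line $V(z^2)$.

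Your dimension count runs the wrong way. Take $\ell=z$. In characteristic $2$ the Euler relation gives $x\,\partial_xf_4+y\,\partial_yf_4=0$ on $V(z)$. Hence $\partial_xf_4|_{V(z)}=y\,h$ and $\partial_yf_4|_{V(z)}=x\,h$ for a binary quadric $h$, which is even a square. A singular point therefore needs both $h=0$ and $\partial_zf_4=0$ at the same point of $\bP^1$. That is two conditions on a line, so for general $f_4$ there is no singular point. Neither an explicit computation nor a citation can supply this step.

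\textbf{The fix.} Simply drop the exclusion; the lemma does not need it. If $g_2=\ell^2$, then $R_{\mathrm{red}}$ is homeomorphic to the line $V(\ell)$, so it is irreducible. That is still within ``at most two irreducible components''.

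What you do need to add is a real argument for ``genus $0$''. A homeomorphism says nothing about genus. Moreover, the ramification divisor itself is cut out by $g_2$, so it lies in $|-2K_X|$ and has arithmetic genus $1+\tfrac12(8-4)=3$. So the statement must be read as a statement about the geometric genus of the components of $R_{\mathrm{red}}$ (equivalently, about the branch conic). To prove it: each component of $R_{\mathrm{red}}$ maps bijectively onto a line or a smooth conic $\cong\bP^1$. That map is therefore purely inseparable of degree at most $2$. Hence the function field of the component lies in $k(t)^{1/2}=k(t^{1/2})$, and it is rational by L\"uroth.

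With the exclusion removed and the genus statement made precise in this way, your argument proves the lemma.
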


The Geiser (resp. Bertini) involution can be used to twist a del Pezzo surface $X$ of degree 2 (resp. 1) over $\bF_q$. Recall that a twist of a variety $X$ over a field $k$ is a variety $X'$ over $k$ such that $X \cong X'$ over $\overline{k}$.

Let $X$ be a del Pezzo surface of degree 2 (resp. 1) over $\bF_q$. Then the group $\bZ/2\bZ$ acts faithfully on $X$ and $\mathrm{Pic}(\overline{X})$ via the Geiser (resp. Bertini) involution. By \cite[Proposition 4.4]{trepalinmincubic}, this action induces a nontrivial twist $X'$ of $X$. This twist is called the \emph{Geiser} (resp. \emph{Bertini}) \emph{twist}. 

Furthermore, let $w$ and $w'$ be the elements in the Weyl group associated with the types of $X$ and $X'$. Then $w$ as well as $w'$ can be considered as linear maps of the vector space containing $E_7$ (resp. $E_8$). It is a consequence of \cite[Proposition 4.4]{trepalinmincubic} that the eigenvalues of the characteristic polynomial of $w'$ can be obtained by multiplying the eigenvalues of the characteristic polynomial of $w$ by $-1$.

In particular, for a solution of the inverse Galois problem, we only have to classify types up to the Bertini twist. We have computed the Bertini twists for each type, and the data can be found in Table \ref{Table WE8} in the appendix. Observe that the behavior of the Geiser and Bertini twist is slightly different. Whereas in the first case, the Geiser twist divides the 60 conjugacy classes into 30 pairs, cf.\ \cite{trepalin20}, this is not true for the Bertini twist. There are several types where the Bertini twist has the same type again. Also, a few types cannot be distinguished just by the eigenvalues. It turns out that in these cases, the Bertini twist of such a type is again the type itself. This will be a consequence of Theorem \ref{main theo 1}. 

We end this section with a result about blow-ups of del Pezzo surfaces. 

\begin{prop}\label{Proposition when is blow up dP}
    Let $X$ be a del Pezzo surface over a field $k$ of degree $d\geq 3$. The blow-up of a $k$-rational point $p$ is a del Pezzo surface of degree $d-1$ if and only if $p$ does not lie on the $(-1)$-curves. If $X$ is a del Pezzo surface of degree $2$ then the blow-up of a $k$-rational point is a del Pezzo surface of degree $1$ if and only if $p$ lies neither on the $(-1)$-curves nor on the ramification curve. 
\end{prop}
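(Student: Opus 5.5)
We work over $\overline{k}$: by Proposition \ref{Proposition del pezzo well behaved base change}, $Y=\mathrm{Bl}_pX$ is del Pezzo if and only if $\overline{Y}$ is, and a $k$-rational $p$ stays a single point after base change. The tool throughout is Theorem \ref{Theorem del pezzo alg closed field points gen pos}: since $d\le 7$ in the relevant range ($d=8,9$ handled separately), $\overline{X}$ is the blow-up of $r=9-d$ points $p_1,\dots,p_r$ in general position. Hence $\overline{Y}$ is the blow-up of $p_1,\dots,p_r,p'$, where $p'\in\bP^2$ is the image of $p$; $p'$ is distinct from the $p_i$ exactly when $p$ avoids the exceptional curves $E_i$. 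In that case $\overline{Y}$ is del Pezzo if and only if $p_1,\dots,p_r,p'$ are in general position, by the same theorem together with the converse that del Pezzo surfaces of degree $\le 7$ arise only this way. If $p$ lies on some $E_i$, the blow-up is of infinitely near points. There the strict transform of $E_i$ has self-intersection $-2$, so $-K_Y\cdot \widetilde{E_i}=0$ by adjunction and $-K_Y$ is not ample. The same argument, using $\pi^*C-E$, applies whenever $p$ lies on any $(-1)$-curve $C$: then $(\widetilde C)^2=-2$ and $-K_Y\cdot\widetilde C=0$. This gives the necessity direction uniformly.

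For sufficiency when $d\ge 3$, I would check that each failure of general position for $p_1,\dots,p_r,p'$ involving $p'$ means $p'$ lies on one of the curves of Theorem \ref{Theorem bij exc class curves, strict trafos of lines} for the $r$-point configuration. A line through $p'$ and two $p_i$ is type (2). A conic through $p'$ and five $p_i$ is type (3). For $r+1\le 8$, a cubic through eight of the points singular at one of them splits into two cases. If the singular point is $p'$, the pencil of cubics through the seven $p_i$ has exactly one member singular at a prescribed point. Seven points in general position impose independent conditions, and I would need to argue that the curves singular somewhere form the degree 2 case (the ramification/Geiser picture), which only matters when $r=7$. If the singular point is a $p_i$, the cubic is of type (4), passing through $p'$. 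So for $r\le 6$ (i.e.\ $d\ge3$), $p'$ in general position is equivalent to $p$ not lying on any $(-1)$-curve, which is exactly the first statement.

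For $d=2$ ($r=7$), the extra condition is the eighth one: some cubic through all eight points singular at $p'$. The cubics through $p_1,\dots,p_7$ form a net that defines the anticanonical map $\overline{X}\to\bP^2$. A member singular at $p'$ exists precisely when the differential of this map at $p'$ drops rank, i.e.\ $p$ lies on the ramification curve of Proposition \ref{Proposition double cover}. Cubics singular at some $p_i$ through $p'$ are again the type (4) curves. Quartics, quintics and sextics only appear for eight points, so they impose no further conditions; their role is already covered by the $(-1)$-curves of $Y$, which should not be confused with conditions on $X$.

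I expect the main obstacle to be this identification of "singular cubic of the net at $p'$" with the ramification locus, especially in characteristic 2. There I would work directly with the Jacobian criterion for the degree 2 separable map instead of using a genus-based description via Lemma \ref{Lemma ramification curve}.
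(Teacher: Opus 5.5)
Your argument is correct in substance. For the first statement it is the paper's route: the paper calls it an obvious corollary of Theorem \ref{Theorem bij exc class curves, strict trafos of lines}, i.e.\ the failures of general position of $p_1,\dots,p_r,p'$ that involve $p'$ are exactly $p'$ lying on the plane images of the $(-1)$-curves of $\overline{X}$. For the degree $2$ statement the paper gives no argument and cites \cite[Corollary 14]{unirationaldelpezzodegree2}. Your version is self-contained instead. You identify the ramification curve with the set of points $q$ at which some member of $|-K_X|$ is singular, i.e.\ some plane cubic through $p_1,\dots,p_7$ is singular at $p'$. Indeed, $a\,du_q+b\,dv_q=0$ has a nontrivial solution iff $d\phi_q$ is not an isomorphism. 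This only uses that the ramification locus of a finite morphism of smooth surfaces is where the differential degenerates. So it works verbatim in characteristic $2$, and the obstacle you anticipated does not really arise. What your route buys is independence from the reference and a uniform, characteristic-free proof; the paper's buys brevity.

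Three things need tidying.

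First, Theorem \ref{Theorem del pezzo alg closed field points gen pos} only says a del Pezzo surface is isomorphic to \emph{some} blow-up of points in general position. It does not say that the specific points $p_1,\dots,p_7,p'$ must be in general position. So your necessity argument for the ramification curve needs one more line. The simplest fix is the computation you already used for $(-1)$-curves. If $D\in|-K_X|$ is singular at $p$ with multiplicity $m\ge 2$, then $\widetilde D=\pi^*D-mE$ and
\[
-K_Y\cdot\widetilde D=(\pi^*(-K_X)-E)\cdot(\pi^*D-mE)=2-m\le 0,
\]
so $-K_Y$ is not ample.

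Second, the sentence claiming that ``the pencil of cubics through the seven $p_i$ has exactly one member singular at a prescribed point'' is wrong. These cubics form a net, and a member singular at a prescribed $p'$ exists only when $p'$ lies on the image of the ramification curve. For $d\ge 3$ one has $r+1\le 7$, so the cubic condition never arises. For $d=2$ your last paragraph states the correct fact, so this sentence should simply be deleted.

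Third, the cases $d=8,9$ are trivial but should be stated. In particular, for $\overline{X}\cong\bP^1\times\bP^1$ the blow-up of a point is the blow-up of $\bP^2$ in two distinct points.
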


\begin{proof}
    The first part of the proposition is an obvious corollary of Theorem \ref{Theorem bij exc class curves, strict trafos of lines}. For the second part, we refer to \cite[Corollary 14]{unirationaldelpezzodegree2}.
\end{proof}

\section{The inverse Galois problem}\label{sec: igp}

In this section, we prove Theorem \ref{main theo 1} and Theorem \ref{main theo 2}. In \S\! \ref{sec: pf struc}, we first discuss the structure of the proof. The proof is then outlined in \S\! \ref{sec: lower bound} to \S\! \ref{sec: sextics}. Lastly, we briefly consider the remaining open cases.  

\subsection{Proof structure}\label{sec: pf struc}

As already remarked in the introduction, we solve the inverse Galois problem for del Pezzo surfaces over finite fields for 85 of the 112 cases. More precisely, this solves the problem for all types that are either blow-ups of $\bP^2_{\bF_q}$ or contain a $\bF_q$-rational $(-1)$-curve, or are the Bertini twist of one of the latter two.

For those types that contain a rational $(-1)$-curve, we derive a lower bound for the existence over finite fields by using the already known solution to the inverse Galois problem for del Pezzo surfaces of degree 2. This will be done in \S\! \ref{sec: lower bound}. We are thus left to study the problem over relatively small finite fields. We present two methods for this:
\begin{enumerate}
    \item Blow-ups of $\bP^2_{\bF_q}$
    \item Sextics in $\bP_{\bF_q}(1,1,2,3)$
\end{enumerate}

In \S\! \ref{sec: blowups}, we consider all those types that can be realized as a blow-up of $\bP^2_{\bF_q}$. We recall in detail an algorithmic approach to find eight points in general position or prove their non-existence in $\bP^2_{\bF_q}$  that was already used in \cite{Loughran19}.

In \S\! \ref{sec: sextics}, we use the equations for del Pezzo surfaces of degree 1. The first step is to associate to each type data, which defines the type uniquely, and can be efficiently computed for a given equation. This data will be the number of $(-1)$-curves and rational points over the ground field and certain field extensions. The second step is to derive normal forms for the equations over small finite fields such that over each finite field, we have to consider as few equations as possible. A loop over these forms either yields an example of a del Pezzo surface of degree 1 of the desired type or shows the non-existence of that type over the specific finite field.  

Combining the lower bounds from Table \ref{Table lower bound}, the examples of degree 1 del Pezzo surfaces listed in \cite{Karras}, and the Magma code looping over all normal forms proves Theorem \ref{main theo 1} and \ref{main theo 2}. 

\begin{rema}
    In our thesis, we elaborated a third method to study certain types over small finite fields. The idea is to generalize the algorithm for points in general position in $\bP^2_{\bF_q}$ to non-split quadric surfaces. This indeed works out but, unfortunately, can only be used to solve the types 17, 33, 36, and 61 (and their Bertini twists). Since these types can also be solved using equations, we will not discuss this approach here in more detail.  
\end{rema}

\subsection{A lower bound}\label{sec: lower bound}

By Proposition \ref{Proposition when is blow up dP}, the blow-up of a del Pezzo surface of degree 2 in a $k$-rational point is a del Pezzo surface of degree 1 if and only if the point lies outside the ramification curve and the $(-1)$-curves. The goal of this section is to bound the number of $k$-rational points on the $(-1)$-curves and on the ramification curve.

\begin{prop}
    Let $X$ be a del Pezzo surface of degree $2$ over a finite field $\bF_q$. Let $n$ be the number of $\bF_q$-rational $(-1)$-curves and $m$ the number of Galois orbits of $(-1)$-curves of length $2$, $3$ and $4$. Then, for the number $B(X)$ of $\bF_q$-rational points on the $(-1)$-curves and on the ramification curve, we have
   \[B(X) \leq \begin{cases}
       n(q+1)+m+q+1+6\sqrt{q}, \: \text{if} \: q \: \text{is odd},\\
       n(q+1)+m+2(q+1), \: \text{if} \: q \: \text{is even}. 
   \end{cases}\]
\end{prop}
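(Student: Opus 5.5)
We split $B(X)$ into the rational points lying on some $(-1)$-curve and the rational points on the ramification curve $R$, and bound the two contributions separately. A union bound suffices; overlaps only make the true count smaller.

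\emph{Points on $(-1)$-curves.} Let $P\in X(\bF_q)$ lie on a $(-1)$-curve $L\subseteq \overline{X}$, and let $\sigma$ be the Frobenius. Since $P$ is fixed by $\sigma$, it lies on every curve in the Galois orbit of $L$. We distinguish cases by the orbit length.
\begin{enumerate}
\item If $L$ is defined over $\bF_q$, then $L\cong\bP^1_{\bF_q}$ and it carries exactly $q+1$ rational points. This gives the term $n(q+1)$.
\item If the orbit of $L$ has length $\ell\geq 2$, then $P$ lies on the intersection of the curves $L,\sigma(L),\dots$ of the orbit.
\begin{itemize}
\item For $\ell\geq 5$, we use that at most four $(-1)$-curves pass through a point (Proposition \ref{Proposition 4 lines one point}), so no rational point arises.
\item For $\ell\in\{2,3,4\}$, we show that all curves of the orbit meet in at most one point. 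For $\ell=2$: two distinct curves $L,L'$ with $(L,L')\in\{0,1,2\}$. If $(L,L')=2$, then $L'=\gamma(L)$ is the Geiser image, and $L\cap\gamma(L)$ lies over the line $\pi(L)$ in $\bP^2$, meeting $L$ in the points over the bitangency. We argue that a rational point here forces a contradiction, or gives at most one rational point: the two intersection points are swapped or fixed by Frobenius, and we show that only one of them can be rational, or handle this via the lines being Galois-conjugate. For $(L,L')\leq 1$ there is at most one intersection point. For $\ell=3,4$ the common intersection is contained in $L\cap\sigma(L)$, and using Proposition \ref{Proposition 4 lines one point} and the intersection numbers we show it is at most one point.
\end{itemize}
Each such orbit therefore contributes at most one rational point, giving the term $m$.
\end{enumerate}

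\emph{Points on the ramification curve.} For $q$ odd, $R$ is a smooth plane quartic of genus $3$ by Lemma \ref{Lemma ramification curve}, and it is defined over $\bF_q$ because it is canonically attached to $X$. It maps isomorphically onto its image in $\bP^2$. The Hasse--Weil bound then gives $\#R(\bF_q)\leq q+1+2g\sqrt q=q+1+6\sqrt q$. For $q$ even, Lemma \ref{Lemma ramification curve} says $R$ has genus $0$ and at most two components. Since $R$ is defined over $\bF_q$, it is either a single rational curve or two conjugate or rational components. Its image in $\bP^2$ is a conic or a union of at most two lines, so it has at most $2(q+1)$ rational points. If the components are conjugate, they contribute at most one point. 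If $R$ is a single rational curve (a conic, or possibly a double line), its normalization carries $q+1$ points, and we bound the points of $R$ by the points of its image in $\bP^2$. Summing the two contributions gives the stated bound.

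\emph{Main obstacle.} The expected difficulty is the length-$2$ orbit with $(L,\sigma L)=2$, where the two curves meet in two points. To rule out both points being rational, we would use the Geiser involution: $\sigma(L)=\gamma(L)$, and the two intersection points lie on $R$, over the two tangency points of the bitangent $\pi(L)$. If both are rational, they are already counted in the $R$-term. So the overlap lets this orbit contribute at most one \emph{new} point, or even zero, and the bound still holds.
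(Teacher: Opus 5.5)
Your proposal is correct and is essentially the paper's proof. Both use a union bound, Hasse--Weil on the ramification curve, and Proposition~\ref{Proposition 4 lines one point} to exclude orbits of length $\geq 5$ and to limit orbits of length $3,4$ to one point. For a length-$2$ orbit with $(L,\sigma L)=2$, both observe that $L\cap\gamma(L)$ lies on the ramification curve and is already counted there.

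Drop the tentative claim in your $\ell=2$ bullet that only one of the two intersection points can be rational: both can be, and your ``main obstacle'' paragraph is the correct resolution. For $\ell=3,4$, state explicitly that $(L,\sigma L)=2$ would force $\sigma L=\gamma(L)$ and hence $\sigma^2L=L$, so in fact $(L,\sigma L)\le 1$.
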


\begin{proof}
    The number of rational points on the ramification curve is bounded by $q+1+6\sqrt{q}$ and $2(q+1)$ by Lemma \ref{Lemma ramification curve} and the Hasse-Weil bound. Proposition \ref{Proposition 4 lines one point} yields that $(-1)$-curves belonging to an orbit of length greater than 4 cannot contain $\bF_q$-points. The same proposition also yields that an orbit of length 3 or 4 contains at most one $\bF_q$-point. An orbit of length 2 may contain one or two $\bF_q$-points. In the latter case, according to Proposition \ref{Proposition geiser bertini intersection -id}, these points must lie on the ramification curve and are thus encountered already in that part of the bound. The remaining part of the bound follows from the fact that a $\bF_q$-rational $(-1)$-curve is isomorphic to $\bP^1_{\bF_q}$ and hence has $q+1$ rational points. 
\end{proof}

\begin{rema}
    The basic idea of this bound is already outlined in \cite[\S\! 5.2]{Loughran19}. They call the points on the $(-1)$-curves and on the ramification curve the \emph{bad locus}. 
\end{rema}

The following theorem summarizes the results of the inverse Galois problem for degree 2 del Pezzo surfaces over finite fields, cf.\ \cite{trepalin20} and \cite{LoughranTrepalin20}. Our numbering of the types is compatible with that in these articles. 

\begin{theo}
    The following holds for the types of del Pezzo surfaces of degree $2$:
    \begin{itemize}
        \item Types $1$ and $49$ exist over $\bF_q$ if and only if $q \geq 9$.
        \item Types $5$ and $10$ exist over $\bF_q$ if and only if $q \geq 7$.
        \item Types $2$, $3$, $18$ and $31$ exist over $\bF_q$ if and only if $q \geq 5$. 
        \item Types $4$, $6$-$9$, $12$-$14$, $17$, $21$, $22$, $25$, $28$, $32$, $33$, $35$, $38$, $40$, $50$, $53$, $55$ and $60$ exist over $\bF_q$ if and only if $q \geq 3$.
    \end{itemize}
    All other types exist over $\bF_q$ for all $q$. 
\end{theo}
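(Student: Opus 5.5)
This theorem is a summary of the solution of the inverse Galois problem in degree $2$ due to Trepalin \cite{trepalin20} and Loughran--Trepalin \cite{LoughranTrepalin20}. The plan is therefore mainly to assemble the cited results and check that they match the numbering of the types, not to reprove everything. Still, here is how I would organize an independent verification along the lines of the methods in this paper.

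\textbf{Step 1: reduce by the Geiser twist.} The Geiser twist sends a type $w$ to the type whose eigenvalues are negated, i.e.\ to $-w = w_0 w$ by Proposition \ref{Proposition geiser bertini intersection -id}. It therefore pairs the $60$ cyclic classes of $W(E_7)$ into $30$ pairs. Since twisting is an involution on surfaces over a fixed $\bF_q$, a type exists over $\bF_q$ exactly when its partner does. So it suffices to treat one representative per pair, and the listed pairs ($1$ and $49$, $5$ and $10$, and so on) should be exactly Geiser partners; I would check this first against Table \ref{Table WE7}.

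\textbf{Step 2: non-existence over small fields.} For the lower bounds I would use Theorem \ref{Theorem Weil number of points}: the trace $\mathrm{Tr}(w)$ gives $\#X(\bF_q)=q^2+\mathrm{Tr}(w)q+1$, which must be at least $0$. More refined counts come from applying the same formula over $\bF_{q^k}$, using the trace of $w^k$. One compares these with the number of rational points forced by rational $(-1)$-curves: at most four such curves pass through a point (Proposition \ref{Proposition 4 lines one point}), and each rational curve carries $q+1$ points. These counts should rule out types like $1$ and $49$ for $q<9$. For types of index small enough that $X$ is a blow-up of $\bP^2$, I would instead run an exhaustive search: a del Pezzo surface of that type exists over $\bF_q$ if and only if there is a Galois-stable configuration of $7$ points in general position with the prescribed orbit structure. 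Over $q\le 8$ this is a finite check.

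\textbf{Step 3: existence.} For large $q$, existence of blow-up types comes from counting points in general position. For minimal types I would write explicit quartics $w^2=f(x,y,z)$, or their characteristic-$2$ analogue, over each $\bF_q$, or use conic bundle constructions as in Trepalin. Uniformity in $q$ is obtained by exhibiting a family whose Frobenius class is fixed, for instance via twisting a split surface by an automorphism of the required class.

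\textbf{Main obstacle.} The hardest part is the minimal types over small $q$, where neither blow-up nor point-count arguments decide existence. Here Loughran--Trepalin needed explicit equations and computer searches. I would simply invoke \cite{trepalin20,LoughranTrepalin20} for these cases.
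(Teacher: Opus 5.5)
Your plan matches the paper: the paper gives no proof of this statement beyond citing \cite{trepalin20} and \cite{LoughranTrepalin20} and noting that its numbering of the types agrees with theirs, so your first paragraph (assemble the cited results, check the numbering, and use the Geiser pairing from Table~\ref{Table WE7} as a consistency check) is the entire argument. Your supplementary verification sketch is not needed; note, though, that the crude incidence count does not by itself exclude type $1$ over $\bF_8$, since $56\cdot 9/4=126\le 129=\#X(\bF_8)$, so for that case you would genuinely need the exhaustive search for seven points in general position.
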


Thus, for each type of a degree 2 del Pezzo surface, we can compute values $q_0$ (resp. $q_0'$) for odd (resp. even) prime powers such that for all $q \geq q_0,q_0'$ a degree 2 del Pezzo surface over $\bF_q$ of this type admits a $\bF_q$-rational point outside the $(-1)$-curves and the ramification curve using above proposition and Theorem \ref{Theorem Weil number of points}. These values are summarized at the end of this section in Table \ref{Table lower bound}, where the type refers to degree 1 del Pezzo surfaces. Hence, the corresponding type of a degree 1 del Pezzo exists for $q \geq q_0,q_0'$ if and only if the type of the degree 2 del Pezzo surface exists. The only if part follows from Castelnuovo's contraction criterion (see Theorem \ref{Theorem contraction of an galois orbit}). Observe that by Theorem \ref{Theorem contraction of an galois orbit}, the existence of a degree 2 type is necessary for the existence of the corresponding degree 1 type for all $q$. This already proves certain non-existence claims in Theorem \ref{main theo 1}.

Usually, the type of a degree 1 del Pezzo surface obtained by blowing up a certain type of degree 2 del Pezzo surface can be uniquely determined by the Carter symbol (see Lemma \ref{Lemma carter symbol blow up}). There are a few cases where the type either on a degree 2 or on a degree 1 del Pezzo surface is not uniquely determined by the Carter symbol. Table \ref{Table carter all} below lists the relevant types. 

\begin{table}[H]
    \centering
    \caption{Types not uniquely determined by the Carter symbol}
    \label{Table carter all}
    \vspace{2mm}
    
    \begin{subtable}[t]{0.45\textwidth}
        \centering
        \caption{Degree 2}
        \label{Table Carter  deg 2}
        \begin{tabular}[t]{c  c}
        \toprule
        Type & Carter Symbol\\
        \midrule
        5 & $(A_1^3)'$\\
        6 & $(A_1^3)''$\\
        9 & $(A_1^4)'$ \\
        10 & $(A_1^4)''$ \\
        13 & $(A_3 \times A_1)'$ \\
        14 & $(A_3\times A_1)''$ \\
        21 & $(A_3\times A_1^2)'$ \\
        22 & $(A_3\times A_1^2)''$ \\
        25 & $(A_5)'$ \\
        26 & $(A_5)''$ \\
        37 & $(A_5 \times A_1)'$ \\
        38 & $(A_5\times A_1)''$ \\
        \bottomrule
        \end{tabular}
    \end{subtable}
    \hfill 
    \begin{subtable}[t]{0.45\textwidth}
        \centering
        \caption{Degree 1}
        \label{Table Carter deg 1}
        \begin{tabular}[t]{c  c}
        \toprule
        Type & Carter Symbol\\
        \midrule
        8 & $(A_1^4)'$ \\
        9 & $(A_1^4)''$ \\
        19 & $(A_3\times A_1^2)'$ \\
        20 & $(A_3\times A_1^2)''$ \\
        34 & $(A_3^2)'$ \\
        35 & $(A_3^2)''$ \\
        38 & $(A_5 \times A_1)'$ \\
        39 & $(A_5 \times A_1)''$ \\
        62 & $(A_7)'$ \\
        63 & $(A_7)''$ \\
        \bottomrule
        \end{tabular}
    \end{subtable}
    
\end{table}

For a degree 1 del Pezzo surface of type 34, 35 or 62, 63 there is no confusion since only one of the two with the same Carter symbol contain a rational $(-1)$-curve. 

For the other degree 1 types, we use the following fact: Let $X$ be a smooth projective surface over a field $k$ with absolute Galois group $G$. Since $\mathrm{Pic}(\overline{X})$ is a $G$-module, we can consider the cohomology group $H^1(G,\mathrm{Pic}(\overline{X}))$. It turns out that this group is a birational invariant, see \cite[Theorem 29.1]{manincubic}. The values of this group of the types above are summarized in the appendix in Table \ref{Table H1} and can be used to distinguish the types 8, 9, 19, 20, 38 and 39. 

The remaining types are two different degree 2 types that blow up to the same degree 1 type. The degree 2 type 5 exists for $q \geq7$ and also admits a point outside the $(-1)$-curves and the ramification curve for $q \geq 7$. Hence, it is enough to consider the degree 1 type 5 for $q \leq 5$. Similarly, it is enough for degree 1 type 12 to consider it for $q \leq 5$. Finally, degree 1 type 23 must only be checked over $\bF_2$. 

\begin{table}[H]
    \caption{Lower Bounds}
    \label{Table lower bound}
    \centering
    \begin{tabular}{@{} c *{15}{c} @{}}
    \toprule
    Type & 1 & 2 & 3 & 4 & 5 & 6 & 7 & 8 & 9 & 10 & 11 & 12 & 13 & 14 & 15 \\
    \midrule
    $q_0$  & 53 & 31 & 17 & 23 & 7 & 11 & 13 & 17 & 9 & 9 & 7 & 7 & 9 & 11 & 11 \\ 
    $q_0'$ & 64 & 32 & 32 & 32 & 8 & 16 & 16 & 16 & 8 & 8 & 8 & 8 & 8 & 16 & 16 \\
    
    \addlinespace 
    \midrule
    \addlinespace
    
    Type & 16 & 17 & 18 & 19 & 20 & 21 & 22 & 23 & 24 & 25 & 26 & 27 & 28 & 31 & 32 \\
    \midrule
    $q_0$  & 9 & 7 & 7 & 11 & 7 & 5 & 7 & 3 & 5 & 5 & 7 & 7 & 11 & 9 & 9 \\ 
    $q_0'$ & 8 & 8 & 8 & 16 & 8 & 4 & 8 & 4 & 4 & 8 & 8 & 8 & 16 & 8 & 8 \\
    
    \addlinespace
    \midrule
    \addlinespace
    
    Type & 33 & 34 & 37 & 38 & 39 & 40 & 41 & 44 & 45 & 46 & 47 & 48 & 49 & 50 & 51 \\
    \midrule
    $q_0$  & 7 & 7 & 5 & 7 & 5 & 5 & 7 & 5 & 5 & 5 & 5 & 3 & 7 & 5 & 5 \\ 
    $q_0'$ & 8 & 8 & 4 & 8 & 4 & 2 & 8 & 4 & 4 & 4 & 2 & 2 & 8 & 4 & 4 \\
    
    \addlinespace
    \midrule
    \addlinespace
    
    Type & 52 & 56 & 60 & 62 & 64 & 70 & 71 & 78 & 79 & 80 & 81 & 82 & & & \\
    \midrule
    $q_0$  & 13 & 9 & 5 & 5 & 9 & 7 & 5 & 5 & 5 & 5 & 3 & 3 & & & \\ 
    $q_0'$ & 16 & 8 & 4 & 4 & 8 & 8 & 4 & 4 & 4 & 4 & 2 & 2 & & & \\
    \bottomrule
    \end{tabular}
\end{table}

From Table \ref{Table lower bound}, we see in particular that the types 48, 81 and 82 do not have to be studied further since their existence is completely determined by the corresponding degree 2 type.

\subsection{Blow-ups of \texorpdfstring{$\bP^2_{\bF_q}$}{P\string^2\_F\_q}}\label{sec: blowups}

In this section, we consider all those types that are blow-ups of $\bP^2_{\bF_q}$. Using Lemma \ref{Lemma carter symbol blow up}, we can identify them using the Carter symbol. As we have seen in the last section, there is one small subtlety: For a few types, we have the same Carter symbol even though the types are different. Using Lemma \ref{Lemma strict trafo under galois} and Theorem \ref{Theorem bij exc class curves, strict trafos of lines}, one can explicitly determine the orbits of $(-1)$-curves and hence decide which type is a blow-up of $\bP^2_{\bF_q}$. The types 1, 35 and 63 are treated at the end of this section. 

The following algorithm can be found in the Magma code of \cite{Loughran19}. We will describe the algorithm in detail. The idea is to translate the conditions for a set of points to be in general position into linear algebra criteria that can easily be verified by a computer search. To obtain different types from blowing up the plane, we have to blow-up different Galois orbits. Hence, one just lists all possible orbits and tests the linear algebra criteria to either find an example or to prove non-existence.

The next lemma is a straightforward check, cf.\ \cite[Lemma 2.5]{Loughran19}.

\begin{lem}\label{Lemma matrix cond gen pos}
    Let $k$ be a field, and let $P_i = [x_i:y_i:z_i] \in \bP^2_k$ be $r$ distinct $k$-rational points for $1 \leq r \leq 8$. 
    \begin{itemize}
        \item If $r=3$, then $P_1$, $P_2$ and $P_3$ are collinear if and only if the matrix 
        \[\begin{pmatrix}
            x_1 & y_1 & z_1 \\
            x_2 & y_2 & z_2\\
            x_3 & y_3 & z_3
        \end{pmatrix}\]
        has determinant 0.
        \item If $r=6$, then $P_1,\dots,P_6$ lie on a conic if and only if the matrix $M \in M_{(6,6)}(k)$ whose $i$th row is 
        \[\begin{pmatrix}
            x_i^2 & y_i^2 & z_i^2 & x_iy_i & x_iz_i & y_iz_i
        \end{pmatrix}\]
        has determinant 0.
        \item If $r=8$, for each $1 \leq i \leq 8$, consider the matrix $M_i \in M_{(11,10)}(k)$ whose $j$th row for $1 \leq j \leq 8$ is 
        \[\begin{pmatrix}
            x_j^3 & y_j^3 & z_j^3 & x_j^2y_j & x_j^2z_j & x_jy_j^2 & y_j^2z_j & x_jz_j^2 & y_jz_j^2 & x_jy_jz_j
        \end{pmatrix}\]
        and whose last three rows are 
        \[\begin{pmatrix}
            3x_i^2 & 0 & 0 & 2x_iy_i & 2x_iz_i & y_i^2 & 0 & z_i^2 & 0 & y_iz_i\\
            0 & 3y_i^2 & 0 & x_i^2 & 0 & 2x_iy_i & 2y_iz_i & 0 & z_i^2 & x_iz_i\\
            0 & 0 & 3z_i^2 & 0 & x_i^2 & 0 & y_i^2 & 2x_iz_i & 2y_iz_i & x_iy_i
        \end{pmatrix}.\]
        Then $P_1,\dots,P_8$ lie on a cubic with a singularity at $P_i$ if and only if $M_i$ has a non-trivial kernel. 
     \end{itemize}
\end{lem}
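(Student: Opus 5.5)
The plan is to reduce each of the three statements to the vanishing of a linear map on a space of forms of fixed degree, evaluated at the points, via the Veronese-type embedding. A plane curve of degree $d$ is the zero set of a nonzero form $F=\sum_\alpha c_\alpha m_\alpha$, where $m_\alpha$ runs through the monomials of degree $d$. There are $3$ monomials for $d=1$, $6$ for $d=2$, and $10$ for $d=3$. The condition that $P_j$ lies on $V(F)$ is $F(x_j,y_j,z_j)=0$. This is well defined on projective points because $F$ is homogeneous, and it is linear in the coefficient vector $c=(c_\alpha)$. The row of the relevant matrix attached to $P_j$ is exactly the vector of monomials evaluated at $(x_j,y_j,z_j)$, in the order listed in the statement. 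Hence a curve of degree $d$ through the given points exists if and only if the matrix of evaluations has a nonzero kernel vector $c$.

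For $r=3$ and $r=6$ the matrix is square ($3\times3$ and $6\times6$ respectively). A nonzero kernel vector then exists if and only if the determinant vanishes. The vector $c$ gives a nonzero form, hence a line or a conic, possibly degenerate, through the points. This matches the notion of ``lying on a conic'' in Definition \ref{Definition general Position}, where degenerate conics are allowed. For $r=6$ one might worry that the conic found is a line pair; that is still a conic in the sense of the definition, so nothing more is needed.

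For $r=8$, I would add the singularity condition at $P_i$. A point $P$ on $V(F)$ is singular if and only if $\partial F/\partial x$, $\partial F/\partial y$ and $\partial F/\partial z$ all vanish at $P$. Each partial derivative is again linear in $c$. Differentiating the monomials in the given order produces precisely the three extra rows of $M_i$; for instance $\partial(x^2y)/\partial x=2xy$ and $\partial(xyz)/\partial z=xy$. So a nonzero kernel vector of $M_i$ is the same as a nonzero cubic through $P_1,\dots,P_8$ whose three partials vanish at $P_i$.

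The one subtle step is characteristic, in particular $3$ and $2$, where the Euler relation $3F=xF_x+yF_y+zF_z$ fails to recover $F(P_i)=0$ from the vanishing of the partials. This is harmless here, because the row for $j=i$ already imposes $F(P_i)=0$ separately. So I would take as the definition of singularity in all characteristics that $F(P)=0$ and all three partials vanish at $P$, and justify it by the Jacobian criterion. With that definition, the kernel condition is exactly ``a cubic through the eight points singular at $P_i$.'' I expect this characteristic bookkeeping to be the only point needing care; everything else is direct linear algebra.
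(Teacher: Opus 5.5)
Your proposal is correct and is exactly the straightforward linear-algebra verification the paper intends; the paper gives no argument of its own beyond citing \cite[Lemma 2.5]{Loughran19}. One small correction: only characteristic $3$ prevents the Euler relation from recovering $F(P_i)=0$, since in characteristic $2$ one has $3=1$ and hence $F=xF_x+yF_y+zF_z$; as you observe, the row $j=i$ of $M_i$ makes this irrelevant in every characteristic anyway.
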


The following two lemmas are used in Algorithm 1.

\begin{lem}\label{Lemma four rational points P2}
    Let $k$ be a field and let $P_1,\dots,P_4 \in \bP^2_k$ be four $k$-rational points in general position. Then there exists a projective linear automorphism $\varphi \in \mathrm{PGL}_3(k)$ such that $\varphi(P_1)=[1:0:0]$, $\varphi(P_2)=[0:1:0]$, $\varphi(P_3)=[0:0:1]$ and $\varphi(P_4)=[1:1:1]$.
\end{lem}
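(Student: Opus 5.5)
We prove Lemma \ref{Lemma four rational points P2} by elementary linear algebra over $k$. Write $P_i=[v_i]$ with $v_i\in k^3\setminus\{0\}$. Since the four points are in general position, in particular no three of them are collinear (condition (1) of Definition \ref{Definition general Position}, which only needs to be applied over $\overline{k}$; collinearity of $k$-points is detected by the determinant criterion of Lemma \ref{Lemma matrix cond gen pos} and is therefore independent of the field). Hence $v_1,v_2,v_3$ are linearly independent, i.e.\ they form a basis of $k^3$.

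First, let $A\in \mathrm{GL}_3(k)$ be the matrix with $A v_1=e_1$, $Av_2=e_2$, $Av_3=e_3$; this is the inverse of the matrix with columns $v_1,v_2,v_3$. Then $A$ maps $P_1,P_2,P_3$ to the coordinate points. Write $Av_4=(a,b,c)$. If one of $a,b,c$ were zero, say $c=0$, then $Av_4$ would lie in the span of $e_1,e_2$. Since $A$ preserves collinearity, this would make $P_1,P_2,P_4$ collinear, a contradiction. Thus $a,b,c\in k^\times$.

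Second, let $D=\mathrm{diag}(a^{-1},b^{-1},c^{-1})$. This matrix fixes the points $[1:0:0],[0:1:0],[0:0:1]$ projectively and sends $[a:b:c]$ to $[1:1:1]$. Then $\varphi=[DA]\in\mathrm{PGL}_3(k)$ has the required property.

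No step is a real obstacle. The only point needing care is that the nonvanishing of $a,b,c$ uses the non-collinearity of every triple containing $P_4$, not just of $P_1,P_2,P_3$. The construction is also $k$-rational because all the $v_i$ have entries in $k$.
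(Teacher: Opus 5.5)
Your proposal is correct and follows the paper's proof: the paper also maps $v_1,v_2,v_3$ to $e_1,e_2,e_3$ by a linear map $T$ and then composes with $\mathrm{diag}(a^{-1},b^{-1},c^{-1})$, where $T(v_4)=(a,b,c)$. You additionally spell out why $a,b,c\neq 0$ using non-collinearity of the triples containing $P_4$, a step the paper leaves implicit.
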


\begin{proof}
    Let $V$ be a three-dimensional $k$-vector space and let $\langle v_1\rangle,\dots,\langle v_4\rangle$ be the corresponding linear subspaces to $P_1,\dots,P_4$. By assumption $v_1$, $v_2$ and $v_3$ are linearly independent and hence there exist $a,b,c \in k$ such that $v_4 = av_1+bv_2+cv_3$. Let $T\colon V \to V$ be the linear map sending $v_i$ to $e_i$ for $i \in \{1,2,3\}$. Then $T(v_4)=(a,b,c)$. Composing with $\mathrm{diag}(a^{-1},b^{-1},c^{-1})$ yields the claim. 
\end{proof}

\begin{lem}\label{Lemma enough lines in P2}
    Let $k=\bF_q$ be a finite field and let $S$ be a set of at most eight $k$-rational points in $\bP_k^2$ such that no three of them lie on a line. Then there exists a projective line $L \subseteq \bP^2_k$ disjoint from $S$. In particular, the points in $S$ can be mapped projectively linearly into the affine chart $\{z\ne0\}$. 
\end{lem}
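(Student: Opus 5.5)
We count rational lines. Let $s=|S|\leq 8$. We show that the number of $\bF_q$-rational lines meeting $S$ is strictly smaller than the total number $q^2+q+1$ of $\bF_q$-rational lines in $\bP^2_k$. A line disjoint from $S$ as a set of rational points is then what we want, since the points of $S$ are $k$-rational.

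\emph{Counting step.} Each point of $S$ lies on exactly $q+1$ rational lines. Since no three points of $S$ are collinear, a rational line contains at most two points of $S$. Every pair of points of $S$ spans exactly one rational line, and distinct pairs give distinct lines. By inclusion--exclusion, the number of rational lines meeting $S$ is
\[
f(s)=s(q+1)-\binom{s}{2}.
\]

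\emph{Bounding $s$ for small $q$.} Fix $P\in S$. The remaining $s-1$ points lie on pairwise distinct lines through $P$, again because no three points are collinear. There are only $q+1$ such lines, so $s\leq q+2$. Hence $s\leq \min(8,q+2)$.

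\emph{Monotonicity.} We have $f(s+1)-f(s)=q+1-s\geq 0$ for $s\leq q+1$. So $f$ is nondecreasing on the allowed range, and it suffices to check $f(s_0)<q^2+q+1$ at $s_0=\min(8,q+2)$.

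\emph{Case $q\geq 7$.} Here $s_0=8$, and the inequality $8(q+1)-28<q^2+q+1$ is equivalent to $q^2-7q+21>0$. This holds for all $q$, since the discriminant $49-84$ is negative.

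\emph{Cases $q\leq 5$.} These are a direct check:
\begin{itemize}
\item $q=2$, $s_0=4$: $f=12-6=6<7$;
\item $q=3$, $s_0=5$: $f=20-10=10<13$;
\item $q=4$, $s_0=6$: $f=30-15=15<21$;
\item $q=5$, $s_0=7$: $f=42-21=21<31$.
\end{itemize}
In every case some rational line $L$ misses $S$.

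\emph{Affine chart.} Choose a linear form $\ell$ over $k$ with $L=V(\ell)$, and extend $\ell$ to a basis of linear forms over $k$. This gives an element of $\mathrm{PGL}_3(k)$ sending $L$ to $\{z=0\}$. It maps $S$ into the chart $\{z\neq 0\}$.

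The only delicate point is the small fields $q\leq 5$. There the crude bound $s\leq 8$ fails, and the arc bound $s\leq q+2$ together with the monotonicity of $f$ is what makes the count work.
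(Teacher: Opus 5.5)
Your proof is correct and is essentially the paper's argument: it uses the same double count of rational lines through one versus two points of $S$, which gives $8(q+1)-28<q^2+q+1$ when $|S|=8$. Your arc bound $s\le q+2$ together with the monotonicity of $f(s)=s(q+1)-\binom{s}{2}$ also fills a small gap in the paper, whose reduction ``it is enough to consider $|S|=8$'' is not automatic: for $q\le 5$ no eight such points exist, and a smaller set need not extend to eight such points.
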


\begin{proof}
    It is enough to consider the case where $|S|=8$. Each $k$-rational point is contained in exactly $q+1$ $k$-rational lines. Denote by $l_1$ and $l_2$ the number of lines through exactly one point of $S$ or two points of $S$, respectively. Since no three points lie on a line, we obtain $2l_2+l_1=8(q+1)$. Since $l_2 = \binom{8}{2} = 28$, we get $l_1 = 8(q+1)-56$. There are $q^2+q+1$ $k$-rational lines in $\bP^2_k$. If each such line would meet a point in $S$, then 
    \[q^2+q+1=l_1+l_2 = 28+8(q+1)-56=8q-20.\]
    But this equation has no solution and consequently there exists a line disjoint from $S$. 
\end{proof}

\noindent\textbf{Algorithm 1:} Points in general position in $\bP^2_{\bF_q}$ \label{alg: 1}

\noindent\textbf{Input:} A prime power $q$ and a sequence $n_1,\dots,n_r$ of non-negative integers 

\noindent\textbf{Output:} Either a set $M$ of $\sum_j n_j$ points in general position in $\bP^2_{\bF_q}$ such that $n_j$ points are of degree $j$ or \textbf{False} if such a set of points does not exist

\begin{enumerate}
    \item Depending on the value of $n_1$ we can assume without loss of generality that $M$ contains the points $[1:0:0]$, $[0:1:0]$, $[0:0:1]$ and $[1:1:1]$
    \item Construct the set $\cA = \underbrace{\bF_q^\times\times \dots\times \bF_q^\times}_{n_1-4 \: \text{times}}\times \dots \times \underbrace{\bF_{q^r}^\times\times \dots \times \bF_{q^r}^\times}_{n_r \:\text{times}}$ (here $n_1-4$ is considered to be 0 if it is $< 0$)
    \item For elements $x=(x_i),y=(y_i) \in \cA$ construct for all $i$ and $0 \leq t \leq r-1$ the points $[x_i^{q^t}:y_i^{q^t}:1]$
    \item Apply Lemma \ref{Lemma matrix cond gen pos} to the set $M$ of points from step 3 if $|M| = \sum_jn_j\cdot j$
    \item Loop over all possible combinations of elements $x,y\in \cA$ and do steps 3 and 4
    \item If a set $M$ of points in general position is found return $M$ and if no such combination exists return \textbf{False}
\end{enumerate}

\begin{prop}
    Algorithm $1$ is correct.
\end{prop}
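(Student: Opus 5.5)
To prove that Algorithm 1 is correct, I would show two things. First, any set it returns really consists of points in general position with the prescribed degree distribution. Second, if such a set exists, the exhaustive loop finds one, up to a projective linear change of coordinates. The general position criteria are preserved by $\mathrm{PGL}_3(\bF_q)$, since lines, conics and singular cubics are mapped to curves of the same kind. It is therefore enough to show that every admissible configuration is $\mathrm{PGL}_3(\bF_q)$-equivalent to one produced in steps 1--3.

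Soundness is the easy direction. Step 3 produces, for each chosen $x_i\in\bF_{q^j}^\times$, the full Frobenius orbit $\{[x_i^{q^t}:y_i^{q^t}:1]\}$. This orbit is a Galois-invariant set, i.e.\ a closed point of degree dividing $j$. The check $|M|=\sum_j n_j j$ in step 4 forces all produced geometric points to be distinct. Hence each orbit has length exactly $j$, and the orbits are pairwise different. Step 4 then applies Lemma \ref{Lemma matrix cond gen pos} over $\overline{\bF_q}$ to all triples, all $6$-subsets, and, when $|M|=8$, to all $8$-subsets with each choice of singular point. This is exactly Definition \ref{Definition general Position}. I also need that the relevant curves are detected over the algebraic closure; the determinant and kernel criteria are field-independent, so this holds.

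For completeness, suppose $M$ is a configuration in general position with $n_j$ closed points of degree $j$. Since $M$ is Galois invariant, the points not covered by the normalisation all lie in the affine chart $\{z\neq 0\}$. Step 1 is justified as follows. If $n_1\ge 4$, pick four rational points; no three are collinear, so Lemma \ref{Lemma four rational points P2} moves them to $[1:0:0],[0:1:0],[0:0:1],[1:1:1]$. To ensure the remaining points avoid the line $\{z=0\}$, I would use a variant of Lemma \ref{Lemma enough lines in P2}. If $n_1<4$, I would instead use Lemma \ref{Lemma enough lines in P2} directly to find a rational line disjoint from $M$ and send it to $\{z=0\}$. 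Some care is needed because Lemma \ref{Lemma enough lines in P2} counts only rational points. A rational line meets a closed point of degree $j\ge 3$ in at most one geometric point (otherwise it would contain all $j$ conjugates, which gives three collinear points). It meets a closed point of degree $2$ only if it is the line through the two conjugate points. So the counting in that lemma can be adapted: each non-rational closed point rules out only a bounded number of rational lines, and a similar counting shows that a disjoint rational line still exists. Once the remaining points are affine, each closed point of degree $j$ is the orbit of a geometric point $[a:b:1]$ with $a,b\in\bF_{q^j}$.

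The main obstacle is that step 2 uses $\bF_{q^j}^\times$ rather than $\bF_{q^j}$, so coordinates equal to $0$ are excluded. I would handle this by arranging that no point lies on the lines $x=0$ or $y=0$, using remaining coordinate freedom. When $n_1\ge4$, the four rational points are fixed, and the lines $x=0$, $y=0$ already pass through two of them each. Then no third point of $M$ can lie on these lines, since it would be collinear with two of the fixed points. When $n_1<4$, there is more freedom in choosing coordinates, and I would argue by counting, as in Lemma \ref{Lemma enough lines in P2}, that one can choose three rational lines in general position, all disjoint from $M$. Finally, the finiteness of the search space $\cA$ gives termination, and the exhaustive loop in step 5 gives the correctness of the \textbf{False} output.
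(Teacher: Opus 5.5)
Your outline is the paper's (soundness from the orbit-size check and Lemma \ref{Lemma matrix cond gen pos}, completeness from Lemma \ref{Lemma four rational points P2} and a reduction to an affine chart). You are right that the paper's short proof glosses over two points. First, Lemma \ref{Lemma enough lines in P2} only controls rational points. Second, $\cA$ uses $\bF_{q^j}^\times$, so points on $x=0$ or $y=0$ are never produced.

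For $n_1\ge 4$ your argument works. The line $z=0$ needs no variant of Lemma \ref{Lemma enough lines in P2}, and none could be applied once the frame is fixed. Like $x=0$ and $y=0$, it already contains two of the four fixed points.

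The gap is the case $n_1<4$, and it has two parts.

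\begin{itemize}
\item For $1\le n_1\le 3$ you normalize towards the wrong target. By step 2, $\cA$ has no factor $\bF_q^\times$ when $n_1<4$. So in every configuration the algorithm produces, the rational points are the first $n_1$ standard points placed in step 1, not points of the torus $\{xyz\neq0\}$.
\item The lines you claim need not exist for small $q$. Over $\bF_2$ with $n_1=2$, only two of the seven rational lines miss both rational points, so three disjoint lines in general position are impossible. Such configurations exist, e.g.\ two rational points and a degree-$3$ point lying on no rational line. With $n_1=3$ even a single line can fail. Take $\bF_4=\bF_2(\omega)$, the points $[1:0:0],[0:1:0],[0:0:1]$ and the degree-$2$ point $\{[1:\omega:\omega^2],[1:\omega^2:\omega]\}$. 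These are in general position, yet six rational lines pass through a coordinate point and the seventh, $x+y+z=0$, contains the degree-$2$ point. So ``a similar counting shows that a disjoint rational line still exists'' is false as stated.
\end{itemize}

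What is missing is the correct normalization plus one observation. Send the $n_1\le 3$ rational points to the first $n_1$ coordinate points. Then use two facts about a rational line, which is Galois-stable:
\begin{itemize}
\item It contains \emph{no} geometric point of a closed point of $M$ of degree $\ge 3$. Your ``at most one'' is too weak to make the count work for small $q$.
\item If it passes through a rational point of $M$, it contains no non-rational point $Q$ of $M$, since it would also contain $\sigma(Q)\neq Q$.
\end{itemize}
Hence rational lines through the fixed vertices automatically avoid the rest of $M$. For $n_1=3$ there is nothing to do. For $n_1=2$, take as $x=0$ and $y=0$ any rational lines through $[0:1:0]$ and $[1:0:0]$ other than $z=0$.

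Counting is only needed for $n_1\le 1$, where the only excluded lines are the at most $n_2$ lines spanned by degree-$2$ points.
\begin{itemize}
\item For $n_1=1$ there are $q^2\ge 4>n_2$ candidates for the line not through $[1:0:0]$.
\item For $n_1=0$, three non-concurrent admissible lines exist except possibly for $q=2$, $n_2=4$. That case is vacuous because $\bP^2(\bF_4)$ contains no eight points with no three collinear.
\end{itemize}
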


\begin{proof}
    The correctness of step 1 follows from Lemma \ref{Lemma four rational points P2}. Furthermore, by Lemma \ref{Lemma enough lines in P2}, we can work without loss of generality in an affine chart when enumerating all possible combinations of points in general position. In steps 3 and 4, we construct precisely those points with the Galois orbit structure of interest. Hence, the algorithm either finds a set of points in general position with the desired orbit structure or prove the non-existence of such points. 
\end{proof}

Using the above algorithm and the lower bound in the previous section, one obtains the statements in Theorem \ref{main theo 1} for the following types: 2, 3, 4, 5, 6, 7, 9, 10, 11, 12, 13, 18, 20, 21, 22, 23, 37, 39, 40, and their Bertini twists. 

We now treat the types 1, 35 and 63. Types 1 and 63 were already considered in \cite[Section 5.2]{Loughran19}. For the sake of completeness, we briefly recall the ideas of the proofs here. Type 1 is special in the sense that it does not exist over larger finite fields (compared with the other types). Ruling out the existence over $\bF_q$ for $q=11$, 13, 16 and $17$ is computationally quite heavy. Fortunately, this kind of surface was studied in the literature and certain classification results are available. 
\begin{defi}\label{Definition split and full}
    Let $X$ be a del Pezzo surface over a finite field $\bF_q$. Then $X$ is called \emph{split} if all its $(-1)$-curves are defined over $\bF_q$. Moreover, a split del Pezzo surface is called \emph{full} if all $\bF_q$-rational points lie on the $(-1)$-curves.
\end{defi}

For degree $\geq 3$ these surfaces were studied by Hirschfeld already in the 1980's and earlier, cf.\ \cite{Hirschfelddelpezzofinitefields} and also \cite[Theorem 20.3.9 and 20.3.10]{Hirschfeldfiniteprojspaces}. In particular, he showed that for $q \leq 8$ all split cubic surfaces are full over $\bF_q$ and that for $q \geq 9$ there always exists a split cubic surface that is not full. 

More recently, Knecht and Reyes have generalized his results to degree 2 del Pezzo surfaces, see \cite{knechtfulldegreetwo}. They prove that all split degree 2 del Pezzo surfaces over $\bF_{9}$, $\bF_{11}$ and $\bF_{13}$ are full, see \cite[Corollary 4.4]{knechtfulldegreetwo}. Furthermore, as they remark in their proof of Theorem 4.5, it follows from further results by Kaplan \cite{Kaplan} that there is a unique split but not full degree 2 del Pezzo surface over $\bF_{17}$. This surface is branched over the so-called Kuwata curve $C_{234}$, see \cite{Kuwata}, and using the results in Kuwata's paper, it is easy to see that all rational points lie on the ramification curve. For $q=16$ and greater $q$, split degree 1 del Pezzo surfaces exist, which can be verified using Algorithm 1.  

Next, we discuss the types 35 and 63, which do not contain a rational $(-1)$-curve; hence, we do not have lower bounds on their existence over a finite field.

For type 63, which can be realized as a blow-up of a degree 8 point in $\bP^2_{\bF_q}$, we recall the very nice idea provided in \cite[Section 5.2]{Loughran19}: Consider a normal basis $\alpha_i$ of the extension $\bF_{q^8}/\bF_q$ and the eight points $[1:\alpha_i:\alpha_i^3]$. The idea is to show that these points are always in general position using Lemma \ref{Lemma matrix cond gen pos} and the properties of a normal basis. We refer to \cite[Section 5.2]{Loughran19} for these computations. 

We are now left with type 35, which can be realized as a blow-up of two degree 4 points in $\bP^2_{\bF_q}$. A direct application of the arguments as for type 63 seems not possible since we blowup two and not just one point. Instead, we use the following idea: Let $\alpha_i$ be a normal basis for the extension $\bF_{q^4}/\bF_q$. Consider the eight points $[1:\alpha_i:\alpha_i^3]$ and $[1:\alpha_i+x:\alpha_i^3]$ for some $x \in \bF_q^\times$. The idea is to show that for $q$ large enough there always has to exist a $x \in \bF_q^\times$ such that the above eight points are in general position. We will do so by bounding the degrees of the determinants considered as polynomials in $x$. Doing this carefully enough yields a moderate bound. 

\begin{lem}\label{Lemma type 35}
    A del Pezzo surface of degree $1$ and type $35$ exists over $\bF_q$ for all $q$.
\end{lem}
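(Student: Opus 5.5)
We follow the strategy sketched just before the statement. Fix a normal basis $\alpha_1,\dots,\alpha_4$ of $\bF_{q^4}/\bF_q$, with $\alpha_{i+1}=\alpha_i^q$ (indices mod $4$). For $x\in\bF_q^\times$ consider the eight points
\[P_i=[1:\alpha_i:\alpha_i^3],\qquad Q_i(x)=[1:\alpha_i+x:\alpha_i^3],\qquad 1\le i\le 4.\]
Since $x\in\bF_q$, Frobenius permutes the $P_i$ cyclically and the $Q_i(x)$ cyclically. The $P_i$ are pairwise distinct, as are the $Q_i(x)$. Also $P_i\neq Q_j(x)$: equality would force $\alpha_i^3=\alpha_j^3$ and $\alpha_i=\alpha_j+x$ with $x\ne0$, so $i\ne j$; but then $\alpha_i-\alpha_j\in\bF_q$, and applying Frobenius gives $\alpha_{i+1}-\alpha_{j+1}=\alpha_i-\alpha_j$, which contradicts linear independence of the normal basis. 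Hence the two orbits are two closed points of degree $4$. If they are in general position, Theorem \ref{Theorem del pezzo alg closed field points gen pos} shows the blow-up is a del Pezzo surface of degree $1$ over $\bF_q$. Lemma \ref{Lemma carter symbol blow up} and Lemma \ref{Lemma strict trafo under galois} then give Carter symbol $A_3^2$. No exceptional curve is rational: the exceptional divisors lie in orbits of length $4$, and every plane curve in Theorem \ref{Theorem bij exc class curves, strict trafos of lines} distinguishes the points it passes through in a way incompatible with the Frobenius action. So the type is $35$ rather than $34$, by the remark after Table \ref{Table carter all}.

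The main work is proving general position for some $x$. By Lemma \ref{Lemma matrix cond gen pos}, general position is equivalent to the non-vanishing of finitely many expressions:
\begin{itemize}
\item the $\binom{8}{3}$ collinearity determinants;
\item the $\binom{8}{6}$ conic determinants;
\item for each $i$, the maximal minors of the $11\times10$ matrices $M_i$ (one nonzero minor suffices for each $i$).
\end{itemize}
Each of these is a polynomial in $x$ with coefficients in $\bF_{q^4}$. In the matrices, the $Q_j$-rows have entries polynomial in $x$ of degree at most $2$, $2$, and $3$ respectively, coming from the monomials in $y$. This bounds the degrees: roughly $\le 3$ for lines, $\le 2\cdot$(number of $Q$-rows) $\le 8$ for conics, and about $3\cdot 4+2\cdot 3\le 18$ for the cubic minors. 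Only the degree in $x$ matters here, not dependence on $q$.

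Step one is to show each of these polynomials is not identically zero. Conditions involving only the $P_i$ are independent of $x$. For these, we invoke the computation from \cite[Section 5.2]{Loughran19} for the orbit $[1:\alpha:\alpha^3]$: four points on the cubic curve $z=y^3$ with no three collinear, which holds because the $\alpha_i$ are distinct and the sum of any three is nonzero by independence. For mixed conditions, we look at the leading coefficient in $x$, or specialise formally. For example, the top coefficient of a collinearity determinant with two $Q$-rows and one $P$-row is a simple expression in the $\alpha$'s, nonzero by linear independence of the normal basis. For conics and singular cubics, we plan to compare leading terms in $x$, which reduces to a Vandermonde-type determinant in the $\alpha_i$. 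This non-vanishing of leading coefficients is the main obstacle. It must be checked for each combinatorial shape of condition, using Frobenius symmetry to cut down the cases. If a leading coefficient does vanish, we fall back on the lowest-order coefficient or a second normal-basis perturbation such as $[1:\alpha_i+x:\alpha_i^3+x]$.

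Given non-vanishing, each polynomial has at most its degree many roots. Summing the degrees over all conditions gives an explicit constant $N$, and using Frobenius orbits of conditions this can be reduced substantially, since conjugate conditions share roots in $\bF_q$. Whenever $q-1>N$, some $x\in\bF_q^\times$ avoids all roots, proving existence. For the finitely many remaining small $q$, we use Algorithm 1 with $(n_1,\dots,n_4)=(0,0,0,2)$ to produce explicit examples; these exist by the computer search \cite{Karras}. Together these cases show type $35$ exists for all $q$.
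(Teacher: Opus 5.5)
Your strategy is the paper's: the same eight points, the conditions of Lemma~\ref{Lemma matrix cond gen pos} read as polynomials in $x$, one representative per Frobenius orbit of conditions, root counting for large $q$, and Algorithm~1 below the threshold. The paper's degree bounds are sharper than yours because only a few columns involve $x$, but that only changes how many $q$ are left to the computer. Your identification of the type as $35$ is fine.

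The genuine gap is the step you yourself call the main obstacle and leave as a plan. You never show that the conic and cubic polynomials are not identically zero, and without that the root count excludes nothing. Neither mechanism you propose works as stated:
\begin{itemize}
\item \emph{Lowest-order coefficients.} Any six of the eight points contain at least two pairs $P_i,Q_i$, and these coincide at $x=0$. So every conic determinant and every maximal minor of the $M_i$ has zero constant term for trivial reasons. The lowest coefficient that can be nonzero is a tangency-type determinant, in which the row of each coinciding $Q_i$ is replaced by the $y$-derivative of the monomial vector at $P_i$.
\item \emph{Leading coefficients.} These are generalized, not ordinary, Vandermonde determinants. For $\{P_1,\dots,P_4,Q_k,Q_l\}$ the $x^3$-coefficient is $\pm(\alpha_l^3-\alpha_k^3)$ times the $4\times 4$ determinant with rows $(1,\alpha_i,\alpha_i^3,\alpha_i^6)$. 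That determinant is the Vandermonde determinant times the Schur polynomial $s_{(3,1)}(\alpha_1,\dots,\alpha_4)=e_1^2e_2-e_2^2-e_1e_3+e_4$, where the $e_j$ are the elementary symmetric functions of the $\alpha_i$. This is an element of $\bF_q$ determined by the minimal polynomial of $\alpha_1$, and its non-vanishing does not follow from linear independence of a normal basis. The other conic orbits and the cubic minors give messier expressions of the same kind.
\end{itemize}

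You are not missing an idea that the paper supplies. Its proof verifies non-vanishing only for the collinearity determinants, whose constant terms are visibly nonzero; for the conic and cubic conditions it only bounds degrees. To complete either argument you need, for each Frobenius orbit of conic and cubic conditions, some coefficient that is a nonzero polynomial in $\alpha_1,\dots,\alpha_4$ in every characteristic. You then need either a proof that it cannot vanish on a normal basis, or a choice of normal element avoiding these finitely many zero loci; for large $q$ a Schwartz--Zippel count against the number of normal elements of $\bF_{q^4}$ provides one. The remaining $q$ are handled by computer.

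When you fix $N$, note that the conditions with three $P$'s and three $Q$'s are $16$ matrices forming $4$ Frobenius orbits, not the three used in the paper. On the other hand, the forced root $x=0$ lies outside $\bF_q^\times$, so each conic and cubic polynomial has at most its degree minus one admissible roots. This more than absorbs the extra orbit and keeps the paper's split between $q\le 53$ and $q\ge 59$ valid.
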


\begin{proof}
    Let $\alpha_i$ for $i=1,2,3,4$ be a normal basis for the extension $\bF_{q^4}/\bF_q$. One checks that the points $P_i=[1:\alpha_i:\alpha_i^3]$ and $Q_i=[1:\alpha_i+x:\alpha_i^3]$ define two degree 4 points for $x \in \bF_q^\times$. Applying Lemma \ref{Lemma matrix cond gen pos} to these points and expanding all determinants occurring there we get certain polynomials in $x$. Note that if the determinant of the $10 \times 10$ matrix, obtained from the $11 \times10$ matrix in Lemma \ref{Lemma matrix cond gen pos} by removing the last row, does not vanish, the $11 \times 10$ matrix has a trivial kernel. 
    
    In the following we will group these polynomials in a sensible way and analyze their degrees to get a lower bound for $q$ since if $q$ is taken to be larger then the sum of these degrees, the existence of a $x$ such that the above eight points are in general position is guaranteed.
    
    From the first condition of Lemma \ref{Lemma matrix cond gen pos} we get $\binom{8}{3}=56$ polynomials in $x$. There are four polynomials where each point is one of the $P_i$'s. These clearly have degree 0 in $x$. Note that in this case the determinant does not vanish since
    \[\left|\begin{pmatrix}
        1 & 1 & 1\\
        \alpha_i & \alpha_j & \alpha_r\\
        \alpha_i^3 & \alpha_j^3 & \alpha_r^3
    \end{pmatrix}\right| = (\alpha_i-\alpha_j)(\alpha_i-\alpha_r)(\alpha_j-\alpha_r)(\alpha_i+\alpha_j+\alpha_r)\]
    which is for pairwise different $i,j,r$ always non-zero since the $\alpha_i$'s are a normal basis. 
    
    There are 24 polynomials where two of the three points are from the $P_i$'s, 24 where two of the three points are from the $Q_i$'s and again 4 where all three points are from the $Q_i$'s. Before we analyze the degrees of those polynomials we make the following observation:
    
    If $A=(P_i|P_j|Q_k)$ and $B=(\varphi(P_i)|\varphi(P_j)|\varphi(P_k))$ for pairwise different $i,j,r$, and where $\varphi$ is the Frobenius homomorphism are two matrices, then the determinants are Galois conjugated polynomials over $\bF_q$ and hence have the same zeros in $\bF_q$. Consequently, we do not have to count all degrees but just for one representative from each orbit. Hence, we have to check the degrees of $6+6+1=13$ polynomials. The first six are of the form
    \[\left| \begin{pmatrix}
        1 & 1 & 1\\
        \alpha_i & \alpha_j+x & \alpha_r\\
        \alpha_i^3 & \alpha_j^3 & \alpha_r^3
    \end{pmatrix}\right|= x(\alpha_r^3-\alpha_i^3)+(\alpha_i-\alpha_j)(\alpha_i-\alpha_r)(\alpha_j-\alpha_r)(\alpha_i+\alpha_j+\alpha_r)\]
    for pairwise different $i,j,r$ and accordingly linear. The next six are of the form
    \[\left| \begin{pmatrix}
        1 & 1& 1\\
        \alpha_i & \alpha_j+x & \alpha_r+x\\
        \alpha_i^3 & \alpha_j^3 & \alpha_r^3
    \end{pmatrix}\right| = x(\alpha_r^3-\alpha_j^3)+(\alpha_i-\alpha_j)(\alpha_i-\alpha_r)(\alpha_j-\alpha_r)(\alpha_i+\alpha_j+\alpha_r)\]
    for pairwise different $i,j,r$ and also linear. The last one is of the form 
    \[\left| \begin{pmatrix}
        1 & 1 & 1\\
        \alpha_i+x & \alpha_j+x & \alpha_r+x\\\alpha_i^3 & \alpha_j^3 & \alpha_r^3
    \end{pmatrix}\right| = \left|\begin{pmatrix}
        1 & 1 & 1\\
        \alpha_i & \alpha_j & \alpha_r\\
        \alpha_i^3 & \alpha_j^3 & \alpha_r^3
    \end{pmatrix}\right|\]
    for pairwise different $i,j,r$ and so has degree 0.
    
    Next, we have to consider the $\binom{8}{6}= 28$ matrices from the second condition. We have six matrices with all four $P_i$'s and two of the $Q_i$'s, 14 with three of the $P_i$'s and three of the $Q_i$'s and again six with two of the $P_i$'s and all $Q_i$'s. Note, that we can also collect orbits as above. We have two orbits for the first six matrices, three for the 12 matrices and again two for the last six matrices. One of the first six matrices is a matrix $A$ of the form where the first four rows are
    \[ \begin{pmatrix}
        1 & \alpha_i^2 & \alpha_i^6 & \alpha_i & \alpha_i^3 & \alpha_i^5 
    \end{pmatrix}\]
    for $i=1,2,3,4$ and the last two are
    \[\begin{pmatrix}
        1 & (\alpha_k+x)^2 & \alpha_k^3 & \alpha_k+x & \alpha_k^3 & \alpha_k^3(\alpha_k+x)\\
        1 & (\alpha_l+x)^2 & \alpha_l^3 & \alpha_l+x & \alpha_l^3 & \alpha_l^3(\alpha_l+x)
    \end{pmatrix}\]
    for pairwise different $l,k \in \{1,2,3,4\}$. By a closer look at the Leibniz formula
    \[\det(A) = \sum_{\sigma \in \fS_6}\left(\mathrm{sgn}(\sigma)\prod_{i=1}^6a_{i\sigma(i)}\right)\]
    one sees that an upper bound for the degree of the polynomial is 3. Similarly, one sees that the bound for the other two kinds of matrices is 4. 
    
    Finally, we consider the $10 \times 10$ matrix obtained from removing the last row of the $11 \times 10$ matrix from the third condition. We have two orbits to study here. But again a closer look to the Leibniz formula yields that a degree bounds are 8 and 9. Summing up all degrees we get 55. Hence for all $q\geq 59 $ there exist two degree 4 points in general position in $\bP^2_{\bF_q}$. Algorithm 1 proves the existence for all $2 \leq q \leq 53$.  
\end{proof}

\subsection{Sextics in \texorpdfstring{$\bP_{\bF_q}(1,1,2,3)$}{P\_F\_q(1,1,2,3)}}\label{sec: sextics}

We saw in \S\! \ref{sec: antican systems} that every del Pezzo surface of degree 1 over a field $k$ is a smooth sextic in $\bP_k(1,1,2,3)$ and vice versa. The singular points in $\bP_k(1,1,2,3)$ are $[0:0:1:0]$ and $[0:0:0:1]$. Hence, a smooth sextic in $\bP_k(1,1,2,3)$ is always given by the zero locus of a polynomial of the form

\[w^2+z^3+wzf_1(x,y)+z^2f_2(x,y)+wf_3(x,y)+zf_4(x,y)+f_6(x,y),\]
where $f_i\in k[x,y]$ is homogeneous of degree $i$. In what follows, the variables $x$ and $y$ will always have weight 1, the variable $z$ weight 2 and $w$ weight 3.

Due to \cite[Theorem 1.2]{alvaradoexplicit-1}, we have an explicit description of the $(-1)$-curves:
\begin{theo}
    Let $X$ be a del Pezzo surface of degree $1$ over a perfect field $k$ given by $V(f(x,y,z,w)) \subseteq \bP_k(1,1,2,3)$. Let $Q(x,y) \in \overline{k}[x,y]$ be homogeneous of degree $2$ and $C(x,y) \in \overline{k}[x,y]$ be homogeneous of degree $3$. If $V(z-Q(x,y),w-C(x,y))$ is a divisor on $\overline{X}$, then it is a $(-1)$-curve. Conversely, every $(-1)$-curve on $\overline{X}$  is of this form. 
\end{theo}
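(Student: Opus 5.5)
The plan is to use the explicit description of the anticanonical ring: $H^0(X,-K_X)$ is spanned by $x,y$, $H^0(X,-2K_X)$ by $x^2,xy,y^2,z$, and $H^0(X,-3K_X)$ by the cubics in $x,y$ together with $xz,yz,w$. Both directions come down to restricting these sections to a curve of anticanonical degree $1$. Throughout I would work over $\overline{k}$.

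\emph{Every $(-1)$-curve has this form.} Let $E\subseteq \overline{X}$ be a $(-1)$-curve. By adjunction, $E\cdot(-K_X)=1$, so $-K_X|_E\cong\mathcal{O}_{\bP^1}(1)$.

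1. First I show that restriction $H^0(\overline{X},-K_X)\to H^0(E,\mathcal{O}(1))$ is injective. Its kernel is $H^0(-K_X-E)$. The class $-K_X-E$ satisfies $(-K_X)\cdot(-K_X-E)=1-1=0$ and is nonzero, since $(-K_X-E)^2=1-2-1=-2$. Because $-K_X$ is ample, an effective class of anticanonical degree $0$ must be $0$, so $-K_X-E$ is not effective and the kernel vanishes.

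2. Both spaces are $2$-dimensional, so the restriction is an isomorphism. Hence $s=x|_E$ and $t=y|_E$ are homogeneous coordinates on $E\cong\bP^1$.

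3. Next, $z|_E\in H^0(E,\mathcal{O}(2))=\langle s^2,st,t^2\rangle$, so $z|_E=Q(s,t)$ for a unique quadratic form $Q$. Likewise $w|_E\in H^0(E,\mathcal{O}(3))$ gives $w|_E=C(s,t)$ for a unique cubic form $C$.

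4. Therefore $E\subseteq V(z-Q(x,y),\,w-C(x,y))$. Since $E\subseteq X$, substituting gives $f(x,y,Q,C)\equiv 0$ as a binary sextic. The scheme $V(z-Q,w-C)\subseteq\bP(1,1,2,3)$ is the image of $[s:t]\mapsto[s:t:Q(s,t):C(s,t)]$, an irreducible reduced curve isomorphic to $\bP^1$. It lies in $X$ and contains $E$, so the divisor equals $E$.

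\emph{Conversely, every such divisor is a $(-1)$-curve.} Suppose $D=V(z-Q,w-C)\cap\overline{X}$ is a divisor.

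1. In the weighted projective space, $V(z-Q,w-C)$ is already a curve: it is the image $\Gamma$ of the map $[s:t]\mapsto[s:t:Q:C]$. This map is well defined and injective because $s,t$ are never both zero, and it is a closed immersion since $x,y$ recover $s,t$. So $\Gamma\cong\bP^1$ is smooth and avoids the singular points of $\bP(1,1,2,3)$.

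2. For $D$ to be a divisor we need $\Gamma\subseteq X$. That is, $f(s,t,Q,C)$ vanishes identically, and then $D=\Gamma$ scheme-theoretically: modulo $(z-Q,w-C)$ the equation $f$ becomes $0$, so the ideal of $D$ is exactly that of $\Gamma$.

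3. Now $D\cdot(-K_X)$ is computed by intersecting with $\{x=0\}$. On $\Gamma$ this is the single reduced point $t$-coordinate only, i.e.\ $s=0$, so $D\cdot(-K_X)=1$.

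4. Adjunction on a smooth rational curve gives $D^2+D\cdot K_X=-2$, hence $D^2=-1$. Thus $D$ is a $(-1)$-curve.

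The main obstacle is step 1 of the first direction: showing that $x,y$ restrict to a basis of sections on $E$. This is the step that uses ampleness of $-K_X$. The rest is bookkeeping, where the subtle points are scheme-theoretic equality and avoiding the singular points of $\bP(1,1,2,3)$.
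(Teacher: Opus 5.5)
Your proof is correct. It necessarily takes a different route from the paper, which gives no argument and simply imports the statement from \cite[Theorem 1.2]{alvaradoexplicit-1}.

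Your version is self-contained. In the forward direction:
\begin{enumerate}
\item Adjunction gives $E\cdot(-K_X)=1$.
\item Ampleness of $-K_X$ kills $H^0(-K_X-E)$, so $x,y$ restrict to a basis of $H^0(E,\mathcal{O}_{\bP^1}(1))$.
\item Hence $z|_E$ and $w|_E$ are forced to be binary forms in $x|_E$ and $y|_E$.
\end{enumerate}
The converse is adjunction on the explicit rational curve $\Gamma$. Nothing depends on the characteristic, which is reassuring since the paper uses the result over $\bF_2$, $\bF_3$ and $\bF_4$.

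Your argument also makes explicit something the quoted statement leaves implicit. The uniqueness of $Q$ and $C$ in your step 3, together with the fact that $\Gamma$ determines $(Q,C)$, shows that $E\mapsto(Q,C)$ is a Galois-equivariant bijection. It matches $(-1)$-curves on $\overline{X}$ with pairs satisfying $f(x,y,Q,C)\equiv 0$. Consequently a $(-1)$-curve is defined over $\bF_{q^n}$ if and only if $Q$ and $C$ have coefficients in $\bF_{q^n}$. This is exactly what Algorithm 2 relies on when it counts such pairs in order to count $(-1)$-curves over $\bF_{q^n}$.

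When writing it up, make two points explicit.
\begin{enumerate}
\item State that $\mathcal{O}_X(1)\cong\omega_X^{-1}$ (adjunction: $6-1-1-2-3=-1$). Also state that $X$ misses the singular points $[0:0:1:0]$ and $[0:0:0:1]$, because $f$ contains $w^2+z^3$. Then the sheaves $\mathcal{O}_X(n)$ are invertible with $\mathcal{O}_X(n)\cong\mathcal{O}_X(1)^{\otimes n}$. This is what makes $z|_E=Q(x|_E,y|_E)$ an identity of sections of $\mathcal{O}_E(2)$.
\item Step 3 of the converse is garbled as written. The point is that under $\bP^1\cong\Gamma$ the section $x$ pulls back to the linear form $s$. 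This form is not identically zero and vanishes only at $[0:1]$, to order one, so $\deg(-K_X|_D)=1$.
\end{enumerate}
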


\begin{defi}\label{Definition ot sequ}
    Let $X$ be a del Pezzo surface of degree $1$ over $\bF_q$. Denote by $a_n$ be the number of $\bF_{q^n}$-rational points on $X$, and by $b_n$ the number of orbits of $(-1)$-curves of length $n$ of $X$. We call the sequence
    \[(a_n,b_n)_{n \in \mathbb{N}}\]
    the \emph{Orbit-Trace-sequence}. 
\end{defi}

\begin{rema}
    We computed this sequence for all 112 types of degree 1 del Pezzo surfaces, cf.\ \cite{Karras}. In most cases, it is enough to know the sequence up to $n=2$ or $3$. Unfortunately, one has to compute the sequence up to $a_7$ to distinguish the types 40 (and its Bertini twist type 95) and 61 (and its Bertini twist 79). Type 40 was considered in the last section. To find an example of type 79 over $\bF_3$, we use a refined point count, and this type is treated in an extra Magma code, cf.\ \cite{Karras}. 
\end{rema}

Given a smooth sextic, one can compute this sequence as indicated by the algorithm below.

\noindent \textbf{Algorithm 2:} Points and $(-1)$-curves on Sextices in $\bP_{\bF_q}(1,1,2,3)$ 

\noindent \textbf{Input:} A smooth sextic $X=V(f(x,y,z,w))$ in $\bP_{\bF_q}(1,1,2,3)$ and a sequence of non-negative integers $n_1,\dots,n_r$

\noindent\textbf{Output:} The number of $\bF_q$-points on $X$ and for each $n_i$ the number of $(-1)$-curves defined over $\bF_{q^{n_i}}$ 
\begin{enumerate}
    \item Count the number of solutions of $f$ up to projective equivalence
    \item For each $n_i$ count the number of pairs $(s(x,y),t(x,y))$, where $s$ (resp. $t$) is a homogeneous degree 2 (resp. 3) form in $\bF_{q^{n_i}}[x,y]$ such that $f(x,y,s(x,y),t(x,y))=0$ 
\end{enumerate}

In our Magma implementation, we use a slightly different point count as implemented in \cite{Loughran19}. In that way our time complexity is $\cO(q^3)$, instead of $\cO(q^4)$. To determine the number of orbits as indicated in Algorithm 2, we do not have to loop over all $s$ and $t$. Instead, we do the following: For each quadratic polynomial $s$ the expression $f(x,y,s(x,y),T) \in \bF_q[x,y][T]$ defines a quadratic polynomial in $T$ with coefficients in the ring $\bF_q[x,y]$. Now, we just have to compute the number of solutions of this polynomial in $T$. Magma can compute the factorization of such a polynomial and hence we just have to count the number of different factors.

In the remaining part of this section, we study the equations in more detail. The first result is the following proposition, which is well-know. For the sake of completeness, we recall the simple proof. 

\begin{prop}\label{Proposition quadratic forms finite fields}
    Let $X$ be a del Pezzo surface of degree $1$ over a field $k$. Then, depending on $\mathrm{char}(k)$, we have the following normal forms for $X$, where $f_i \in k[x,y]$ is homogeneous of degree $i$:
    \[X \cong \begin{cases}
        V(w^2+z^3+zf_4(x,y)+f_6(x,y)), \: \text{if} \: \mathrm{char}(k) \ne 2,3, \\
        V(w^2+z^3+z^2f_2(x,y)+zf_4(x,y)+f_6(x,y)), \: \text{if} \: \mathrm{char}(k) \ne 2,\\
        V(w^2+z^3+wzf_1(x,y)+wf_3(x,y)+zf_4(x,y)+f_6(x,y)), \: \text{if} \: \mathrm{char}(k) \ne 3. \\
    \end{cases}\]
    
\end{prop}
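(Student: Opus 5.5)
We start from the general form established just above: by the preceding discussion, $X$ is isomorphic to a smooth sextic $V(F)\subseteq \bP_k(1,1,2,3)$ with
\[F=w^2+z^3+wzf_1+z^2f_2+wf_3+zf_4+f_6 .\]
Here we use that the coefficients of $w^2$ and $z^3$ are nonzero because $[0:0:0:1]$ and $[0:0:1:0]$ are not on $X$. We then rescale $w$ and $z$ to make these coefficients $1$: first scale $w$ so the $w^2$ coefficient is $1$, then replace $(z,w)$ by $(\lambda^2 z,\lambda^3 w)$ and $(x,y)$ by $(\lambda x,\lambda y)$ to normalize $z^3$.

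The idea is to use weighted-homogeneous coordinate changes of $\bP_k(1,1,2,3)$ of the form $w\mapsto w+a\,zg_1(x,y)+g_3(x,y)$ and $z\mapsto z+h_2(x,y)$. Here $g_1,g_3,h_2$ are forms of degree $1,3,2$; such substitutions preserve the weights and are automorphisms of the weighted projective space.

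\textbf{Case $\mathrm{char}(k)\ne 2$.} Complete the square in $w$. The substitution $w\mapsto w-\tfrac12(zf_1+f_3)$ turns $w^2+w(zf_1+f_3)$ into $w^2-\tfrac14(zf_1+f_3)^2$. Expanding, $-\tfrac14 z^2f_1^2$ is absorbed into $z^2f_2$, $-\tfrac12 zf_1f_3$ into $zf_4$, and $-\tfrac14 f_3^2$ into $f_6$. This yields the second form.

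\textbf{Case $\mathrm{char}(k)\ne 2,3$.} Start from the second form and complete the cube in $z$. The substitution $z\mapsto z-\tfrac13 f_2$ kills the $z^2$ term. The resulting terms of weight $6$ are absorbed into new $f_4$ and $f_6$, giving the first form.

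\textbf{Case $\mathrm{char}(k)\ne 3$.} Start from the general form, where $w$-terms may remain, and apply only $z\mapsto z-\tfrac13 f_2$. This removes $z^2f_2$. The cross term $wzf_1$ then produces $-\tfrac13 wf_1f_2$, which is absorbed into $wf_3$. The $z^3$ expansion only changes $f_4$ and $f_6$. This yields the third form.

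The only step requiring any care is checking that each substitution respects the grading, so that it defines an automorphism of $\bP_k(1,1,2,3)$ over $k$, and that it does not reintroduce the monomials already eliminated. This bookkeeping is routine; there is no real obstacle beyond it. Smoothness is preserved because we are applying isomorphisms.
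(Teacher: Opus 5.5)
Your argument is the paper's own: complete the square in $w$ via $w\mapsto w-\tfrac12(zf_1+f_3)$ when $\mathrm{char}(k)\ne 2$, and complete the cube in $z$ via $z\mapsto z-\tfrac13 f_2$ when $\mathrm{char}(k)\ne 3$. Your bookkeeping of where the new terms are absorbed is correct, including $-\tfrac13 wf_1f_2$ going into $wf_3$ in the third case, and it is more explicit than the paper's two-line proof.

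One side remark is wrong, though it concerns a normalization that the paper simply takes from the preceding discussion. The substitution $(x,y,z,w)\mapsto(\lambda x,\lambda y,\lambda^2 z,\lambda^3 w)$ is the identity on $\bP_k(1,1,2,3)$ and only multiplies $F$ by $\lambda^6$. Scaling $(z,w)$ alone by $(\lambda^2,\lambda^3)$ likewise multiplies both $z^3$ and $w^2$ by $\lambda^6$. So neither can change the ratio of the $z^3$- and $w^2$-coefficients. Also, making the $w^2$-coefficient equal to $1$ by scaling $w$ would require a square root in $k$.

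The fix is to scale $z$ and $w$ independently. If $F=aw^2+bz^3+\cdots$ with $a,b\in k^\times$, substitute $z\mapsto (a/b)z$ and $w\mapsto (a/b)w$. This makes both coefficients equal to $a^3/b^2$, and then you divide $F$ by $a^3/b^2$. This works over every field.
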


\begin{proof}
    The transformation 
    \[w \mapsto w - \frac{zf_1(x,y)+f_3(x,y)}{2}\]
    works in $\mathrm{char}(k)\ne 2$ and kills $f_1(x,y)$ and $f_3(x,y)$. The transformation 
    \[z \mapsto z-\frac{f_2(x,y)}{3}\]
    works in $\mathrm{char}(k)\ne 3$ and kills $f_2(x,y)$. 
\end{proof}

These normal forms are already very useful to produce examples of certain types. In Magma, we randomly generate equations over a fixed finite field $\bF_q$ and compute the Orbit-Trace-sequence, or less data if possible, as indicated above. A list of the obtained examples and their types is provided in \cite{Karras}. They can be verified using our Magma code. In combination with the lower bound from \S\! \ref{sec: lower bound}, this already proves the existence claims in Theorem \ref{main theo 1} and \ref{main theo 2}. We next deal with the non-existence claims. 

As already remarked in \S\! \ref{sec: lower bound}, the non-existence for certain types already follows from the non-existence of the corresponding degree 2 del Pezzo surface. For the remaining cases, we still have that many types do not exist over $\bF_2$ and $\bF_3$. Furthermore, it turns out that type 19 does not exist over $\bF_4$ and type 8 does not exist over $\bF_7$. The number of equations in the above normal forms over these fields are the following:
\begin{enumerate}
    \item $\bF_2$: $2^{18}= 262144$,
    \item $\bF_3$: $3^{15}=14348907$,
    \item $\bF_4$: $4^{18}= 68719476736$,
    \item $\bF_7$: $7^{12}= 13841287201$.
\end{enumerate}

Our Magma code needs only seconds or minutes (depending on how many Galois orbits and rational points over field extensions one computes for each surface) to loop over all $2^{18}$ surfaces over $\bF_2$. A loop over the $3^{15}$ surfaces over $\bF_3$ is also possible but needs several hours. Since it is in this case quite easy to derive better normal forms (in the sense that the total number of equations is much less), we will do so. 

To derive better normal forms over $\bF_3$ (resp. $\bF_4$), we need the classification of binary quadratic (resp. cubic) forms over these fields.

\begin{prop}\label{Proposition class quadr forms odd q}
    Let $\bF_q$ be a finite field with $q$ odd. Let $0 \ne f \in \bF_q[x_1,\dots,x_n]$ be a homogeneous degree $2$ form. Let $a \in \bF_q$ be a non-square. Then $f$ is $\mathrm{GL}(n,\bF_q)$-equivalent to one of the following forms: $x_1^2$, $ax_1^2$, $x_1^2+x_2^2$, $x_1^2+ax_2^2$, $\dots$, $x_1^2+\dots+x_n^2$, $x_1^2+\dots+ax_n^2$. 
\end{prop}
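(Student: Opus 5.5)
The plan is the usual diagonalization of quadratic forms over $\bF_q$ with $q$ odd, followed by normalizing the scalars using that $\bF_q^\times/(\bF_q^\times)^2$ has order $2$.

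First, since $2$ is invertible, attach to $f$ the symmetric bilinear form $B(u,v)=\tfrac12\bigl(f(u+v)-f(u)-f(v)\bigr)$, so that $f(v)=B(v,v)$. Let $\mathrm{rad}(B)$ be the radical. Choosing a basis adapted to a complement of $\mathrm{rad}(B)$ reduces us to the case where $f$ is nondegenerate in $m\le n$ variables $x_1,\dots,x_m$. Here $m\ge 1$ because $f\neq 0$.

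Second, diagonalize by induction on $m$. A nonzero nondegenerate form represents some $c\neq 0$: otherwise $B(v,v)=0$ for all $v$, and polarization gives $B=0$. Pick $v_1$ with $f(v_1)=c\neq0$. Then $V=\langle v_1\rangle\oplus v_1^{\perp}$, and we recurse on $v_1^\perp$. This yields $f\sim c_1x_1^2+\dots+c_mx_m^2$ with all $c_i\neq 0$. Rescaling each $x_i$ by an element of $\bF_q^\times$ lets us assume every $c_i\in\{1,a\}$.

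Third, the key step, which I expect to be the only real obstacle, is to show that $x_1^2+x_2^2$ represents every element of $\bF_q$, and in particular $a$. The counting argument is: the sets $\{s^2\}$ and $\{a-t^2\}$ each have $(q+1)/2$ elements, so they intersect. This gives $s,t$ with $s^2+t^2=a$, and neither $s$ nor $t$ can be $0$ since $a$ is a non-square.

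With that, the binary form $ax_1^2+ax_2^2$ represents $1$. By the splitting argument of the second step, $ax_1^2+ax_2^2\sim x_1^2+dx_2^2$, and comparing discriminants modulo squares ($a^2$ is a square) gives $d\equiv 1$. Hence $ax_1^2+ax_2^2\sim x_1^2+x_2^2$. Applying this repeatedly to pairs of coefficients equal to $a$ leaves at most one coefficient equal to $a$, which we move to the last position. This gives exactly the forms listed in the statement, namely $x_1^2+\dots+x_m^2$ or $x_1^2+\dots+x_{m-1}^2+ax_m^2$ for $1\le m\le n$.
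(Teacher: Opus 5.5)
Your proof is correct and follows the same route as the paper: diagonalize, then use that every nondegenerate binary form over $\bF_q$ represents $1$ to collapse the coefficients. Your pigeonhole count of squares is exactly the content of the remark from Niederreiter that the paper cites. The only differences are bookkeeping. You prove the diagonalization instead of citing it, and you first normalize all coefficients to $\{1,a\}$ and then cancel pairs of $a$'s via the discriminant. The paper instead inductively rewrites $a_ix_i^2+a_{i+1}x_{i+1}^2$ as $x_i^2+bx_{i+1}^2$, pushing everything into the last coefficient.
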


\begin{proof}
    By \cite[Theorem 6.21]{Niederreiterfinitefields} $f$ is equivalent to a diagonal quadratic form, i.e. we can write $f = a_1x_1^2+\dots+a_nx_n^2$. Without loss of generality, we may assume that $f$ is non-degenerate and hence all $a_i$ are non-zero. By \cite[Remark 6.25]{Niederreiterfinitefields}, the equation $a_1x_1^2+a_2x_2^2=1$ always has a solution and hence by \cite[Lemma 6.20]{Niederreiterfinitefields} this form is equivalent to $x_1^2+bx_2^2$ for some $b \in \bF_q$. Inductively, this implies the claim.  
\end{proof}

\begin{prop}
   Let $\bF_4=\bF_2(\alpha)$ with $\alpha^2+\alpha+1=0$ be the field with four elements. Let $0 \ne f \in \bF_4[x,y]$ be a cubic form. Then $f$ is $\mathrm{GL}(2,\bF_4)$-equivalent to one of the following $13$ forms: $x^3$, $\alpha x^3$, $\alpha^2x^3$, $xy^2$, $x^3+y^3$, $\alpha(x^3+y^3)$, $\alpha^2(x^3+y^3)$, $x(x^2+xy+\alpha y^2)$, $\alpha x(x^2+xy+\alpha y^2)$, $\alpha^2 x(x^2+xy+\alpha y^2)$, $x^3+\alpha y^3$, $\alpha(x^3+\alpha y^3)$, $\alpha^2(x^3+\alpha y^3)$.   
\end{prop}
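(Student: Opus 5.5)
We classify nonzero binary cubic forms over $\bF_4$ by how they factor over $\overline{\bF_4}$, then normalize scalars. A nonzero cubic form $f\in\bF_4[x,y]$ defines a degree-3 divisor on $\bP^1_{\bF_4}$, and $\mathrm{PGL}(2,\bF_4)$ acts on such divisors. We first classify the divisor up to projective equivalence, then handle the remaining scalar using the center of $\mathrm{GL}(2,\bF_4)$. A scalar matrix $\lambda I$ multiplies a cubic by $\lambda^3=1$ for every $\lambda\in\bF_4^\times$, since $\bF_4^\times$ has order $3$. So scalars act trivially, and the leading constant in $\bF_4^\times$ is a genuine extra invariant only when the stabilizer of the divisor cannot absorb it. This is why most classes come in triples $c\cdot f$ with $c\in\{1,\alpha,\alpha^2\}$.

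The root configurations of a cubic over $\bF_4$ are:
\begin{enumerate}
\item a triple rational root;
\item a double rational root plus a simple rational root;
\item three distinct rational roots;
\item a rational root plus a conjugate pair of degree-2 points;
\item a degree-3 point.
\end{enumerate}

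Since $\mathrm{PGL}(2,\bF_4)$ acts $3$-transitively on $\bP^1(\bF_4)$, we move rational roots to $0$, $\infty$, $1$. This gives the models $x^3$, $xy^2$ and $xy(x+y)$ for the first three cases. For the fourth case we put the rational root at $\infty$ and translate the quadratic factor to $x^2+xy+\alpha y^2$, which is irreducible over $\bF_4$ because $t^2+t+\alpha$ has no root there (check directly), getting $x(x^2+xy+\alpha y^2)$ after swapping roles. For the fifth case, $\mathrm{PGL}(2,\bF_4)$ acts transitively on degree-3 points, since the group has order $60$ and there are $(64-4)/3=20$ such points with stabilizers of order $3$; the model is $x^3+\alpha y^3$, using that $\alpha$ is a noncube in $\bF_4$. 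Note that $xy(x+y)$ is equivalent to $x^3+y^3$, because $x^3+y^3$ splits into three distinct linear factors over $\bF_4$ as the cube roots of unity lie in $\bF_4$.

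Next we decide, for each model $f$, which multiples $cf$ are equivalent. We compute the image of the stabilizer of the divisor in $\mathrm{GL}(2,\bF_4)$ acting on the scalar.

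For $xy^2$, the substitution $x\mapsto cx$ already realizes every $c$, so there is one class.

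For $x^3$, a map $x\mapsto ax+by$ preserving $V(x)$ must have $b=0$. It then sends $x^3$ to $a^3x^3=x^3$, so the three multiples are inequivalent.

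For the remaining three models, the main step is to show that the scalar cannot be absorbed. Any element of $\mathrm{GL}_2(\bF_4)$ changes the form by a character of the stabilizer. We plan to use the determinant-type invariant: for binary cubics, the discriminant transforms as $\det(g)^6\,\mathrm{disc}$, and $\det(g)^6=1$ in $\bF_4$, while $\mathrm{disc}(cf)=c^4\,\mathrm{disc}(f)=c\,\mathrm{disc}(f)$. So whenever the discriminant is nonzero, the classes $f,\alpha f,\alpha^2 f$ are distinct. In characteristic $2$ one must use the appropriate integral discriminant; this is the delicate point, and if it is problematic we fall back to a direct finite check of stabilizers, which involves only $|\mathrm{GL}_2(\bF_4)|=180$ elements. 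For the degenerate models the discriminant vanishes, and we handle them by the direct stabilizer computations above.

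Counting gives $3+1+3+3+3=13$ classes, matching the statement.
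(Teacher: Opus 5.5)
Your proposal is correct, but it is organized differently from the paper's proof.

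The paper concentrates on the irreducible case. It shows that $\mathrm{PGL}(2,\bF_4)$ acts freely on the $60$ points of $\bP^1(\bF_{64})\setminus\bP^1(\bF_4)$: a matrix $(a_{ij})$ fixing such a $\theta$ gives $a_{21}\theta^2+(a_{22}-a_{11})\theta-a_{12}=0$, which forces the matrix to be scalar since $\theta$ has degree $3$ over $\bF_4$. Since the group has order $60$, the action is simply transitive, and the paper then passes to the $20$ Galois orbits. The reducible cases are dispatched by citing the classification of binary quadratic forms in even characteristic and saying ``consider the different factorizations''. Scalars are handled only by the remark that scalar matrices act trivially.

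You instead treat all five factorization types uniformly:
\begin{enumerate}
\item $3$-transitivity for the split types;
\item an explicit normalization of the quadratic factor inside the stabilizer of the rational root, using that $t^2+t$ takes only the values $0,1$ on $\bF_4$; this also settles the simultaneous normalization of the linear and quadratic factors, which the paper leaves implicit;
\item a discriminant invariant for the scalar.
\end{enumerate}

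The last step is the real gain of your route. The statement itself only asserts exhaustiveness, but your invariant also proves that the $13$ forms are pairwise inequivalent, which the paper's proof only implicitly claims.

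Your worry about characteristic $2$ is unnecessary. For $f=a_0x^3+a_1x^2y+a_2xy^2+a_3y^3$, both the discriminant $\Delta=a_1^2a_2^2-4a_0a_2^3-4a_1^3a_3-27a_0^2a_3^2+18a_0a_1a_2a_3$ and the law $\Delta(f\circ g)=\det(g)^6\Delta(f)$ are identities over $\bZ$, so they hold over $\bF_4$. There $\Delta=(a_0a_3+a_1a_2)^2$, which equals $1$, $\alpha^2$, $\alpha^2$ on $x^3+y^3$, $x(x^2+xy+\alpha y^2)$, $x^3+\alpha y^3$ respectively. Together with $\det(g)^6=1$ and $\Delta(cf)=c\,\Delta(f)$, this separates the three scalar multiples, so no fallback is needed.

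The one step you should not merely assert is that degree-$3$ points have stabilizers of order $3$. That is exactly what turns your orbit count into transitivity, and it requires an upper bound on the stabilizer. The paper's free-action argument above supplies it: the stabilizer of $\{\theta,\theta^4,\theta^{16}\}$ then acts freely on a $3$-element set, so it has order at most $3$.
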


\begin{proof}
  From the classification of binary quadratic forms over $\bF_q$ for even $q$, cf.\ \cite[Theorem 6.30]{Niederreiterfinitefields}, it follows that up to $\mathrm{GL}(2,\bF_4)$-equivalence, there is exactly one irreducible quadratic form. One easily verifies that $x^2+xy+\alpha y^2$ is a representative. To identify all irreducible cubic forms, it is helpful to consider this geometrically: Up to scaling, such a form corresponds to three Galois conjugated points in $\bP^1_{\bF_{64}}$. Now, the number of points in $\bP^1_{\bF_{64}}$ strictly defined over $\bF_{64}$ is 60. Denote the set of these points by $\cP$. The group $\mathrm{PGL}(2,\bF_4)$ has order 60 and acts on $\cP$. We claim that each element $[x:1] \in \cP$ has a trivial stabilizer. Indeed, if a matrix $(a_{ij})\in \mathrm{PGL}(2,\bF_4)$ fixes $[x:1]$, there exists a $\theta \in \bF_{64}^\times$ such that 
  \[\begin{pmatrix}
      a_{11} & a_{12}\\
      a_{21} & a_{22}
  \end{pmatrix}\begin{pmatrix}
      x\\
      1
  \end{pmatrix} = \begin{pmatrix}
      \theta x\\
      \theta
  \end{pmatrix}.\]
  But then $x$ satisfies a quadratic relation over $\bF_4$, which contradicts the assumption that $x$ is strictly defined over $\bF_{64}$. It follows that this action has precisely one orbit by the orbit-stabilizer formula. Let $\cP'$ be the set $\cP$ modulo the Galois action on $\cP$. Then $\cP'$ has cardinality 20. Observe that the action of $\mathrm{PGL}(2,\bF_4)$ on $\cP$ induces a well-defined action on $\cP'$, which is again transitive. Again, by the orbit-stabilizer formula, each stabilizer has cardinality 3. 

  Consequently, since $\bF_4$ acts trivially on cubic forms, there are three irreducible cubic forms up to $\mathrm{GL}(2,\bF_4)$-equivalence. One can now consider the different factorizations of a cubic form and obtain in that way the above list. 
\end{proof}

The above propositions can be used to derive normal forms for degree 1 del Pezzo surfaces. The precise equations are listed in the appendix. They are a result of the above transformations in $x$, $y$ and a further shift in $z$. Over $\bF_3$ this reduces the number of equations to $5 \cdot 3^9= 98415$. Our code loops over these surfaces in seconds. Over $\bF_4$ the number is reduced to $13 \cdot 4^9 +4^8+4^{12}= 20250624$. Our code needs roughly one day for a complete loop. 

We now come to equations over $\bF_7$. A similar strategy as above would mean to classify binary quartic forms over $\bF_7$ up to $\mathrm{GL}(2,\bF_7)$-equivalence. This seems to be more complicated and instead, we are satisfied with normal forms where we have certain redundancies.

\begin{prop}
    Let $0\ne f=ax^4+bx^3y+cx^2y^2+dxy^3+ey^4 \in \bF_7[x,y]$ be a binary quartic form. Then $f$ is $\mathrm{GL}(2,\bF_7)$-equivalent to one of the following normal forms:
    \begin{enumerate}
        \item $ax^4+bx^3y+cx^2y^2+dxy^3+ey^4$ with $a,c \in \{1,3\}$, $b=0$, $d \in \{0,1,2,3\}$ and $e\in \bF_7$,
        \item $ax^4+bx^3y+cx^2y^2+dxy^3+ey^4$ with $a\in \{1,3\}$, $b=c=0$, $d\in \{0,1,2,3\}$ and $e \in \{0,1,3\}$.
    \end{enumerate}
\end{prop}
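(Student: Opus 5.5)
We would argue directly with the $\mathrm{GL}(2,\bF_7)$-action, using only three kinds of substitutions: moving a point of $\bP^1(\bF_7)$ to $[1:0]$, the shear $x \mapsto x+ty$, and the diagonal scalings $x\mapsto\lambda x$, $y\mapsto \mu y$. Throughout we use the structure of $\bF_7^\times$, which is cyclic of order $6$. Its squares are $\{1,2,4\}$, and $3$ is a non-square. Its fourth powers are also $\{1,2,4\}$, since the image of $\lambda\mapsto\lambda^4$ has index $\gcd(4,6)=2$. Its cubes are $\{\pm1\}$.

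\textbf{Step 1: make $a\neq 0$ and kill $b$.} A nonzero binary quartic cannot vanish at all $8$ points of $\bP^1(\bF_7)$, since it has at most $4$ roots there. So, after applying an element of $\mathrm{GL}(2,\bF_7)$ that moves a non-root to $[1:0]$, we may assume $a=f(1,0)\neq 0$. The shear $x\mapsto x+ty$ keeps $a$ and changes $b$ to $b+4at$. Since $4a$ is invertible in $\bF_7$, we can choose $t$ with $b+4at=0$, so from now on $b=0$.

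\textbf{Step 2: normalize $a$.} The scaling $x\mapsto \lambda x$ sends $a\mapsto\lambda^4a$, $c\mapsto\lambda^2c$ and $d\mapsto \lambda d$, and it preserves $b=0$. Since the fourth powers form the subgroup of squares, $a$ can be moved to one of the two coset representatives, namely $1$ or $3$. After this we no longer use $x$-scalings.

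\textbf{Step 3: normalize the remaining coefficients with $y\mapsto\mu y$.} This scaling fixes $a$ and $b$ and sends $c\mapsto\mu^2c$, $d\mapsto\mu^3d$, $e\mapsto\mu^4e$.
\begin{itemize}
\item \emph{If $c\neq0$:} choose $\mu^2$ so that $c\in\{1,3\}$. The residual freedom $\mu=\pm1$ keeps $c$ and $e$ and sends $d\mapsto -d$, so we may take $d\in\{0,1,2,3\}$, while $e\in\bF_7$ is arbitrary. This gives form (1).
\item \emph{If $c=0$ and $d=0$:} $\mu^4$ runs over the squares, so we may take $e\in\{0,1,3\}$.
\item \emph{If $c=0$ and $d\neq0$:} first choose the sign of $\mu^3=\pm1$ so that $d\in\{1,2,3\}$. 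The values of $\mu$ with that fixed cube form a coset of $\{1,2,4\}$, namely $\mu\in\{1,2,4\}$ or $\mu\in\{3,5,6\}$. On such a coset $\mu^4=\mu\cdot\mu^3=\pm\mu$, which again runs through all of $\{1,2,4\}$. Hence $e$ can still be moved into $\{0,1,3\}$ without changing $d$.
\end{itemize}
The two $c=0$ subcases give form (2).

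The only step needing care is the last subcase, where $d$ and $e$ must be normalized simultaneously by a single $\mu$. It is handled by the coset computation above, which shows that fixing $\mu^3$ still leaves $\mu^4$ free to range over all squares. The remaining steps are direct.
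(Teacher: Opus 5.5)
Your proof is correct and follows the same route as the paper. You arrange $a\neq 0$, kill $b$ with the shear $x \mapsto x+ty$, normalize $a$ by scaling $x$, and then normalize $c,d,e$ by scaling $y$. You also supply two details the paper leaves implicit: a non-root in $\bP^1(\bF_7)$ can be moved to $[1:0]$, which gives $a\neq 0$; and when $c=0$, a single $\mu$ fixes both the sign of $d$ and the square class of $e$, which your coset computation $\mu^4=\pm\mu$ verifies.
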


\begin{proof}
    Without loss of generality, we may assume that $a \ne 0$. By scaling $x$, we always obtain $a \in \{1,3\}$. Using a shift of the form $x \mapsto x +\lambda y$ for a certain $\lambda \in \bF_7$, we may also assume that $b=0$. If $c \ne 0$, it can be scaled by $y$ to 1 or 3, and the coefficient $d$ can at least be ensured to have a positive sign. The coefficient $e$ has to be arbitrary in this case. If $c=0$, we can scale $e$ using $y$ and again adjust the sign of $d$. 
\end{proof}

Since for the normal forms over $\bF_7$ we have no more changes in $z$ and $w$ they are just obtained by plugging in the above forms. These are now $136 \cdot 7^7 = 112001848$ equations. Our code needs roughly 3 days for a complete loop. 

Now all non-existence claims in Theorem \ref{main theo 1} and \ref{main theo 2} can be verified using the code provided in \cite{Karras}.

\subsection{Open cases}\label{sec: open cases}

The remaining cases may be divided into two classes: the minimal and the non-minimal surfaces. A nice surface over a field $k$ is called minimal if any birational morphism to any nice surface $X'$ is actually an isomorphism over $k$. It is not too difficult to see that a surface $X$ over a field $k$ is minimal if and only if there does not exist a Galois invariant set of pairwise disjoint $(-1)$-curves on $\overline{X}$. 

The non-minimal types can be obtained by blowing up certain types of degree 4 and 5 del Pezzo surfaces. It should be possible to derive a similar lower bound as we did in \S\! \ref{sec: lower bound} also in this case. 

The minimal types form the actual hard part of the problem. It is well known that a minimal del Pezzo surface of degree 1 or 2 either has arithmetic Picard rank 1 or 2, where in the latter case it has the structure of a conic bundle. In case of degree 2 del Pezzo surfaces, it turns out that any minimal surface has either the structure of a conic bundle or its Geiser twist is non-minimal. This fails for the degree 1 case and in particular for most of the types listed below. We refer to \cite{trepalin20}, \cite{kaya2025inversecurveproblemsdel}, and \cite{trip} for more details. In Table \ref{Table nonmin}, we list the open non-minimal types with their (also open) Bertini twist. In Table \ref{Table min}, we list the minimal open types, whose Bertini twist is also open and minimal.     Using the recent results obtained by Trip \cite{trip}, and using Theorem \ref{main theo 1} and \ref{main theo 2}, one also obtains a complete solutions for the types 66, 75, 76, 77, and their Bertini twist, which are marked with a $\ast$ in Tables \ref{Table nonmin} and \ref{Table min}.

\section{Appendix}\label{sec: appendix}

\renewcommand{\thetable}{\Roman{table}} 
\setcounter{table}{0} 

In the following, we collect some necessary data used in this paper. Tables \ref{Table WE7} and \ref{Table WE8} contain information related to cyclic conjugacy classes of the groups $W(E_7)$ and $W(E_8)$. 

The first column contains the number, and the second column contains the Carter symbol of such a class in the order of the tables in \cite{Carter} (note that Urabe in \cite{Urabe} uses a different order). 

The third column contains the orbit type. The notation $a^bc^d\dots$ means that there are $b$ orbits of size $a$, $d$ orbits of size $c$ and so on. This information can be found in \cite{Urabe} (note that we have summarized the notation in case $a^ba^c$ to $a^{(b+c)}$ since this information is enough for our purposes and makes the tables clearer). 

The fourth column contains the eigenvalues of the characteristic polynomial of an element in the conjugacy class. The $n$th root of unity is denoted by $\zeta_n$ where $\zeta_4=i$. 

Observe that with $w \in W(E_n)$ we can associate two vector space homomorphisms: the one on the Euclidean space of dimension $n$ containing the root system $E_n$ and the one on $N_n\otimes \bR$ of dimension $n+1$. It is not too difficult to see that if the former has the characteristic polynomial $\varphi(x)$ the latter has the characteristic polynomial $(x-1)\varphi(x)$. In the tables we list the eigenvalues of $\varphi(x)$. The polynomials can again be found in \cite{Urabe}. 

The fifth column contains the type of the Geiser and the Bertini twist. The last column contains the sum of the eigenvalues adding 1 so that we get the trace of the Frobenius action on the Picard group. 

Table \ref{Table H1} compares the values of $H^1(G,\mathrm{Pic}(\overline{X}))$ (see also Section 2.2) for certain types of del Pezzo surfaces of degrees 1 and 2. This information can again be found in \cite{Urabe}. 

Tables \ref{Table nonmin} and \ref{Table min} list the open non-minimal and minimal cases. 

We have found a small typo in Urabe's tables: In Table \ref{Table WE8}, type 9 has 116 orbits of length 2 (and not 115 as it is written in Urabe's tables). 

\begin{table}[H]
\footnotesize
    \centering
    \caption{$W(E_7)$}
    \label{Table WE7}
    \begin{tabular}{c c c c c c}
    \toprule
    Type & Carter Symbol & Orbit Type & Eigenvalues & Geiser & $a(X)$ \\ 
    \midrule
    1 & $\emptyset$ & $1^{56}$ & 1, 1, 1, 1, 1, 1, 1 & 49 & 8\\
    2 & $A_1$ & $1^{32}2^{12}$ & 1, 1, 1, 1, 1, 1, $-1$ & 31 & 6\\
    3 & $A_1^2$ & $1^{16}2^{20}$ & 1, 1, 1, 1, 1, $-1$, $-1$ & 18 & 4\\
    4 & $A_2$ & $1^{20}3^{12}$ & 1, 1, 1, 1, 1, $\zeta_3$, $\zeta_3^2$ & 53 & 5\\
    5 & $(A_1^3)'$ & $2^{28}$ & 1, 1, 1, 1, $-1$, $-1$, $-1$ & 10 & 2\\
    6 & $(A_1^3)''$ & $1^82^{24}$ & 1, 1, 1, 1, $-1$, $-1$, $-1$ & 9 & 2\\
    7 & $A_2 \times A_1$ & $1^82^63^86^2$ & 1, 1, 1, 1, $\zeta_3$, $\zeta_3^2$, $-1$ & 40 & 3\\
    8 & $A_3$ & $1^{12}2^24^{10}$ & 1, 1, 1, 1, $i$, $-i$, $-1$ & 33 & 4\\
    9 & $(A_1^4)'$ & $2^{28}$ & 1, 1, 1, $-1$, $-1$, $-1$, $-1$ & 6 & 0\\
    10 & $(A_1^4)''$ & $1^82^{24}$ & 1, 1, 1, $-1$, $-1$, $-1$, $-1$ & 5 & 0\\
    11 & $A_2\times A_1^2$ & $1^42^83^46^4$ & 1, 1, 1, $\zeta_3$, $\zeta_3^2$, $-1$, $-1$ & 27 & 1\\
    12 & $A_2^2$ & $1^23^{18}$ & 1, 1, 1, $\zeta_3$, $\zeta_3^2$, $\zeta_3$, $\zeta_3^2$ & 55 & 2\\
    13 & $(A_3\times A_1)'$ & $2^84^{10}$ & 1, 1, 1, $i$, $-i$, $-1$, $-1$ & 22 & 2\\
    14 & $(A_3\times A_1)''$ & $1^42^64^{10}$ & 1, 1, 1, $i$, $-i$, $-1$, $-1$ & 21 & 2\\
    15 & $A_4$ & $1^65^{10}$ & 1, 1, 1, $\zeta_5$, $\zeta_5^2$, $\zeta_5^3$, $\zeta_5^4$ & 54 & 3\\
    16 & $D_4$ & $1^82^66^6$ & 1, 1, 1, $-\zeta_3$,$-\zeta_3^2$,$-1$,$-1$ & 19 & 3\\
    17 & $D_4(a_1)$ & $1^84^{12}$ & 1, 1, 1, $i$, $i$, $-i$, $-i$ & 50 & 4\\
    18 & $A_1^5$ & $2^{28}$ & 1, 1, $-1$, $-1$, $-1$, $-1$, $-1$ & 3 & $-2$\\
    19 & $A_2\times A_1^3$ & $2^{10}6^6$ & 1, 1, $\zeta_3$, $\zeta_3^2$, $-1$, $-1$, $-1$ & 16 & $-1$\\
    20 & $A_2^2\times A_1$ & $1^23^{10}6^4$ & 1, 1, $\zeta_3$, $\zeta_3^2$, $\zeta_3$, $\zeta_3^2$, $-1$ & 45 & 0\\
    \bottomrule
    \end{tabular}
\end{table}

\newpage

\begin{table}[H]
\footnotesize
    \centering
    \ContinuedFloat
    \caption[]{$W(E_7)$ (continued) }
    \begin{tabular}{c c c c c c}
    \toprule
    Type & Carter Symbol & Orbit Type & Eigenvalues & Geiser & $a(X)$  \\ 
    \midrule
    21 & $(A_3\times A_1^2)'$ & $2^84^{10}$ & 1, 1, $i$, $-i$, $-1$, $-1$, $-1$ & 14 & 0\\
    22 & $(A_3\times A_1^2)''$ & $1^42^64^{10}$ & 1, 1, $i$, $-i$, $-1$, $-1$, $-1$ & 13 & 0\\
    23 & $A_3\times A_2$ & $2^23^44^412^2$ & 1, 1, $i$, $-i$, $-1$, $\zeta_3$, $\zeta_3^2$ & 42 & 1\\
    24 & $A_4 \times A_1$ & $1^22^25^610^2$ & 1, 1, $\zeta_5$, $\zeta_5^2$, $\zeta_5^3$, $\zeta_5^4$, $-1$ & 43 & 1\\
    25 & $(A_5)'$ & $2^16^9$ & 1, 1, $-\zeta_3^2$, $-\zeta_3$, $-1$, $\zeta_3^2$, $\zeta_3$ & 38 & 2\\
    26 & $(A_5)''$ & $1^23^26^8$ & 1, 1, $-\zeta_3^2$, $-\zeta_3$, $-1$, $\zeta_3^2$, $\zeta_3$ & 37 & 2\\
    27 & $D_4 \times A_1$ & $2^{10}6^6$ & 1, 1, $-\zeta_3$, $-\zeta_3^2$, $-1$, $-1$, $-1$ & 11 & 1\\
    28 & $D_4(a_1)\times A_1$ & $2^44^{12}$ & 1, 1, $i$, $i$, $-i$, $-i$, $-1$ & 35 & 2\\
    29 & $D_5$ & $1^42^28^6$ & 1, 1, $\zeta_8$, $\zeta_8^3$, $\zeta_8^5$, $\zeta_8^7$, $-1$ & 41 & 2\\
    30 & $D_5(a_1)$ & $1^44^46^212^2$ & 1, 1, $-\zeta_3$, $-\zeta_3^2$, $i$, $-i$, $-1$ & 34 & 3\\
    31 & $A_1^6$ & $2^{28}$ & 1, $-1$, $-1$, $-1$, $-1$, $-1$, $-1$ & 2 & $-4$\\
    32 & $A_2^3$ & $1^23^{18}$ & 1, $\zeta_3$, $\zeta_3^2$, $\zeta_3$, $\zeta_3^2$, $\zeta_3$, $\zeta_3^2$ & 60 & $-1$\\
    33 & $A_3\times A_1^3$ & $2^84^{10}$ & 1, $i$, $-i$, $-1$, $-1$, $-1$, $-1$ & 8 & $-2$\\
    34 & $A_3\times A_2\times A_1$ & $2^24^46^212^2$ & 1, $i$, $-i$, $\zeta_3$, $\zeta_3^2$, $-1$, $-1$ & 30 & $-1$\\
    35 & $A_3^2$ & $2^44^{12}$ & 1, $i$, $i$, $-i$, $-i$, $-1$, $-1$ & 28 & 0\\
    36 & $A_4\times A_2$ & $3^25^415^2$ & 1, $\zeta_5$, $\zeta_5^2$, $\zeta_5^3$, $\zeta_5^4$, $\zeta_3$, $\zeta_3^2$ & 59 & 0\\
    37 & $(A_5\times A_1)'$ & $2^16^9$ & 1, $-\zeta_3^2$, $-\zeta_3$, $\zeta_3^2$, $\zeta_3$, $-1$, $-1$ & 26 & 0\\
    38 & $(A_5\times A_1)''$ & $1^23^26^8$ & 1, $-\zeta_3^2$, $-\zeta_3$, $\zeta_3^2$, $\zeta_3$, $-1$, $-1$ & 25 & 0\\
    39 & $A_6$ & $7^8$ & 1, $\zeta_7$, $\zeta_7^2$, $\zeta_7^3$, $\zeta_7^4$, $\zeta_7^5$, $\zeta_7^6$ & 57 & 1\\
    40 & $D_4\times A_1^2$ & $2^{10}6^6$ & 1, $-\zeta_3$, $-\zeta_3^2$, $-1$, $-1$, $-1$, $-1$ & 7 & $-1$\\
    41 & $D_5 \times A_1$ & $2^48^6$ & 1, $\zeta_8$, $\zeta_8^3$, $\zeta_8^5$, $\zeta_8^7$, $-1$, $-1$ & 29 & 0\\
    42 & $D_5(a_1)\times A_1$ & $2^24^46^212^2$ & 1, $-\zeta_3$, $-\zeta_3^2$, $i$, $-i$, $-1$, $-1$ & 23 & 1\\
    43 & $D_6$ & $2^310^5$ & 1, $-1$, $-1$, $-\zeta_5$, $-\zeta_5^2$, $-\zeta_5^3$, $-\zeta_5^4$ & 24 & 1\\
    44 & $D_6(a_1)$ & $4^28^6$ & 1, $i$, $-i$, $\zeta_8$, $\zeta_8^3$, $\zeta_8^5$, $\zeta_8^7$ & 52 & 2\\
    45 & $D_6(a_2)$ & $2^16^9$ & 1, $-\zeta_3$, $-\zeta_3^2$, $-\zeta_3$, $-\zeta_3^2$, $-1$, $-1$ & 20 & 2\\
    46 & $E_6$ & $1^23^212^4$ & 1, $\zeta_3$, $\zeta_3^2$, $\zeta_{12}$, $\zeta_{12}^5$, $\zeta_{12}^7$, $\zeta_{12}^{11}$ & 58 & 1\\
    47 & $E_6(a_1)$ & $1^29^6$ & 1, $\zeta_9$, $\zeta_9^2$, $\zeta_9^4$, $\zeta_9^5$, $\zeta_9^7$, $\zeta_9^8$ & 56 & 2\\
    48 & $E_6(a_2)$ & $1^23^26^8$ & 1, $\zeta_3$, $\zeta_3^2$, $-\zeta_3$, $-\zeta_3$, $-\zeta_3^2$, $-\zeta_3^2$,  & 51 & 3\\
    49 & $A_1^7$ & $2^{28}$ & $-1$, $-1$, $-1$, $-1$, $-1$, $-1$, $-1$ & 1 & $-6$\\
    50 & $A_3^2\times A_1$ & $2^44^{12}$ & $-1$, $-1$, $-1$, $i$, $i$, $-i$, $-i$ & 17 & $-2$\\
    51 & $A_5\times A_2$ & $2^16^9$ & $-1$, $\zeta_3$, $\zeta_3$, $-\zeta_3$, $\zeta_3^2$, $\zeta_3^2$, $-\zeta_3^2$ & 48 & $-1$\\
    52 & $A_7$ & $4^28^6$ & $-1$, $i$, $-i$, $\zeta_8$, $\zeta_8^3$, $\zeta_8^5$, $\zeta_8^7$ & 44 & 0\\
    53 & $D_4 \times A_1^3$ & $2^{10}6^6$ & $-1$, $-1$, $-1$, $-1$, $-1$, $-\zeta_3$, $-\zeta_3^2$ & 4 & $-3$\\
    54 & $D_6\times A_1$ & $2^310^5$ & $-1$, $-1$, $-1$, $-\zeta_5$, $-\zeta_5^2$, $-\zeta_5^3$, $-\zeta_5^4$ & 15 & $-1$\\
    55 & $D_6(a_2)\times A_1$ & $2^16^9$ & $-1$, $-1$, $-1$, $-\zeta_3$, $-\zeta_3^2$, $-\zeta_3$, $-\zeta_3^2$ & 12 & 0\\
    56 & $E_7$ & $2^118^3$ & $-1$, $-\zeta_9$, $-\zeta_9^2$, $-\zeta_9^4$, $-\zeta_9^5$, $-\zeta_9^7$, $-\zeta_9^8$ & 47 & 0\\
    57 & $E_7(a_1)$ & $14^4$ & $-1$, $-\zeta_7$, $-\zeta_7^2$, $-\zeta_7^3$, $-\zeta_7^4$, $-\zeta_7^5$, $-\zeta_7^6$ & 39 & 1\\
    58 & $E_7(a_2)$ & $2^16^112^4$ & $-1$, $-\zeta_3$, $-\zeta_3^2$, $-\zeta_{12}$, $-\zeta_{12}^5$, $-\zeta_{12}^7$, $-\zeta_{12}^{11}$ & 46 & 1\\
    59 & $E_7(a_3)$ & $6^110^230^1$ & $-1$, $-\zeta_3$, $-\zeta_3^2$, $-\zeta_5$, $-\zeta_5^2$, $-\zeta_5^3$, $-\zeta_5^4$ & 36 & 2\\
    60 & $E_7(a_4)$ & $2^16^9$ & $-1$, $-\zeta_3$, $-\zeta_3^2$, $-\zeta_3$, $-\zeta_3^2$, $-\zeta_3$, $-\zeta_3^2$ & 32 & 3\\
    \bottomrule
    \end{tabular}
\end{table}

\newpage

\begin{table}[H]
\footnotesize
    \centering
    \caption{$W(E_8)$}
    \label{Table WE8}
    \begin{tabular}{c c c c c c}
    \toprule 
    Type &Carter Symbol& Orbit Type & Eigenvalues & Bertini & $a(X)$ \\ 
    \midrule
    1  & $\emptyset$ & $1^{240}$ & 1, 1, 1, 1, 1, 1, 1, 1 & 83 &9\\
    2  & $A_1$ & $1^{126}2^{57}$ & 1, 1, 1, 1, 1, 1, 1, $-1$ & 52& 7\\
    3  & $A_1^2$ & $1^{60}2^{90}$ & 1, 1, 1, 1, 1, 1, $-1$, $-1$ & 28 &5\\
    4  & $A_2$& $1^{72}3^{56}$ & 1, 1, 1, 1, 1, 1, $\zeta_3$, $\zeta_3^2$ & 90 & 6\\
    5  & $A_1^3$& $1^{26}2^{107}$ & 1, 1, 1, 1, 1, $-1$, $-1$, $-1$ & 16&3\\
    6  & $A_2\times A_1$ & $1^{30}2^{21}3^{32}6^{12}$ & 1, 1, 1, 1, 1, $-1$, $\zeta_3$, $\zeta_3^2$ & 64& 4 \\
    7  & $A_3$& $1^{40}2^{10}4^{45}$ & 1, 1, 1, 1, 1, $-1$, $i$, $-i$ & 54 &5\\
    8  & $(A_1^4)'$ & $1^{24}2^{108}$ & 1, 1, 1, 1, $-1$, $-1$, $-1$, $-1$ & 8 &1\\
    9  & $(A_1^4)''$& $1^82^{116}$ & 1, 1, 1, 1, $-1$, $-1$, $-1$, $-1$ & 9 & 1\\
    10 & $A_2\times A_1^2$ & $1^{12}2^{30}3^{16}6^{20}$ & 1, 1, 1, 1, $-1$, $-1$, $\zeta_3$, $\zeta_3^2$ & 41&2\\
    11 & $A_2^2$ & $1^{12}3^{76}$ & 1, 1, 1, 1, $\zeta_3$, $\zeta_3$, $\zeta_3^2$, $\zeta_3^2$ & 91&3\\
    12 & $A_3\times A_1$ & $1^{14}2^{23}4^{45}$ & 1, 1, 1, 1, $-1$, $-1$, $i$, $-i$ & 32&3\\
    13 & $A_4$ & $1^{20}5^{44}$ & 1, 1, 1, 1, $\zeta_5$, $\zeta_5^2$, $\zeta_5^3$, $\zeta_5^4$ & 94&4 \\
    14 & $D_4$ & $1^{24}2^{24}6^{28}$ & 1, 1, 1, 1, $-1$, $-1$, $-\zeta_3$, $-\zeta_3^2$ & 29&4\\
    15 & $D_4(a_1)$ & $1^{24}4^{54}$ & 1, 1, 1, 1, $i$, $i$, $-i$, $-i$ & 85&5\\
    16 & $A_1^5$ & $1^62^{117}$ & 1, 1, 1, $-1$, $-1$, $-1$, $-1$, $-1$ & 5& $-1$\\
    17 & $A_2\times A_1^3$ & $1^22^{35}3^26^{24}$ & 1, 1, 1, $-1$, $-1$, $-1$, $\zeta_3$, $\zeta_3^2$ & 24 & 0\\
    18 & $A_2^2\times A_1$ & $1^62^33^{40}6^{18}$ & 1, 1, 1, $-1$, $\zeta_3$, $\zeta_3$, $\zeta_3^2$, $\zeta_3^2$ & 71 & 1\\
    19 & $(A_3\times A_1^2)'$ & $1^{12}2^{24}4^{45}$ & 1, 1, 1, $-1$, $-1$, $-1$, $i$, $-i$ & 19 &1\\
    20 & $(A_3\times A_1^2)''$ & $1^42^{28}4^{45}$ & 1, 1, 1, $-1$, $-1$, $-1$, $i$, $-i$ & 20 & 1\\
    21 & $A_3\times A_2$ & $1^42^43^{12}4^{15}6^212^{10}$ & 1, 1, 1, $-1$, $\zeta_3$, $\zeta_3^2$, $i$, $-i$ & 65 & 2\\
    22 & $A_4\times A_1$ & $1^62^75^{24}10^{10}$ & 1, 1, 1, $-1$, $\zeta_5$, $\zeta_5^2$, $\zeta_5^3$, $\zeta_5^4$ & 70 & 2\\
    23 & $A_5$ & $1^82^23^66^{35}$ & 1, 1, 1, $-1$, $\zeta_3$, $\zeta_3^2$, $-\zeta_3$, $-\zeta_3^2$ & 59 & 3\\
    24 & $D_4\times A_1$ & $1^62^{33}6^{28}$ & 1, 1, 1, $-1$, $-1$, $-1$, $-\zeta_3$, $-\zeta_3^2$ & 17 & 2\\
    25 & $D_4(a_1)\times A_1$ & $1^62^94^{54}$ & 1, 1, 1, $-1$, $i$, $i$, $-i$, $-i$ & 56 & 3\\
    26 & $D_5$ & $1^{12}2^68^{27}$ & 1, 1, 1, $-1$, $\zeta_8$, $\zeta_8^3$, $\zeta_8^5$, $\zeta_8^7$ & 67 & 3\\
    27 & $D_5(a_1)$ & $1^{12}4^{15}6^812^{10}$ & 1, 1, 1, $-1$, $-\zeta_3$, $-\zeta_3^2$, $i$, $-i$ & 55 & 4\\
    28 & $A_1^6$ & $1^42^{118}$ & 1, 1, $-1$, $-1$, $-1$, $-1$, $-1$, $-1$ & 3 & $-3$\\
    29 & $A_2\times A_1^4$  &  $2^{36}3^86^{24}$  &  1, 1, $-1$, $-1$, $-1$, $-1$, $\zeta_3$, $\zeta_3^2$  & 14 & $-2$   \\
    30 & $A_2^2\times A_1^2$ & $2^63^{20}6^{28}$ & 1, 1, $-1$, $-1$, $\zeta_3$, $\zeta_3$, $\zeta_3^2$, $\zeta_3^2$ & 48 & $-1$ \\
    31 & $A_2^3$ & $1^63^{78}$ & 1, 1, $\zeta_3$, $\zeta_3$, $\zeta_3$, $\zeta_3^2$, $\zeta_3^2$, $\zeta_3^2$ & 103 & 0 \\ 
    32 & $A_3 \times A_1^3$ & $1^22^{29}4^{45}$ & 1, 1, $-1$, $-1$, $-1$, $-1$, $i$, $-i$ & 12 & $-1$\\
    33 & $A_3\times A_2 \times A_1$ & $1^22^53^44^{15}6^612^6$ & 1, 1, $-1$, $-1$, $\zeta_3$, $\zeta_3^2$, $i$, $-i$ & 45 & 0\\
    34 & $(A_3^2)'$ & $1^42^{10}4^{54}$ & 1, 1, $-1$, $-1$, $i$, $i$, $-i$, $-i$ & 34 & 1\\
    35 & $(A_3^2)''$ & $2^44^{58}$  & 1, 1, $-1$, $-1$, $i$, $i$, $-i$, $-i$ & 35 & 1 \\
    36 & $A_4 \times A_1^2$ & $2^{10}5^{12}10^{16}$ &  1, 1, $-1$, $-1$, $\zeta_5$, $\zeta_5^2$, $\zeta_5^3$, $\zeta_5^4$ & 46 & 0 \\
    37 & $A_4\times A_2$ & $1^23^65^{14}15^{10}$ & 1, 1, $\zeta_3$, $\zeta_3^2$, $\zeta_5$, $\zeta_5^2$, $\zeta_5^3$, $\zeta_5^4$ & 97 & 1\\
    38 & $(A_5\times A_1)'$ & $1^62^33^66^{35}$ & 1, 1, $-1$, $-1$, $\zeta_3$, $\zeta_3^2$, $-\zeta_3$, $-\zeta_3^2$ & 38 & 1\\
    39 & $(A_5\times A_1)''$& $1^22^53^26^{37}$ & 1, 1, $-1$, $-1$, $\zeta_3$, $\zeta_3^2$, $-\zeta_3$, $-\zeta_3^2$ & 39 & 1\\
    40 & $A_6$ & $1^27^{34}$ & 1,1, $\zeta_7$, $\zeta_7^2$, $\zeta_7^3$, $\zeta_7^4$, $\zeta_7^5$, $\zeta_7^6$ & 95 & 2\\
    \bottomrule
    \end{tabular}
\end{table}

\begin{table}[H]
\footnotesize
    \centering
    \ContinuedFloat
    \caption[]{$W(E_8)$ (continued)}
    \begin{tabular}{c c c c c c}
    \toprule
    Type & Carter Symbol & Orbit Type & Eigenvalues & Bertini & $a(X)$ \\ 
    \midrule
    41 & $D_4\times A_1^2$ & $1^42^{34}6^{28}$ & 1, 1, $-1$, $-1$, $-1$, $-1$, $-\zeta_3$, $-\zeta_3^2$ & 10 & 0\\
    42 & $D_4 \times A_2$ & $2^63^86^{34}$ & 1, 1, $-1$, $-1$, $\zeta_3$, $\zeta_3^2$, $-\zeta_3$, $-\zeta_3^2$ &  42 & 0 \\
    43 & $D_4(a_1) \times A_2$  & $3^84^{18}12^{12}$ & 1, 1, $\zeta_3$, $\zeta_3^2$, $i$, $i$, $-i$, $-i$ & 93 & 2 \\
    44 & $D_5\times A_1$ & $1^22^{11}8^{27}$ & 1, 1, $-1$, $-1$, $\zeta_8$, $\zeta_8^3$, $-\zeta_8$, $-\zeta_8^3$ &  44 & 1\\
    45 & $D_5(a_1)\times A_1$ & $1^22^54^{15}6^812^6$ & 1, 1, $-1$, $-1$, $-\zeta_3$, $-\zeta_3^2$, $i$, $-i$ & 33 & 2\\
    46 & $D_6$ & $1^42^810^{22}$ & 1, 1, $-1$, $-1$, $-\zeta_5$, $-\zeta_5^2$, $-\zeta_5^3$, $-\zeta_5^4$ & 36 & 2\\
    47 & $D_6(a_1)$ & $1^44^58^{27}$ & 1, 1, $i$, $-i$, $\zeta_8$, $\zeta_8^3$, $-\zeta_8$, $-\zeta_8^3$ & 88 & 3\\
    48 & $D_6(a_2)$ & $1^42^46^{38}$ & 1, 1, $-1$, $-1$, $-\zeta_3$, $-\zeta_3$, $-\zeta_3^2$, $-\zeta_3^2$ & 30 & 3\\
    49 & $E_6$ & $1^63^612^{18}$ & 1, 1, $\zeta_3$, $\zeta_3^2$, $\zeta_{12}$, $\zeta_{12}^5$, $-\zeta_{12}$, $-\zeta_{12}^5$ & 102 & 2\\
    50 & $E_6(a_1)$ & $1^69^{26}$ & 1, 1, $\zeta_9$, $\zeta_9^2$, $\zeta_9^4$, $\zeta_9^5$, $\zeta_9^7$, $\zeta_9^8$ & 101 & 3\\
    51 & $E_6(a_2)$ & $1^63^66^{36}$ & 1, 1, $\zeta_3$, $\zeta_3^2$, $-\zeta_3$, $-\zeta_3$, $-\zeta_3^2$, $-\zeta_3^2$ & 87 & 4\\
    52 & $A_1^7$ & $1^22^{119}$ & 1, $-1$, $-1$, $-1$, $-1$, $-1$, $-1$, $-1$ & 2& $-5$\\
    53 & $A_2^3\times A_1$  & $2^33^{42}6^{18}$& 1, $-1$, $\zeta_3$, $\zeta_3$, $\zeta_3$, $\zeta_3^2$, $\zeta_3^2$, $\zeta_3^2$    & 82 & $-2$  \\
    54 & $A_3 \times A_1^4$  & $2^{30}4^{45}$ & 1, $-1$, $-1$, $-1$, $-1$, $-1$, $i$, $-i$  & 7 & $-3$\\
    55 & $A_3 \times A_2 \times A_1^2$    & $2^63^44^{15}6^612^{10}$ & 1, $-1$, $-1$, $-1$, $\zeta_3$, $\zeta_3^2$, $i$, $-i$ & 27  & $-2$\\
    56 & $A_3^2 \times A_1$ & $1^22^{11}4^{54}$ & 1, $-1$, $-1$, $-1$, $i$, $i$, $-i$, $-i$ & 25 & $-1$\\
    57 & $A_4 \times A_2 \times A_1$ & $2^13^25^66^210^415^630^2$ & 1, $-1$, $\zeta_3$, $\zeta_3^2$, $\zeta_5$, $\zeta_5^2$, $\zeta_5^3$, $\zeta_5^4$ & 81 & $-1$ \\
    58 & $A_4 \times A_3$ & $4^55^810^220^8$ & 1, $-1$, $i$, $-i$, $\zeta_5$, $\zeta_5^2$, $\zeta_5^3$, $\zeta_5^4$ & 76 & 0\\
    59 & $A_5 \times A_1^2$   &$2^63^26^{37}$ & 1, $-1$, $-1$, $-1$, $\zeta_3$, $\zeta_3^2$, $-\zeta_3$, $-\zeta_3^2$ & 23  & $-1$ \\
    60 & $A_5\times A_2$ & $1^22^23^86^{35}$ & 1, $-1$, $\zeta_3$, $\zeta_3$, $\zeta_3^2$, $\zeta_3^2$, $-\zeta_3$,  $-\zeta_3^2$ & 74 & 0\\
    61 & $A_6 \times A_1$ & $2^17^{18}14^8$ & 1, $-1$, $\zeta_7$, $\zeta_7^2$, $\zeta_7^3$, $\zeta_7^4$, $\zeta_7^5$, $\zeta_7^6$ & 79 & 0  \\
    62 & $(A_7)'$ & $1^22^14^58^{27}$ & 1, $-1$, $i$, $-i$, $\zeta_8$, $\zeta_8^3$, $-\zeta_8$, $-\zeta_8^3$ & 62 & 1\\
    63 & $(A_7)''$ &$4^28^{29}$ &  1, $-1$, $i$, $-i$, $\zeta_8$, $\zeta_8^3$, $-\zeta_8$, $-\zeta_8^3$ & 63 & 1 \\
    64 & $D_4\times A_1^3$ & $1^22^{35}6^{28}$ & 1, $-1$, $-1$, $-1$, $-1$, $-1$, $-\zeta_3$, $-\zeta_3^2$ & 6 & $-2$\\
    65 & $D_4 \times A_3$    & $2^64^{15}6^812^{10}$ & 1, $-1$, $-1$, $-1$, $-\zeta_3$, $-\zeta_3^2$, $i$, $-i$ & 21 & 0\\
    66 & $D_4(a_1)\times A_3$  & $2^24^{59}$ & 1, $-1$, $i$, $i$, $i$, $-i$, $-i$, $-i$ &  66 & 1  \\
    67 & $D_5 \times A_1^2$ &$2^{12}8^{27}$ & 1, $-1$, $-1$, $-1$, $\zeta_8$, $\zeta_8^3$, $-\zeta_8$, $-\zeta_8^3$ & 26 & $-1$ \\
    68 & $D_5 \times A_2$ &$3^46^28^924^6$ & 1, $-1$, $\zeta_3$, $\zeta_3^2$, $\zeta_8$, $\zeta_8^3$, $-\zeta_8$, $-\zeta_8^3$ & 77 & 0 \\
    69 & $D_5(a_1) \times A_2$  & $3^44^36^812^{14}$ & 1, $-1$, $\zeta_3$, $\zeta_3^2$, $-\zeta_3$, $-\zeta_3^2$, $i$, $-i$ &  69 & 1 \\
    70 & $D_6\times A_1$ & $1^22^910^{22}$ & 1, $-1$, $-1$, $-1$, $-\zeta_5$, $-\zeta_5^2$, $-\zeta_5^3$, $-\zeta_5^4$ & 22 & 0\\
    71 & $D_6(a_2)\times A_1$ & $1^22^56^{38}$ & 1, $-1$, $-1$, $-1$, $-\zeta_3$, $-\zeta_3$, $-\zeta_3^2$, $-\zeta_3^2$ & 18 & 1\\
    72 & $E_6\times A_1$ & $2^33^26^212^{18}$ & 1, $-1$, $\zeta_3$, $\zeta_3^2$, $\zeta_{12}$, $\zeta_{12}^5$, $-\zeta_{12}$, $-\zeta_{12}^5$  & 80  & 0\\
    73 & $E_6(a_1)\times A_1$  &$2^39^{14}18^6$ & 1, $-1$, $\zeta_9$, $\zeta_9^2$, $\zeta_9^4$, $\zeta_9^5$, $\zeta_9^7$, $\zeta_9^8$ & 78 & 1\\
    74 & $E_6(a_2) \times A_1$ & $2^33^26^{38}$& 1, $-1$, $\zeta_3$, $\zeta_3^2$, $-\zeta_3$, $-\zeta_3$, $-\zeta_3^2$, $-\zeta_3^2$ & 60 & 2 \\
    75 & $D_7$  & $2^24^212^{19}$& 1, $-1$, $i$, $-i$, $\zeta_{12}$, $\zeta_{12}^5$, $-\zeta_{12}$, $-\zeta_{12}^5$ &  75  & 1\\
    76 & $D_7(a_1)$ &$4^510^620^8$ & 1, $-1$, $i$, $-i$, $-\zeta_5$, $-\zeta_5^2$, $-\zeta_5^3$, $-\zeta_5^4$ & 58 & 2\\
    77 & $D_7(a_2)$ &$6^48^924^6$ & 1, $-1$, $-\zeta_3$, $-\zeta_3^2$, $\zeta_8$, $\zeta_8^3$, $-\zeta_8$, $-\zeta_8^3$ & 68 & 2 \\
    78 & $E_7$ & $1^22^218^{13}$ & 1, $-1$, $-\zeta_9$, $-\zeta_9^2$, $-\zeta_9^4$, $-\zeta_9^5$, $-\zeta_9^7$, $-\zeta_9^8$ & 73 & 1\\
    79 & $E_7(a_1)$ & $1^214^{17}$ & 1, $-1$, $-\zeta_7$, $-\zeta_7^2$, $-\zeta_7^3$, $-\zeta_7^4$, $-\zeta_7^5$, $-\zeta_7^6$ & 61 & 2\\
    80 & $E_7(a_2)$ & $1^22^26^312^{18}$ & 1, $-1$, $-\zeta_3$, $-\zeta_3^2$, $\zeta_{12}$, $\zeta_{12}^5$, $-\zeta_{12}$, $-\zeta_{12}^5$ & 72 & 2\\
    \bottomrule
    \end{tabular}
\end{table}

\newpage

\begin{table}[H]
\footnotesize
    \centering
    \ContinuedFloat
    \caption[]{$W(E_8)$ (continued)}
    \begin{tabular}{c c c c c c}
    \toprule
    Type & Carter Symbol & Orbit Type & Eigenvalues & Bertini & $a(X)$ \\ 
    \midrule
    81 & $E_7(a_3)$ & $1^26^310^730^5$ & 1, $-1$, $-\zeta_3$, $-\zeta_3^2$, $-\zeta_5$, $-\zeta_5^2$, $-\zeta_5^3$, $-\zeta_5^4$ & 57 & 3\\
    82 & $E_7(a_4)$ & $1^22^26^{39}$ & 1, $-1$, $-\zeta_3$, $-\zeta_3$, $-\zeta_3$, $-\zeta_3^2$, $-\zeta_3^2$, $-\zeta_3^2$ & 53 & 4\\
    83 & $A_1^8$ & $2^{120}$ & $-1$, $-1$, $-1$, $-1$, $-1$, $-1$, $-1$, $-1$ & 1 & $-7$\\
    84 & $A_2^4$ & $3^{80}$ & $\zeta_3$, $\zeta_3$, $\zeta_3$, $\zeta_3$, $\zeta_3^2$, $\zeta_3^2$, $\zeta_3^2$, $\zeta_3^2$ & 112 & $-3$    \\
    85 & $A_3^2\times A_1^2$ &$2^{12}4^{54}$ & $-1$, $-1$, $-1$, $-1$, $i$, $i$, $-i$, $-i$ & 15 & $-3$ \\
    86 & $A_4^2$ & $5^{48}$ & $\zeta_5$, $\zeta_5$, $\zeta_5^2$, $\zeta_5^2$, $\zeta_5^3$, $\zeta_5^3$, $\zeta_5^4$, $\zeta_5^4$ & 110 & $-1$ \\
    87 & $A_5 \times A_2 \times A_1$ &$2^33^86^{35}$  & $-1$, $-1$, $\zeta_3$, $\zeta_3$, $\zeta_3^2$, $\zeta_3^2$, $-\zeta_3$, $-\zeta_3^2$ & 51 & $-2$\\
    88 & $A_7 \times A_1$ &$2^24^58^{27}$  & $-1$, $-1$, $i$, $-i$, $\zeta_8$, $\zeta_8^3$, $-\zeta_8$, $-\zeta_8^3$ & 47 & $-1$ \\
    89 & $A_8$ & $3^29^{26}$ & $\zeta_3$, $\zeta_3^2$, $\zeta_9$, $\zeta_9^2$, $\zeta_9^4$, $\zeta_9^5$, $\zeta_9^7$, $\zeta_9^8$ & 108 & 0\\
    90 & $D_4 \times A_1^4$ & $2^{36}6^{28}$ & $-1$, $-1$, $-1$, $-1$, $-1$, $-1$, $-\zeta_3$, $-\zeta_3^2$ & 4 & $-4$\\
    91 & $D_4^2$ & $2^66^{38}$ & $-1$, $-1$, $-1$, $-1$, $-\zeta_3$, $-\zeta_3$, $-\zeta_3^2$, $-\zeta_3^2$ & 11 & $-1$ \\
    92 & $D_4(a_1)^2$ & $4^{60}$ & $i$, $i$, $i$, $i$, $-i$, $-i$, $-i$, $-i$  &  92 & 1 \\
    93 & $D_5(a_1)\times A_3$ & $4^{18}6^412^{12}$ & $-1$, $-1$, $-\zeta_3$, $-\zeta_3^2$, $i$, $i$, $-i$, $-i$ & 43 & 0 \\
    94 & $D_6 \times A_1^2$ & $2^{10}10^{22}$ & $-1$, $-1$, $-1$, $-1$, $-\zeta_5$, $-\zeta_5^2$, $-\zeta_5^3$, $-\zeta_5^4$ & 13 & $-2$ \\
    95 & $D_8$ & $2^114^{17}$ & $-1$, $-1$, $-\zeta_7$, $-\zeta_7^2$, $-\zeta_7^3$, $-\zeta_7^4$, $-\zeta_7^5$, $-\zeta_7^6$ & 40 & 0 \\
    96 & $D_8(a_1)$ & $4^312^{19}$ & $i$, $i$, $-i$, $-i$, $\zeta_{12}$, $\zeta_{12}^5$, $-\zeta_{12}$, $-\zeta_{12}^5$ &  96 & 1\\
    97 & $D_8(a_2)$ & $3^16^310^730^5$ & $-1$, $-1$, $-\zeta_3$, $-\zeta_3^2$, $-\zeta_5$, $-\zeta_5^2$, $-\zeta_5^3$, $-\zeta_5^4$ & 37 & 1  \\
    98 & $D_8(a_3)$  & $8^{30}$ & $\zeta_8$, $\zeta_8$, $\zeta_8^3$, $\zeta_8^3$, $-\zeta_8$, $-\zeta_8$, $-\zeta_8^3$, $-\zeta_8^3$ &  98 & 1 \\
    99 & $E_6\times A_2$ & $3^812^{18}$ & $\zeta_3$, $\zeta_3$, $\zeta_3^2$, $\zeta_3^2$, $\zeta_{12}$, $\zeta_{12}^5$, $-\zeta_{12}$, $-\zeta_{12}^5$ & 111 & $-1$ \\
    100 & $E_6(a_2)\times A_2$ & $3^86^{36}$ &  $\zeta_3$, $\zeta_3$, $\zeta_3^2$, $\zeta_3^2$, $-\zeta_3$, $-\zeta_3$, $-\zeta_3^2$, $-\zeta_3^2$ &  100 & 1 \\
    101 & $E_7 \times A_1$ & $2^318^{13}$ & $-1$, $-1$, $-\zeta_9$, $-\zeta_9^2$, $-\zeta_9^4$, $-\zeta_9^5$, $-\zeta_9^7$, $-\zeta_9^8$ & 50 & $-1$\\
    102 & $E_7(a_2)\times A_1$ & $2^36^312^{18}$ & $-1$, $-1$, $-\zeta_3$, $-\zeta_3^2$, $\zeta_{12}$, $\zeta_{12}^5$, $-\zeta_{12}$, $-\zeta_{12}^5$ & 49 & 0 \\ 
    103 & $E_7(a_4)\times A_1$ & $2^36^{39}$ & $-1$, $-1$, $-\zeta_3$, $-\zeta_3$, $-\zeta_3$, $-\zeta_3^2$, $-\zeta_3^2$, $-\zeta_3^2$ & 31 & 2  \\
    104 & $E_8$  & $30^8$ & $-\zeta_{15}$, $-\zeta_{15}^2$, $-\zeta_{15}^4$, $-\zeta_{15}^7$, $-\zeta_{15}^8$, $-\zeta_{15}^{11}$, $-\zeta_{15}^{13}$, $-\zeta_{15}^{14}$  & 109 & 0 \\
    105 & $E_8(a_1)$  & $24^{10}$ & $\zeta_{24}$, $\zeta_{24}^5$, $\zeta_{24}^7$, $\zeta_{24}^{11}$, $-\zeta_{24}$, $-\zeta_{24}^5$, $-\zeta_{24}^7$, $-\zeta_{24}^{11}$ &  105 &  1\\
    106 & $E_8(a_2)$  & $20^{12}$ & $\zeta_{20}$, $\zeta_{20}^3$, $\zeta_{20}^7$, $\zeta_{20}^9$, $-\zeta_{20}$, $-\zeta_{20}^3$, $-\zeta_{20}^7$, $-\zeta_{20}^9$ & 106 & 1 \\
    107 & $E_8(a_3)$  &  $12^{20}$ & $\zeta_{12}$, $\zeta_{12}$, $\zeta_{12}^5$, $\zeta_{12}^5$, $-\zeta_{12}$, $-\zeta_{12}$, $-\zeta_{12}^5$, $-\zeta_{12}^5$ &  107 & 1 \\
    108 & $E_8(a_4)$  & $6^118^{13}$ & $-\zeta_3$, $-\zeta_3^2$, $-\zeta_9$, $-\zeta_9^2$, $-\zeta_9^4$, $-\zeta_9^5$, $-\zeta_9^7$, $-\zeta_9^8$ & 89 & 0\\
    109 & $E_8(a_5)$ &  $15^{16}$ & $\zeta_{15}$, $\zeta_{15}^2$, $\zeta_{15}^4$, $\zeta_{15}^7$, $\zeta_{15}^8$, $\zeta_{15}^{11}$, $\zeta_{15}^{13}$, $\zeta_{15}^{14}$ & 104 & 2 \\
    110 & $E_8(a_6)$ & $10^{24}$  & $-\zeta_5$, $-\zeta_5$, $-\zeta_5^2$, $-\zeta_5^2$, $-\zeta_5^3$, $-\zeta_5^3$, $-\zeta_5^4$, $-\zeta_5^4$ & 86 & 3\\
    111 & $E_8(a_7)$ & $6^412^{18}$  &  $-\zeta_3$, $-\zeta_3$, $-\zeta_3^2$, $-\zeta_3^2$, $\zeta_{12}$, $\zeta_{12}^5$, $-\zeta_{12}$, $-\zeta_{12}^5$  & 99 & 3 \\
    112 & $E_8(a_8)$ &  $6^{40}$ & $-\zeta_3$, $-\zeta_3$, $-\zeta_3$, $-\zeta_3$, $-\zeta_3^2$, $-\zeta_3^2$, $-\zeta_3^2$, $-\zeta_3^2$ & 84  & 5\\
    \bottomrule
    \end{tabular}
\end{table}

\begin{table}[H]
    \centering
    \caption{$H^1(G,\mathrm{Pic}(\overline{X}))$}
    \label{Table H1}
    \begin{tabular}{c c c }
    \toprule
    degree 1 & degree 2 &  $H^1(G,\mathrm{Pic}(\overline{X}))$ \\ 
    \midrule
    $(A_1^4)'$ & $(A_1^4)''$ & $\bZ/2\bZ \times \bZ/2\bZ$\\
    $(A_1^4)''$ & $(A_1^4)'$ & 0 \\
    $(A_3\times A_1^2)'$ & $(A_3 \times A_1^2)''$ & $\bZ/2\bZ \times \bZ/2\bZ$\\
    $(A_3\times A_1)''$ & $(A_3 \times A_1^2)'$ & 0\\
    $(A_3^2)'$ & $A_3^2$ & $\bZ/2\bZ \times \bZ/2\bZ$\\
    $(A_3^2)''$ & / & 0\\
    $(A_5\times A_1)'$ & $(A_5 \times A_1)''$ & $\bZ/2\bZ \times \bZ/2\bZ$\\
    $(A_5 \times A_1)''$ & $(A_5 \times A_1)'$ & 0\\
    $(A_7)'$ & $A_7$ & $\bZ/2\bZ \times \bZ/2\bZ$\\
    $(A_7)''$ & / & 0\\
    \bottomrule
    \end{tabular}
\end{table}

\begin{table}[H]
    \centering
    \captionsetup{labelsep=newline, justification=centering} 
    
    \begin{minipage}[t]{0.45\textwidth}
        \centering
        \caption{Non-minimal open cases}
        \label{Table nonmin}
        \vspace{1mm} 
        \begin{tabular}[t]{c  c}
        \toprule
        Type & Bertini Twist\\
        \midrule
        79 & 61 \\
        42 & 42 \\
        43 & 93 \\
        58 & 76* \\
        68 & 77* \\
        69 & 69 \\
        \bottomrule
        \end{tabular}
    \end{minipage}
    \hfill 
    \begin{minipage}[t]{0.45\textwidth}
        \centering
        \caption{Minimal open cases}
        \label{Table min}
        \vspace{1mm}
        \begin{tabular}[t]{c  c}
        \toprule
        Type & Bertini Twist\\
        \midrule
        66* & 66* \\
        75* & 75* \\
        84 & 112 \\
        86 & 110 \\
        89 & 108 \\
        92 & 92 \\
        96 & 96 \\
        98 & 98 \\
        99 & 111 \\
        100 & 100 \\
        104 & 109 \\
        105 & 105 \\
        106 & 106 \\
        107 & 107 \\
        \bottomrule
        \end{tabular}
    \end{minipage}
    
\end{table}

We now list the normal forms that are a consequence of the results in \S\! \ref{sec: sextics}.

\begin{lem}
    Let $X$ be a del Pezzo surface of degree $1$ over $\bF_3$. Denote by $f_i\in \bF_3[x,y]$ a homogeneous polynomial of degree $i$. Then $X$ is isomorphic to a surface in one of the following five families:
    \begin{enumerate}
        \item $V(w^2+z^3+z^2(x^2+y^2)+z(ax^3y+bx^2y^2)+f_6(x,y))$ for $a,b \in \bF_3,$
        \item $V(w^2+z^3+z^2(x^2+2y^2)+z(ax^3y+bx^2y^2)+f_6(x,y))$ for $a,b \in \bF_3,$
        \item $V(w^2+z^3+z^2x^2+z(axy^3+by^4)+f_6(x,y))$ for $a,b \in \bF_3,$
        \item $V(w^2+z^3+z^22x^2+z(axy^3+by^4)+f_6(x,y))$ for $a,b \in \bF_3,$
        \item $V(w^2+z^3+zf_4(x,y)+ax^5y+bx^4y^2+cx^2y^4+dxy^5)$ for $a,b,c,d \in \bF_3$.
    \end{enumerate}
\end{lem}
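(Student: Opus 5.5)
We start from the normal form of Proposition \ref{Proposition quadratic forms finite fields} valid in characteristic $\ne 2$, so $X\cong V(w^2+z^3+z^2f_2+zf_4+f_6)$ over $\bF_3$. The remaining freedom we use is a linear change of $(x,y)$ by $\mathrm{GL}(2,\bF_3)$ together with a shift $z\mapsto z+g_2(x,y)$ with $g_2$ a binary quadratic form. Because $\mathrm{char}=3$, this shift transforms
\[z^3\mapsto z^3+g_2^3,\quad z^2f_2\mapsto z^2f_2+2zg_2f_2+g_2^2f_2,\quad zf_4\mapsto zf_4+g_2f_4 .\]
So $f_2$ is unchanged, $f_4$ becomes $f_4+2g_2f_2$, and $f_6$ becomes $f_6+g_2^3+g_2^2f_2+g_2f_4$. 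The proof splits according to whether $f_2\neq 0$ or $f_2=0$.

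\emph{Case $f_2\neq 0$.} By Proposition \ref{Proposition class quadr forms odd q} with the non-square $a=2$, after a $\mathrm{GL}(2,\bF_3)$-change of $x,y$ we may take $f_2$ to be one of $x^2+y^2$, $x^2+2y^2$, $x^2$ or $2x^2$. We then use the shift to replace $f_4$ by a representative of its class in $\bF_3[x,y]_4/f_2\cdot\bF_3[x,y]_2$. Multiplication by $f_2$ is injective, so this quotient has dimension $5-3=2$, and $f_6$ is left arbitrary.
\begin{itemize}
\item For $f_2=x^2+\epsilon y^2$ with $\epsilon\in\{1,2\}$, the products $x^2f_2$, $y^2f_2$ and $xyf_2$ express $x^4$, $y^4$ and $xy^3$ in terms of $x^2y^2$ and $x^3y$. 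Hence $\{x^3y,x^2y^2\}$ spans a complement, which gives families (1) and (2).
\item For $f_2=cx^2$, the multiples are spanned by $x^4,x^3y,x^2y^2$. Hence $\{xy^3,y^4\}$ is a complement, which gives families (3) and (4).
\end{itemize}

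\emph{Case $f_2=0$.} Here $f_4$ is fixed by the shift, and $f_6$ changes by $\Phi(g_2)=g_2^3+g_2f_4$. Over $\bF_3$ the Frobenius is additive and fixes scalars, so $g_2^3=a x^6+b x^3y^3+c y^6$ for $g_2=ax^2+bxy+cy^2$. Therefore $\Phi$ is an $\bF_3$-linear map $\bF_3^3\to\bF_3[x,y]_6$.

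Family (5) asks exactly that we can kill the coefficients of $x^6$, $x^3y^3$ and $y^6$ in $f_6$. This holds whenever the $3\times 3$ matrix $M(f_4)$, obtained by composing $\Phi$ with projection onto those three coefficients, is invertible. Its entries are $1$ on the diagonal coming from $g_2^3$, plus entries read off from the coefficients of $f_4$.

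The main obstacle is the set of quartics $f_4$ for which $\det M(f_4)=0$. I would handle these as follows.
\begin{enumerate}
\item Compute $\det M(f_4)$ as an explicit polynomial in the five coefficients of $f_4$. This is a finite check over $3^5=243$ quartics, and can also be done by hand.
\item For each bad $f_4$, try a preliminary $\mathrm{GL}(2,\bF_3)$-change of $(x,y)$ that moves it to a quartic with $\det M\neq 0$. This is legitimate because the condition of family (5) is not $\mathrm{GL}_2$-invariant.
\item If no such change exists, show that the image of $\Phi$ still projects onto the relevant coordinates for the specific $f_6$ forced by smoothness, or that the surface must be singular. For the latter, a vector $(a,b,c)$ in the cokernel should pair with the Jacobian criterion to produce a singular point over $\bF_3$ or $\overline{\bF_3}$.
\end{enumerate}
I would try step 2 first, since $\mathrm{GL}(2,\bF_3)$ has only $48$ elements and the bad locus is likely small. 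Failing that, a direct computer verification over the $3^5$ quartics, in the spirit of \S\ref{sec: sextics}, settles the case.
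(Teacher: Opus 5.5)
\textbf{There is a genuine gap, and it cannot be closed, because the statement fails when $f_2=0$.}

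Your case $f_2\neq 0$ is correct, and it is exactly what the paper's one-line justification (``transformations in $x$, $y$ and a further shift in $z$'') amounts to. The gap is the case $f_2=0$, and steps 2 and 3 of your plan must fail there.

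\emph{The determinant.} Write $f_4=\sum_i e_ix^{4-i}y^i$ and $g=ax^2+bxy+cy^2$. The $(x^6,x^3y^3,y^6)$-part of $g^3+gf_4$ is $\bigl((1+e_0)a,\ e_3a+(1+e_2)b+e_1c,\ (1+e_4)c\bigr)$. Hence $\det M(f_4)=(1+e_0)(1+e_2)(1+e_4)$. The bad locus is therefore $19/27$ of all quartics, not a small set.

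\emph{Step 2 fails.} Take $f_4=2x^2y^2=-(xy)^2$. Every $\mathrm{GL}(2,\bF_3)$-translate has the form $-(px^2+rxy+sy^2)^2$, where $px^2+rxy+sy^2$ is a product of two independent linear forms. So $r^2-ps$ is a nonzero square, i.e.\ equal to $1$, and the $x^2y^2$-coefficient is $-(r^2+2ps)=-(r^2-ps)=2$. Thus $\det M$ vanishes for every translate.

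\emph{Step 3 fails.} Consider
\[X\colon\ F=w^2+z^3+2x^2y^2z+x^5y+x^3y^3+2xy^5=0 .\]
It misses the two singular points of $\bP(1,1,2,3)$, and it is smooth. Indeed, a singular point needs $\partial_wF=0$ and $\partial_zF=f_4=0$. The point $x=y=0$ is excluded, since it would force $w=0$ and then $z^3=0$. On $x=0$ we have $\partial_xF=2y^5\neq 0$, and on $y=0$ we have $\partial_yF=x^5\neq 0$. So $X$ is a del Pezzo surface of degree $1$, yet it lies in none of the five families.

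\emph{Why $X$ is in no family.} An isomorphism $X\simeq V(F')$ with $F'$ in one of the families comes from a graded automorphism $\phi$ with $\phi^*F'=tF$.
\begin{enumerate}
\item Neither side has $w$-linear terms, so $\phi^*w=\mu w$. Then $\phi^*z=\lambda z+g$ with $\mu^2=\lambda^3=t$, which forces $\lambda=1$ over $\bF_3$.
\item Since $(z+g)^3=z^3+g^3$, the $z^2$-coefficient of $\phi^*F'$ is $\lambda^2\,f_2'\circ A$. It must vanish, so $F'$ lies in family (5).
\item Consequently $f_6'=H\circ B$ for some $B\in\mathrm{GL}(2,\bF_3)$, where $H=f_6+gf_4+g^3$ and $g\in\bF_3[x,y]_2$.
\item For $P\in\bF_3^2$ with both coordinates nonzero, $f_4(P)=2$ and $g(P)^3=g(P)$. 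Hence $H(P)=f_6(P)$, which is nonzero because $f_6(1,1)=1$ and $f_6(1,2)=2$.
\item If $B$ has a column with both entries nonzero, then the $x^6$- or $y^6$-coefficient of $H\circ B$ is $H$ of that column, hence nonzero.
\item If $B$ is monomial, the $x^3y^3$-coefficient of $H\circ B$ is a nonzero multiple of $1+2b+b=1$.
\end{enumerate}

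So no preliminary coordinate change, no smoothness argument and no computer check can finish your step 3. What the check over the $3^5$ quartics would actually show is that family (5) has to be enlarged. For example, for each $f_4$ one can let $f_6$ run over representatives of $\bF_3[x,y]_6$ modulo $\{g^3+gf_4\}$, or simply over all of $\bF_3[x,y]_6$. The paper's sketch has the same hole: it tacitly treats the shift as acting only through $g^3$ and ignores the $gf_4$ term, which you correctly flagged.
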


\begin{lem}
    Let $X$ be a del Pezzo surface of degree $1$ over $\bF_4$. Denote by $f_i \in \bF_4[x,y]$ a homogeneous polynomial of degree $i$. Then $X$ is isomorphic to a surface of one of the following $15$ families:
    \begin{enumerate}
        \item $V(w^2+z^3+wzx+wf_3(x,y)+azy^4+f_6(x,y))$ for $a \in \bF_4$,
        \item $V(w^2+z^3+zf_4(x,y)+ax^5y+bx^3y^3+cxy^5)$ for $a,b,c \in \bF_4$,
        \item $V(w^2+z^3+wx^3+zf_4(x,y)+ax^6+bx^4y^2+cx^3y^3+dxy^5)$ for $a,b,c,d \in \bF_4$,
        \item $V(w^2+z^3+w \alpha x^3+zf_4(x,y)+ax^6+bx^4y^2+cx^3y^3+dxy^5)$ for $a,b,c,d \in \bF_4$,
        \item $V(w^2+z^3+w\alpha^2x^3+zf_4(x,y)+ax^6+bx^4y^2+cx^3y^3+dxy^5)$ for $a,b,c,d \in \bF_4$,
        \item $V(w^2+z^3+wxy^2+zf_4(x,y)+ax^5y+bx^4y^2+cxy^5+dy^6)$, for $a,b,c,d \in \bF_4$,
        \item $V(w^2+z^3+w(x^3+y^3)+zf_4(x,y)+ax^6+bx^2y^4+cxy^5+dy^6)$, for $a,b,c,d \in \bF_4$,
        \item $V(w^2+z^3+w\alpha(x^3+y^3)+zf_4(x,y)+ax^6+bx^2y^4+cxy^5+dy^6)$, for $a,b,c,d \in \bF_4$,
        \item $V(w^2+z^3+w\alpha^2(x^3+y^3)+zf_4(x,y)+ax^6+bx^2y^4+cxy^5+dy^6)$, for $a,b,c,d \in \bF_4$,
        \item $V(w^2+z^3+w(x^3+x^2y+\alpha xy^2)+zf_4(x,y)+ax^6+bx^2y^4+cxy^5+dy^6)$, for $a,b,c,d \in \bF_4$,
        \item $V(w^2+z^3+w\alpha(x^3+x^2y+\alpha xy^2)+zf_4(x,y)+ax^6+bx^2y^4+cxy^5+dy^6)$, for $a,b,c,d \in \bF_4$,
        \item $V(w^2+z^3+w\alpha^2(x^3+x^2y+\alpha xy^2)+zf_4(x,y)+ax^6+bx^2y^4+cxy^5+dy^6)$, for $a,b,c,d \in \bF_4$,
        \item $V(w^2+z^3+w(x^3+\alpha y^3)+zf_4(x,y)+ax^6+bx^2y^4+cxy^5+dy^6)$, for $a,b,c,d \in \bF_4$,
        \item $V(w^2+z^3+w\alpha(x^3+\alpha y^3)+zf_4(x,y)+ax^6+bx^2y^4+cxy^5+dy^6)$, for $a,b,c,d \in \bF_4$,
        \item $V(w^2+z^3+w\alpha^2(x^3+\alpha y^3)+zf_4(x,y)+ax^6+bx^2y^4+cxy^5+dy^6)$, for $a,b,c,d \in \bF_4$.
    \end{enumerate}
\end{lem}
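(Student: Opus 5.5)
Starting from a general sextic over $\bF_4$, I would use Proposition \ref{Proposition quadratic forms finite fields} in characteristic $2\neq 3$: a shift in $z$ removes $z^2f_2$, leaving
\[w^2+z^3+wzf_1(x,y)+wf_3(x,y)+zf_4(x,y)+f_6(x,y).\]
The remaining freedom is $\mathrm{GL}(2,\bF_4)$ on $(x,y)$, the substitutions $z\mapsto z+Q(x,y)$ and $w\mapsto w+C(x,y)+\ell(x,y)z$, and diagonal rescaling $(x,y,z,w)\mapsto(\lambda x,\lambda y,\lambda^2z,\lambda^3w)$. I would split into two cases according to whether $f_1$ vanishes.

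\emph{Case $f_1\neq 0$.} First use $\mathrm{GL}(2,\bF_4)$ to send $f_1$ to $x$. Next apply $z\mapsto z+Q$, which changes the $wz$-part by nothing new but adds $wxQ$ to $wf_3$ and $Q^3+zQ$-type terms elsewhere (in characteristic $2$, $(z+Q)^3=z^3+z^2Q+zQ^2+Q^3$, so the new $z^2Q$ must be cancelled again, which restricts the usable $Q$). I would rather use $w\mapsto w+C$: this adds $C^2$ to $f_6$, $zxC$ to $zf_4$, and nothing to $wf_3$ in characteristic $2$. Choosing $C$ appropriately, $zxC$ kills every monomial of $f_4$ divisible by $x$, so $f_4=ay^4$. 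I expect $f_3$ and $f_6$ to remain free, which matches family (1) with $4\cdot 4^4\cdot 4^7$ equations; that equals the count $4^{12}$ quoted in the text once the reduction is done carefully.

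\emph{Case $f_1=0$.} Apply the cubic form classification to $f_3$ (proposition on binary cubics over $\bF_4$), giving $f_3=0$ or one of the $13$ representatives. If $f_3=0$, the surface is singular (the partial derivative in $w$ vanishes identically, and the singular locus is nonempty), so this case is excluded. Then use $w\mapsto w+C$, which adds $Cf_3+C^2$ to $f_6$, and I would kill four coefficients of $f_6$ via the four coefficients of $C$. For $f_3=x^3$, the map $C\mapsto Cx^3+C^2$ hits the monomials $x^6,x^5y,x^4y^2,x^3y^3$ leading terms mixed with squares, leaving $x^6,x^4y^2,x^3y^3,xy^5$ after solving triangularly. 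The analogous triangular elimination should give the listed monomials for each representative, yielding families (3)--(15) plus the $xy^2$ family (6). Family (2) arises from the degenerate subcase where $f_3$ is absorbed. Here I would check: the listed count is $13\cdot4^9+4^8+4^{12}$, and $4^8$ corresponds to family (2) with $f_4$ free ($4^5$) and three coefficients.

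The main obstacle will be verifying, representative by representative, that the $\bF_2$-semilinear map $C\mapsto Cf_3+C^2$ on cubics is surjective onto a complement of the stated monomials. Rescaling can also shrink parameters, and the paper's redundancy tolerance means only surjectivity matters. I would check these linear-algebra surjectivities directly, possibly by the Magma computation.
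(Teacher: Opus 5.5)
\textbf{There is a genuine gap, in family (6).} Your case $f_1\neq 0$ is fine. So is the triangular elimination by $w\mapsto w+C$ for twelve of the thirteen representatives of $f_3$, taking $c_0=0$:
\begin{itemize}
\item for $sx^3$ one kills $x^5y$, $x^2y^4$, $y^6$ via $c_1,c_2,c_3$;
\item for $s(x^3+y^3)$, $s\,x(x^2+xy+\alpha y^2)$ and $s(x^3+\alpha y^3)$ one kills $x^5y$, $x^4y^2$, $x^3y^3$ successively via $c_1,c_2,c_3$.
\end{itemize}
The failure is the representative $f_3=xy^2$. Writing $C=c_0x^3+c_1x^2y+c_2xy^2+c_3y^3$, one gets
\[Cxy^2+C^2=c_0^2x^6+(c_0+c_1^2)x^4y^2+c_1x^3y^3+(c_2+c_2^2)x^2y^4+c_3xy^5+c_3^2y^6.\]
So the $x^2y^4$-coefficient can only be moved by $c_2+c_2^2\in\bF_2$. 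Nothing else you actually use changes this. A substitution in $\mathrm{GL}(2,\bF_4)$ preserving $xy^2$ (together with admissible rescalings of $z,w$) must be diagonal with $\beta\gamma^2=1$. It then multiplies the $x^2y^4$-coefficient by $(\beta\gamma^2)^2=1$. Hence the surjectivity you planned to verify is false here. Your moves cannot bring a surface with $f_3=xy^2$ into family (6) whenever its $x^2y^4$-coefficient lies outside $\bF_2$.

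\textbf{The fix uses a move you listed but then discarded.} Take the coupled substitution $z\mapsto z+\ell^2$, $w\mapsto w+\ell z+C$ with $\ell=l_0x+l_1y$. The $z^2\ell^2$ coming from $(z+\ell^2)^3$ is cancelled by the $\ell^2z^2$ coming from $(w+\ell z)^2$. This is presumably the ``further shift in $z$'' the paper mentions. For $f_1=0$ this move:
\begin{itemize}
\item keeps $f_1=f_2=0$ and leaves $f_3$ fixed;
\item replaces $f_4$ by $f_4+\ell^4+\ell f_3$, which is harmless since $f_4$ is free in the family;
\item adds $\ell^6+\ell^2f_4+Cf_3+C^2$ to $f_6$.
\end{itemize}
With $f_4=\sum_i e_ix^{4-i}y^i$, the $x^2y^4$-coefficient now changes by $l_0^2l_1+l_0^2e_4+l_1^2e_2+c_2+c_2^2$. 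The part $l_0^2l_1+l_0^2e_4+l_1^2e_2$ alone takes every value in $\bF_4$: take $l_1=0$ if $e_4\neq 0$, and $l_1=1$ if $e_4=0$. Meanwhile $c_0$ and $c_1$ still kill $x^6$ and $x^3y^3$, which gives family (6).

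\textbf{Two smaller corrections.}
\begin{itemize}
\item Only three coefficients of $f_6$ can be killed, not four. The map $C\mapsto Cf_3+C^2$ is merely additive with kernel $\{0,f_3\}$, so its image has $\bF_2$-dimension $7$.
\item Family (2) needs no derivation at all. By the separability in Proposition~\ref{Proposition double cover}, every smooth member has $(f_1,f_3)\neq(0,0)$, so family (2) contains no del Pezzo surface. Your remark that it ``arises when $f_3$ is absorbed'' contradicts your own correct exclusion of $f_3=0$. That same separability statement is also the clean justification for your claim that $f_1=f_3=0$ forces a singular surface.
\end{itemize}
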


\end{document}